\numberwithin{equation}{section}
\theoremstyle{plain}
\newtheorem{thm}{Theorem}[section]
\newtheorem{dfn}[thm]{Definition}
\newtheorem{prop}[thm]{Proposition}
\newtheorem{cor}[thm]{Corollary}
\newtheorem{lem}[thm]{Lemma}
\newtheorem*{thm*}{Theorem}
\theoremstyle{definition}
\newtheorem{conj}[thm]{Conjecture}
\newtheorem{rem}[thm]{Remark}
\newtheorem{exa}[thm]{Example}
\newcommand{\nc}{\newcommand}
\nc{\Prop}{\begin{prop}}
\nc{\enprop}{\end{prop}}
\def\dim{\mathop{\mathrm{dim}}\nolimits}
\def\mod{\mathop{\mathrm{mod}}\nolimits}
\def\Hom{\mathop{\mathrm{Hom}}\nolimits}
\def\Ad{\mathop{\mathrm{Ad}}\nolimits}
\def\ord{\mathop{\mathrm{ord}}\nolimits}
\def\id{\mathop{\mathrm{id}}\nolimits}
\newcommand{\mf}[1]{{\mathfrak{#1}}}
\newcommand{\mb}[1][m]{{\mathfrak{#1}}}
\newcommand{\lr}[2]{{\langle #1,#2 \rangle}}
\nc{\on}{\operatorname}
\newcommand{\C}{{\mathbb C}}
\newcommand{\Q}{\mathbb {Q}}
\newcommand{\Z}{{\mathbb Z}}
\newcommand{\B}{{\mathcal{B}}}
\newcommand{\gl}{{\mathfrak{gl}}}
\newcommand{\R}{{\mathbb{R}}}
\newcommand{\seteq}{\mathbin{:=}}
\newcommand{\m}{\mathfrak{m}}
\newcommand{\Lemma}{\begin{lem}}
\newcommand{\enlemma}{\end{lem}}
\newcommand{\g}{{\mathfrak{g}}}
\newcommand{\isoto}[1][]%
{{\mathop{\buildrel{\sim}\over\longrightarrow}\limits_{#1}}}
\renewcommand{\hom}{\operatorname{\it \mathscr{H}\kern-.25em om}}
\newcommand{\M}{{\mathcal{M}}}
\newcommand{\eq}{\begin{eqnarray}}
\newcommand{\eneq}{\end{eqnarray}}
\newcommand{\eqn}{\begin{eqnarray*}}
\newcommand{\eneqn}{\end{eqnarray*}}
\newcommand{\QED}{\end{proof}}
\newcommand{\Proof}{\begin{proof}}
\newcommand{\soplus}{\mathop{\mbox{\normalsize$\bigoplus$}}\limits}
\newcommand{\cl}{\colon}
\newcommand{\ba}{\begin{array}}
\newcommand{\ea}{\end{array}}
\newcommand{\epi}{\twoheadrightarrow}
\newcommand{\set}[2]{\left\{#1 \mid #2 \right\}}
\newcommand{\hs}{\hspace*}
\newcommand{\eqsub}{\begin{subequations}\begin{eqnarray}}
\newcommand{\eneqsub}{\end{eqnarray}\end{subequations}}
\newcommand{\ol}{\overline}
\newcommand{\A}{\mathbf{A}}
\renewcommand{\le}{\leqslant}
\renewcommand{\ge}{\geqslant}
\nc{\la}{\lambda}
\nc{\lam}{\lambda}
\nc{\U}[1][\g]{U_q(#1)}
\nc{\te}{\tilde{e}}
\nc{\tei}{\tilde{e}_i}
\nc{\tf}{\tilde{f}}
\nc{\tfi}{\tilde{f}_i}
\nc{\tU}{\widetilde U_q(\g)}
\nc{\tE}{\widetilde{E}}
\nc{\tF}{\widetilde{F}}
\nc{\BZ}{{\mathbb{Z}}}
\nc{\al}{\alpha}
\nc{\qs}{{q}}
\nc{\lan}{\langle}
\nc{\ran}{\rangle}
\nc{\re}{{\mathrm{re}}}
\nc{\wt}{\operatorname{wt}}
\nc{\Uf}[1][\g]{U^-_q(#1)}
\nc{\Ue}{U^+_q(\g)}
\nc{\eps}{\varepsilon}
\nc{\vphi}{\varphi}
\nc{\sphi}{\varphi^*}
\nc{\seps}{\varepsilon^*}
\nc{\nn}{\nonumber}
\def\max{{\mathop{\mathrm{max}}}}
\nc{\vp}{\varpi}
\nc{\cls}{{\operatorname{cl}}}
\nc{\Wt}{{\operatorname{Wt}}}
\nc{\Us}{U'_q(\g)}
\nc{\La}{\Lambda}
\nc{\ro}{{\rm(}}
\nc{\rf}{{\rm)}}
\nc{\norm}{{\mathrm{norm}}}
\nc{\qbox}{\quad\mbox}
\nc{\braid}{{\mathfrak{B}}}
\nc{\dt}[1]{\tilde{\tilde #1}}
\nc{\Sn}{S^{{\mathrm{norm}}}}
\nc{\aff}{{\mathrm{aff}}}
\nc{\rk}{{\mathrm{rk}}}
\nc{\tQ}{\widetilde{Q}}
\nc{\tP}{\widetilde{P}_\theta}
\nc{\tW}{\widetilde{W}}
\nc{\Dyn}{\mathrm{Dyn}}
\nc{\tD}{\widetilde{\Delta}}
\nc{\height}{{\operatorname{ht}}}
\nc{\bl}{\bigl}
\nc{\br}{\bigr}
\nc{\Hecke}{\mathrm{H}}
\nc{\HA}{\Hecke^{\mathrm{A}}}
\nc{\HB}{\Hecke^{\mathrm{B}}}
\nc{\K}{\mathbf{K}}
\newcommand{\scbul}{{\,\raise1pt\hbox{$\scriptscriptstyle\bullet$}\,}}
\nc{\vac}{{\phi}}
\nc{\Bt}[1][\g]{\B_\theta(#1)}
\nc{\Btg}{\Bt[\gl_\infty]}
\nc{\be}{\begin{enumerate}}
\nc{\ee}{\end{enumerate}}
\nc{\low}{{\mathrm{low}}}
\nc{\upper}{{\mathrm{up}}}
\nc{\lw}{{\mathrm{low}}}
\nc{\Zodd}{\Z_{\mathrm{odd}}}
\nc{\Ft}[1][n]{\mathbb{P}\mathrm{ol}_{#1}}
\nc{\Ftf}[1][n]{\widetilde{\mathbb{P}\mathrm{ol}}_{#1}}
\nc{\KA}{\on{K}^{\mathrm{A}}}
\nc{\KB}{\on{K}^{\mathrm{B}}}
\nc{\Fc}[1][{n,m}]{\mathbf{F}_{#1}}
\nc{\tphi}{\tilde{\varphi}}
\nc{\CO}{\mathscr{O}}
\nc{\pbw}[1]{\lan #1 \ran}
\nc{\dv}[1]{{[#1]}}
\nc{\Mt}{{\M}_\theta}
\nc{\tVt}{\widetilde{V}_\theta(0)}
\nc{\Vt}[1][0]{V_\theta(#1)}
\nc{\Lt}[1][0]{L_\theta(#1)}
\nc{\tvac}{\widetilde{\vac}}
\nc{\ssum}{\mathop{\mbox{\normalsize{${\sum}$}}}\limits}
\nc{\bnum}{\be[{\rm(i)}]}
\nc{\enum}{\ee}
\nc{\Pt}{P_\theta}
\nc{\suml}{\sum\limits}
\nc{\tL}{\widetilde L}
\nc{\ltcr}{\underset{\mathrm{cry}}{<}}
\nc{\lecr}{\underset{\mathrm{cry}}{\le}}
\nc{\tp}{\tilde{\rho}}
\nc{\dpo}{{(\mspace{-3mu}(}}
\nc{\dpf}{{)\mspace{-3mu})}}
\nc{\odd}{\mathrm{odd}}
\nc{\CB}{\mathrm{B}}
\nc{\Bz}{\CB_\theta(0)}
\nc{\Bs}{\CB_\theta(\la)}
\nc{\Gl}{\mathrm{G}}
\nc{\Gth}{\Gl_\theta}
\begin{document}
\title{Symmetric Crystals for $\gl_\infty$}
\author{Naoya Enomoto}
\address{Research Institute for Mathematical Sciences\\
Kyoto University\\
Kyoto 606--8502, Japan
}
\email{henon@kurims.kyoto-u.ac.jp}

\author{Masaki Kashiwara}
\email{masaki@kurims.kyoto-u.ac.jp}
\thanks{The second author is partially supported by 
Grant-in-Aid for Scientific Research (B) 18340007,
Japan Society for the Promotion of Science.}

\date{\today}
\keywords{Crystal bases, affine Hecke algebras, LLT conjecture}
\subjclass{Primary:17B37; Secondary:20C08}

\begin{abstract}
In the preceding paper,
we formulated a conjecture 
on the relations between certain classes of irreducible representations
of affine Hecke algebras of type B
and symmetric crystals for $\gl_\infty$.
In the present paper, we prove the existence of the
symmetric crystal
and the global basis for $\gl_\infty$.
\end{abstract}


\maketitle
\tableofcontents
\section{Introduction}
Lascoux-Leclerc-Thibon (\cite{LLT})
conjectured the relations between the representations of 
Hecke algebras of {\em type A} and the crystal bases of the affine Lie algebras
of type A. Then, S.~Ariki (\cite{A}) observed that it should be understood 
in the setting of affine Hecke algebras
and proved the LLT conjecture in a more general framework.
Recently, we
presented the notion of symmetric crystals and
conjectured that 
certain classes of irreducible representations 
of the affine Hecke algebras of {\em type B} are described by
symmetric crystals for $\gl_\infty$ (\cite{EK}).

The purpose of the present paper is to prove
the existence of symmetric crystals in the case of $\gl_\infty$.

Let us recall the Lascoux-Leclerc-Thibon-Ariki theory.
Let $\HA_n$ be the affine Hecke algebra of type A of degree $n$.
Let $\KA_n$ be the Grothendieck group of
the abelian category of finite-dimensional $\HA_n$-modules,
and $\KA=\oplus_{n\ge0}\KA_n$.
Then it has a structure of Hopf algebra by the restriction and
the induction.
The set $I=\C^*$ may be regarded as a Dynkin diagram
with $I$ as the set of vertices and with edges between $a\in I$ and $ap_1^2$.
Here $p_1$ is the parameter of 
the affine Hecke algebra usually denoted by $q$.
Let $\g_I$ be the associated Lie algebra, and
$\g_I^-$ the unipotent Lie subalgebra. Let $U_I$ be the group associated
to $\g_I^-$.
Hence $\g_I$ is isomorphic to a direct sum of copies of $A^{(1)}_\ell$
if $p_1^2$ is a primitive $\ell$-th root of unity
and to a direct sum of copies of $\gl_\infty$ if $p_1$ has an infinite order.
Then $\C\otimes \KA$ is isomorphic to the algebra $\CO(U_I)$
of regular functions on $U_I$.
Let $\U[\g_I]$ be the associated quantized enveloping algebra.
Then $\Uf[\g_I]$ has an upper global basis 
$\{\Gl^{\upper}(b)\}_{b\in \CB(\infty)}$.
By specializing $\soplus \C[q,q^{-1}]\Gl^\upper(b)$
at $q=1$, we obtain $\CO(U_I)$.
Then the LLTA-theory says that the
elements associated to irreducible $\HA$-modules
corresponds to the image of the upper global basis.

In \cite{EK},
we gave analogous conjectures for affine Hecke algebras of type B.
In the type B case, we have to replace 
$\Uf[\g_I]$ and its upper global basis with
symmetric crystals (see \S\;\ref{subsec:symcry}).
It is roughly stated as follows.
Let $\HB_n$ be the affine Hecke algebra of type B of degree $n$.
Let $\KB_n$ be the Grothendieck group of
the abelian category of finite-dimensional modules over $\HB_n$,
and $\KB=\oplus_{n\ge0}\KB_n$.
Then $\KB$ has a structure of a Hopf bimodule over $\KA$.
The group $U_I$ has the anti-involution $\theta$ induced by
the involution $a\mapsto a^{-1}$ of $I=\C^*$. Let $U_I^\theta$
be the $\theta$-fixed point set of $U_I$.
Then $\CO(U_I^\theta)$ is a quotient ring of $\CO(U_I)$.
The action of $\CO(U_I)\simeq\C\otimes\KA$ on $\C\otimes\KB$,
in fact, descends to the action of $\CO(U_I^\theta)$.

We introduce 
$V_\theta(\la)$ (see \S\;\ref{subsec:symcry}),
a kind of the $q$-analogue of $\CO(U_I^\theta)$.
The conjecture in \cite{EK} is then:
\bnum
\item
$V_\theta(\la)$ has a crystal basis and a global basis.
\item
$\KB$ is isomorphic to a specialization of $V_\theta(\la)$ at $q=1$
as an $\CO(U_I)$-module, and
the irreducible representations correspond
to the upper global basis of $V_\theta(\la)$ at $q=1$.
\enum

\noindent
{\bf Remark.}\quad
In \cite{KM}, Miemietz and the second author gave an analogous conjecture
for the affine Hecke algebras of type D.

\medskip
In the present paper, we prove that
$V_\theta(\la)$ has a crystal basis and a global basis for
$\g=\gl_\infty$ and $\la=0$.

More precisely, let $I=\Z_{\odd}$ be the set of odd integers.
Let $\al_i$ ($i\in I$) be the simple roots with
$$(\al_i,\al_j)=\begin{cases}
2&\text{if $i=j$,}\\
-1&\text{if $i=j\pm2$,}\\
0&\text{otherwise.}
\end{cases}$$
Let $\theta$ be the involution of $I$ given by $\theta(i)=-i$.
Let $\Bt[\gl_\infty]$ be the algebra over $\K\seteq\Q(q)$
generated by $E_i$, $F_i$, and
invertible elements $T_i$ \ro$i\in I$\rf\ 
satisfying the following defining relations:
\begin{enumerate}[{\rm(i)}]
\item the $T_i$'s commute with each other,
\item
$T_{\theta(i)}=T_i$ for any $i$,
\item
$T_iE_jT_i^{-1}=q^{(\al_i+\al_{\theta(i)},\al_j)}E_j$ and
$T_iF_jT_i^{-1}=q^{(\al_i+\al_{\theta(i)},-\al_j)}F_j$
for $i,j\in I$,
\item
$E_iF_j=q^{-(\al_i,\al_j)}F_jE_i+
(\delta_{i,j}+\delta_{\theta(i),j}T_i)$
for $i,j\in I$,
\item
the $E_i$'s and the $F_i$'s satisfy the Serre relations
(see Definition~\ref{U_q(g)} (4)).
\end{enumerate}
Then there exists a unique irreducible $\Bt[\gl_\infty]$-module
$\Vt[0]$ with a generator $\vac$ satisfying
$E_i\vac=0$ and $T_i\vac=\vac$ (Proposition~\ref{prop:Vtheta}).
We define the endomorphisms
$\tE_i$ and $\tF_i$ of $\Vt$ by
\[
\tE_ia=\sum_{n \ge 1}F_i^{(n-1)}a_n, \quad \tF_ia=\sum_{n \ge 0}f_i^{(n+1)}a_n,
\] 
when writing 
\[
a=\sum_{n \ge 0}F_i^{(n)}a_n \quad \text{with} \ E_ia_n=0.
\]
Here $F_i^{(n)}=F_i^n/[n]!$ is the divided power.
Let $\A_0$ be the ring of functions $a\in \K$ which
do not have a pole at $q=0$.
Let $\Lt$ be the $\A_0$-submodule of $\Vt$
generated by the elements $\tF_{i_1}\cdots\tF_{i_\ell}\vac$
($\ell\ge0$, $i_1,\ldots,i_\ell\in I$).
Let $\Bz$ be the subset of $\Lt/q\Lt$ consisting of 
the $\tF_{i_1}\cdots\tF_{i_\ell}\vac$'s.
In this paper, we prove the following theorem.
\begin{thm*}[Theorem~\ref{main:cr}]
\bnum
\item
$\tF_iL_\theta(0)\subset L_\theta(0)$
and $\tE_iL_\theta(0)\subset L_\theta(0)$,
\item
$\Bz$ is a basis of $L_\theta(0)/\qs L_\theta(0)$,
\item
$\tF_i\Bz\subset\Bz$,
and
$\tE_i\Bz\subset \Bz\sqcup\{0\}$,
\item
$\tF_i\tE_i(b)=b$ for any $b\in \Bz$ such that $\tE_ib\not=0$,
and $\tE_i\tF_i(b)=b$ for any $b\in  \Bz$.
\enum
\end{thm*}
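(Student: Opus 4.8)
The plan is to run Kashiwara's \emph{grand loop} argument in the symmetric setting: statements (i)--(iv) together say exactly that $(\Lt,\Bz)$ is a crystal basis of $\Vt$ (the module of Proposition~\ref{prop:Vtheta}), and they will be proved by one simultaneous induction on degree. Grade $\Vt=\bigoplus_{\ell\ge0}\Vt_\ell$ by the number of $F$'s and write $\Lt_\ell=\Lt\cap\Vt_\ell$, with $\Bz_\ell$ the degree-$\ell$ part of $\Bz$. For each $\ell$ one carries a bundle of mutually reinforcing assertions that strengthen (i)--(iv): (a)~$\tE_i\Lt_{\le\ell}\subset\Lt$ (the inclusion $\tF_i\Lt\subset\Lt$ is automatic, $\Lt$ being the $\A_0$-span of the monomials $\tF_{i_1}\cdots\tF_{i_\ell}\vac$); (b)~$\Bz_{\le\ell}$ generates $\Lt_{\le\ell}/q\Lt_{\le\ell}$; (c)~for the symmetric bilinear form $\langle\ ,\ \rangle$ on $\Vt$ determined by $\langle\vac,\vac\rangle=1$ and the anti-involution $E_i\leftrightarrow F_i$, $T_i\mapsto T_i$ of $\Bt[\gl_\infty]$, one has $\langle\Lt_{\le\ell},\Lt_{\le\ell}\rangle\subset\A_0$ and $\langle b,b'\rangle\equiv\delta_{b,b'}\pmod{q\A_0}$ for $b,b'\in\Bz_{\le\ell}$, whence $\Bz_{\le\ell}$ is linearly independent and a basis of $\Lt_{\le\ell}/q\Lt_{\le\ell}$; (d)~$\tF_i\Bz_{\le\ell-1}\subset\Bz$, $\tE_i\Bz_{\le\ell}\subset\Bz\sqcup\{0\}$, and $\tE_i\tF_ib=b$ for $b\in\Bz_{\le\ell}$, $\tF_i\tE_ib=b$ for $b\in\Bz_{\le\ell}$ with $\tE_ib\neq0$. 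The base case $\ell=0$ is immediate from $E_i\vac=0$ and $T_i\vac=\vac$.

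\textbf{Rank-one input.} Since $i\neq\theta(i)=-i$ for all $i\in I=\Zodd$, relation (iv) gives $E_iF_i=q^{-2}F_iE_i+1$; together with the $T$-weight grading of $\Vt$ this makes each weight space a $U_q(\mathfrak{sl}_2)$-module for which the string decomposition $a=\sum_nF_i^{(n)}a_n$, $E_ia_n=0$ (the one defining $\tE_i,\tF_i$), is the ordinary one. Hence the ``same-colour'' pieces of the bundle --- in particular the identities $\tE_i\tF_ib=b$, $\tF_i\tE_ib=b$ of (d)/(iv) for the fixed $i$ and the behaviour of $\tE_i,\tF_i$ inside a single $i$-string --- follow formally from $U_q(\mathfrak{sl}_2)$-theory and the lower-degree part of the induction. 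The real content of the inductive step is the stability (a): one must commute $\tE_i$ past $\tF_j$ modulo $q\Lt$ for every colour $j\neq i$.

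\textbf{Rank-two analysis (the core).} That commutation is controlled by the subalgebra generated by $E_i,F_i,E_j,F_j,T_i,T_j$ acting on the sub-object of $\Vt$ it generates from $\vac$: one checks that in each degree $\tF_i\tF_jb$ and $\tF_j\tF_ib$ (for $b\in\Bz$ of smaller degree) lie in $\Lt$, are congruent mod $q\Lt$ to members of $\Bz$, and that the two-coloured graph so produced is the expected rank-two symmetric crystal. The cases are: $j=i$ (rank one, above); $j\neq i,\theta(i)$, where $E_iF_j=q^{-(\al_i,\al_j)}F_jE_i$ and the pair behaves as in the ordinary rank-two theory --- type $A_1\times A_1$ if $(\al_i,\al_j)=0$ and type $A_2$ if $j=i\pm2$; and $j=\theta(i)$, where (iv) reads $E_iF_{\theta(i)}=q^{-(\al_i,\al_{\theta(i)})}F_{\theta(i)}E_i+T_i$, the extra term $T_i$ making the rank-two object a genuinely new \emph{symmetric} crystal, with a further split according to whether $\theta(i)$ is non-adjacent to $i$ (generic: $(\al_i,\al_{\theta(i)})=0$) or adjacent to it (the exceptions $i=\pm1$: $(\al_i,\al_{\theta(i)})=-1$). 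Each case is a finite computation once the relations are inserted, and feeding its outcome back into (c) closes the loop in degree $\ell$. Because any vector of $\Vt$ involves only finitely many colours, passing from finite $\theta$-stable sub-diagrams to $\gl_\infty$ is harmless, and may be arranged as a direct limit.

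\textbf{Main obstacle.} The crux is the case $j=\theta(i)$, including its degenerations at $i=\pm1$: there $\Vt$ is \emph{not} governed by ordinary rank-two quantum-group representation theory --- the operator $T_i$ in (iv) prevents one from simply quoting the known $A_1\times A_1$ or $A_2$ crystals --- so this rank-two symmetric crystal has to be constructed from scratch, meaning its crystal lattice, the explicit action of the Kashiwara operators on it, and the identities $\tE_i\tF_i=\id$, $\tF_i\tE_i=\id$ on the relevant vectors. The secondary difficulty is the bookkeeping intrinsic to any grand loop: ordering the implications so that everything used in degree $\ell$ is genuinely established in degree $<\ell$ (or is a rank-$\le2$ fact already in hand), and propagating the near-orthonormality $\langle b,b'\rangle\equiv\delta_{b,b'}\pmod{q\A_0}$ --- which is what turns ``$\Bz$ spans'' into ``$\Bz$ is a basis'' in (ii) --- through the induction. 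Once the loop closes, (i)--(iv) are the degree-$\ell$ conclusions assembled over all $\ell$.
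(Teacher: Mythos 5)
Your plan is a grand-loop argument in the style of Kashiwara's original construction of $\CB(\infty)$: simultaneous induction on degree over a bundle of mutually reinforcing assertions, with the burden pushed onto rank-one and rank-two analysis. The paper does something entirely different and non-inductive. It constructs an explicit PBW-type basis $\{\Pt(\mb)\vac\}_{\mb\in\Mt}$ of $\Vt$ indexed by $\theta$-restricted multisegments, computes the matrix entries of every $E_{\pm k}$ and $F_{\pm k}$ on this basis in closed form (Theorems~\ref{th:F-k}, \ref{th:E-k}, \ref{th:EF}), independently writes down the predicted crystal structure on $\Mt$ combinatorially (Definition~\ref{def:crMt}), and then invokes a general order-estimate criterion for crystal bases (Theorem~\ref{th:crcr}) to conclude that the PBW basis is a crystal basis with the predicted operators. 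Statements (i)--(iv) then drop out all at once, with no rank-two reduction and no induction loop.

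As written, your proposal has a genuine gap exactly at what you call the ``main obstacle.'' The rank-two analysis for the pair $(i,\theta(i))$ --- including the degenerate cases $i=\pm1$, where $\theta(i)$ is adjacent to $i$ --- is the whole novelty of symmetric crystals: the extra $T_i$ in $E_iF_{\theta(i)}=q^{-(\al_i,\al_{\theta(i)})}F_{\theta(i)}E_i+T_i$ means this local model is governed by none of the classical rank-two crystals, so there is nothing to quote. A crystal lattice, the action of the Kashiwara operators on it, and the identities $\tE_i\tF_i=\id$ and $\tF_i\tE_i=\id$ there must be built from scratch; you flag this but do not do it, and without it the grand loop cannot even begin. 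There is also a structural worry about the reduction itself: since $F_i\vac=F_{\theta(i)}\vac$ in $\Vt$ and $T_k$ carries $\theta$-weight $\al_k+\al_{\theta(k)}$, any ``two-colour'' sub-object of $\Vt$ automatically drags in $\theta(i)$ and $\theta(j)$, so the local models you need are really $\theta$-orbits of pairs (up to four colours at once), and you would have to check that the $\theta$-stable sub-diagrams close up correctly before inducting --- a complication the paper sidesteps entirely by working with all of $\gl_\infty$ in one computation.
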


Let $-$ be the bar operator of $\Vt$.
Namely, $-$ is a unique endomorphism of $\Vt$ such that
$\ol{\vac}=\vac$, $\ol{av}=\bar{a}\bar{v}$ and $\ol{F_iv}=F_i\bar{v}$ for
$a\in \K$ and $v\in \Vt$. Here $\bar{a}(q)=a(q^{-1})$.

Then we prove the existence of global basis:
\begin{thm*}[Theorem~\ref{th:gl}]
Let $\Vt_\A$ be the smallest submodule of $\Vt$ over 
$\A\seteq\Q[q,q^{-1}]$ such that it contains $\vac$ and is stable
by the $F_i^{(n)}$'s.
\bnum
\item
For any $b\in \Bz$, there exists a unique
$\Gth^\lw(b)\in\Vt_\A\cap\Lt$ such that
$\ol{\Gth^\lw(b)}=\Gth^\lw(b)$ and $b=\Gth^\lw(b)\ \mod q \Lt$,
\item
$\Lt=\soplus_{b\in \Bz}\A_0\Gth^\lw(b)$,
$\Vt_\A=\soplus_{b\in \Bz}\A \Gth^\lw(b)$ and
$\Vt=\soplus_{b\in \Bz}\K \Gth^\lw(b)$.
\enum
\end{thm*}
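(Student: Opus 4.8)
\medskip
\noindent\textbf{Proof proposal.}\quad
The plan is to deduce Theorem~\ref{th:gl} from Theorem~\ref{main:cr} by the standard mechanism that upgrades a crystal basis to a lower global basis, as in Kashiwara's construction of the lower global basis of $\Uf[\g]$. Write $L\seteq\Lt$ and $\ol L\seteq\ol{\Lt}$; since $\ol{\A_0}$ is the ring of rational functions regular at $q=\infty$, $\ol L$ is a lattice over it. Note first that $\Vt_\A$ is stable under the bar operator, since it is generated over $\A$ by the $F_i^{(n)}\vac$, these are bar-invariant, and $\ol{\A}=\A$. The whole theorem then follows once we prove that the triple $(\Vt_\A,L,\ol L)$ is \emph{balanced}, i.e. that the $\Q$-linear map
\[
E\seteq\Vt_\A\cap\Lt\cap\ol{\Lt}\ \hookrightarrow\ \Lt\ \twoheadrightarrow\ \Lt/q\Lt
\]
is an isomorphism. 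Indeed, granting this, Kashiwara's lemma on global bases applies (its auxiliary spanning hypotheses being immediate here): taking for the distinguished $\Q$-basis of $\Lt/q\Lt$ the set $\Bz$ (legitimate by Theorem~\ref{main:cr}(ii)) and letting $\Gth^\lw(b)\in E$ be the preimage of $b$, one obtains that $\Gth^\lw(b)$ is the unique bar-invariant element of $\Vt_\A\cap\Lt$ congruent to $b$ modulo $q\Lt$, which is (i), and that $\{\Gth^\lw(b)\}_{b\in\Bz}$ is simultaneously an $\A_0$-basis of $\Lt$ (by a graded Nakayama argument, as it reduces modulo $q$ to a basis of $\Lt/q\Lt$), an $\A$-basis of $\Vt_\A$, and a $\K$-basis of $\Vt$, which is (ii).

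It remains to prove balancedness, and this is the substantive step; I would argue by induction on the weight, following Kashiwara's proof of balancedness for $\Uf[\g]$. The adaptation to $\Bt[\gl_\infty]$ is natural because, by Theorem~\ref{main:cr}(iii)--(iv), the crystal $\Bz$ has the same formal shape as $\CB(\infty)$ (each $\tF_i$ is injective on $\Bz$ and $\tE_i\tF_i=\id$ there). Fix $i\in I$ and put $N\seteq\set{a\in\Vt}{E_ia=0}$, so that $\bigcap_{j\in I}\set{a\in\Vt}{E_ja=0}=\K\vac$ by irreducibility of $\Vt$. The construction of $\tE_i,\tF_i$ rests on the $i$-string decomposition $\Vt=\soplus_{n\ge0}F_i^{(n)}N$, and one must first verify that this is compatible with the integral structures: $\Vt_\A=\soplus_{n\ge0}F_i^{(n)}(\Vt_\A\cap N)$ --- run the usual straightening of monomials in the $F_j^{(m)}$ using relation (iv) and the divided-power identities, keeping everything over $\A$ --- and $\Lt=\soplus_{n\ge0}F_i^{(n)}(\Lt\cap N)$, which is essentially a restatement of Theorem~\ref{main:cr} identifying the $F_i^{(n)}$-strings in $\Lt$ with the $\tF_i$-strings in $\Bz$. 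Granting these compatibilities, balancedness of $(\Vt_\A,\Lt,\ol{\Lt})$ in a fixed weight reduces, string by string, to balancedness of $(\Vt_\A\cap N,\,\Lt\cap N,\,\ol{\Lt}\cap\ol N)$: the terms with $n\ge1$ sit in strictly smaller weight, and the residual $n=0$ contribution, carried by $N$, is brought within the inductive hypothesis by a secondary reduction using some $E_j$ with $j\ne i$, the base case being $\vac$ itself (where the global basis element is just $\vac$).

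The step that requires genuine care --- and the main obstacle --- is the interaction of the bar operator with this $i$-string decomposition. Unlike for a quantum group, the bar on $\Vt$ is only a module operator: it does not come from any algebra involution of $\Bt[\gl_\infty]$ fixing the $E_i$ (relation (iv) is not symmetric under $q\mapsto q^{-1}$), so it does not preserve $N=\Ker E_i$. One must therefore establish, exactly as Kashiwara does in the quantum-group case, that for $u\in N$ the element $\ol{F_i^{(n)}u}=F_i^{(n)}\ol u$ agrees with an element of $F_i^{(n)}N$ modulo a combination of terms $F_i^{(m)}(\,\cdot\,)$ with $m<n$ and of terms of strictly higher order in $q$; this is precisely what makes the inductive ``straighten, then bar-correct'' procedure converge. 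The quantitative input is again relation (iv), the identities for the $F_i^{(n)}$, and the finiteness special to $\gl_\infty$: all weight spaces of $\Vt$ are finite-dimensional and meet only finitely many $i$-strings, since there are no imaginary roots. With this compatibility of the bar with the string decomposition in hand, the inductive step closes and balancedness --- hence Theorem~\ref{th:gl} --- follows.
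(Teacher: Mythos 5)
Your proposal takes a genuinely different route from the paper. You try to deduce the theorem by proving that $(\Lt,\ol{\Lt},\Vt_\A)$ is balanced, via a weight induction on $i$-string decompositions in the style of Kashiwara's original ``grand loop'' for $\Uf[\g]$. The paper instead works entirely with the explicit PBW basis $\{\Pt(\mb)\vac\}_{\mb\in\Mt}$. It first shows (Lemma in \S5.1) that $\Vt_\A=\soplus_{\mb\in\Mt}\A\,\Pt(\mb)\vac$, already knows from Theorem~\ref{main:cr} that $\Lt=\soplus_{\mb}\A_0\Pt(\mb)\vac$, and then proves (Proposition~\ref{barpp}) that the bar is unitriangular on this basis for the ordering $\lecr$: $\ol{\Pt(\mb)\vac}\in\Pt(\mb)\vac+\sum_{\mb'\ltcr\mb}\K\Pt(\mb')\vac$. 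Global bases then come out by the standard ``Gaussian elimination'' lemma on a unitriangular matrix $C$ with $\ol{C}C=\id$: one solves $\ol{A}C=A$ with $A$ unitriangular and off-diagonal entries in $q\Q[q]$, and sets $\Gth^\lw(\mb)=\sum a_{\mb,\mb'}\Pt(\mb')\vac$. Balancedness is a \emph{consequence} of this construction, not the starting point.

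There is, moreover, a genuine gap in your proposal exactly where you flag the obstacle. You need, for $u\in N=\Ker E_i$, that $\ol{F_i^{(n)}u}=F_i^{(n)}\ol u$ projects back onto the $n$-th string component up to lower strings and higher powers of $q$, and you assert this follows ``exactly as Kashiwara does in the quantum-group case.'' But the mechanism behind the analogous statement for $\Uf[\g]$ is a precise ring-theoretic identity relating $e_i'$, $e_i^*$ and the bar-\emph{automorphism} of the algebra (e.g.\ $\ol{e_i'a}$ is expressed through $e_i^*\ol a$), and none of that is available here: the bar on $\Vt$ is only a module operator built from the anti-involution $\xi(f_i)=f_{\theta(i)}$ and a weight factor $q^{-c(\mu)}$, and it does not conjugate $E_i$ (relation (iv) is not $q\leftrightarrow q^{-1}$ symmetric, as you note). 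Also, the claim that $\Lt=\soplus_n F_i^{(n)}(\Lt\cap N)$ is ``essentially a restatement'' of Theorem~\ref{main:cr} is optimistic: Theorem~\ref{main:cr} gives $\tE_i$-, $\tF_i$-stability and the behavior of $\Bz$, not a clean integral string decomposition. Closing either of these points would effectively require proving a triangularity statement for the bar anyway, which is what the paper does directly via Proposition~\ref{barpp}; so the more economical path really is the PBW one.
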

We call $ \Gth^\lw(b)$ the {\em lower global basis}.
The $\Bt[\gl_\infty]$-module $\Vt$ has a unique symmetric bilinear form
$(\scbul,\scbul)$ such that
$(\vac,\vac)=1$ and $E_i$ and $F_i$ are transpose to each other.
The dual basis to  $\{\Gth^\lw(b)\}_{b\in\Bz}$
with respect to $(\scbul,\scbul)$ 
is called an {\em upper global basis}.

\bigskip
Let us explain the strategy of our proof of these theorems.
We first construct a PBW type basis $\{\Pt(\mb)\vac\}_{\mb}$ of $\Vt$ 
parametrized by the $\theta$-restricted multisegments $\mb$.
Then, we explicitly calculate the actions of $E_i$ and $F_i$
in terms of the PBW basis $\{\Pt(\mb)\vac\}_{\mb}$.
Then, we prove that the PBW basis gives a crystal basis by the
estimation of the coefficients of these actions.
For this we use a criterion for crystal bases
(Theorem~\ref{th:crcr}).

\section{General definitions and conjectures}
\subsection{Quantized universal enveloping algebras and its reduced $q$-analogues}
We shall recall the quantized universal enveloping algebra
$U_q(\g)$.
Let $I$ be an index set (for simple roots),
and $Q$ the free $\Z$-module with a basis $\{\al_i\}_{i\in I}$.
Let $(\scbul,\scbul)\cl Q\times Q\to\Z$ be
a symmetric bilinear form such that
$(\al_i,\al_i)/2\in\Z_{>0}$ for any $i$ and
$(\al_i^\vee,\al_j)\in\Z_{\le0}$ for $i\not=j$ where
$\al_i^\vee\seteq2\al_i/(\al_i,\al_i)$.
Let $q$ be an indeterminate and set 
$\K\seteq\Q(\qs)$.
We define its subrings $\A_0$, $\A_\infty$ and $\A$ as follows.
\begin{eqnarray*}
\A_0&=&\set{f\in\K}{\text{$f$ is regular at $q=0$}},\\
\A_\infty&=&\set{f\in\K}%
{\text{$f$ is regular at $q=\infty$}},\\
\A&=&\Q[\qs,\qs^{-1}].
\end{eqnarray*}
\begin{dfn}\label{U_q(g)}
The quantized universal enveloping algebra $U_q(\g)$ is the $\K$-algebra
generated by elements $e_i,f_i$ and invertible
elements $t_i\ (i\in I)$
with the following defining relations.
\begin{enumerate}[{\rm(1)}]

\item The $t_i$'s commute with each other.

\item
$t_je_i\,t_j^{-1}=q^{(\al_j,\al_i)}\,e_i\ $
and $\ t_jf_it_j^{-1}=q^{-(\al_j,\al_i)}f_i\ $
for any $i,j\in I$.

\item\label{even} $\lbrack e_i,f_j\rbrack
=\delta_{ij}\dfrac{t_i-t_i^{-1}}{q_i-q_i^{-1}}$
for $i$, $j\in I$. Here $q_i\seteq q^{(\al_i,\al_i)/2}$.

\item {\rm(}{\em Serre relation}{\rm)} For $i\not= j$,
\begin{eqnarray*}
\sum^b_{k=0}(-1)^ke^{(k)}_ie_je^{(b-k)}_i=0, \ 
\sum^b_{k=0}(-1)^kf^{(k)}_i
f_jf_i^{(b-k)}=0.
\end{eqnarray*}
Here $b=1-(\al_i^\vee,\al_j)$ and
\begin{eqnarray*}
\ba{l}
e^{(k)}_i=e^k_i/\lbrack k\rbrack_i!\,,\; f^{(k)}_i=f^k_i/\lbrack k
\rbrack_i!\ , \ \lbrack k\rbrack_i=(q^k_i-q^{-k}_i)/(q_i-q^{-1}_i)\,, \ 
\lbrack k\rbrack_i!=\lbrack 1\rbrack_i\cdots \lbrack k\rbrack_i\,.
\ea
\end{eqnarray*}
\end{enumerate}
\end{dfn}
Let us denote by $\Uf$ (resp.\ $\Ue$)
the $\K$-subalgebra of $\U$ generated by the $f_i$'s (resp.\ the $e_i$'s).

Let $e'_i$ and $e^*_i$ be the operators on $\Uf$ defined by
$$[e_i,a]=\dfrac{(e^*_ia)t_i-t_i^{-1}e'_ia}{q_i-q_i^{-1}}\quad(a\in\Uf).$$
These operators satisfy the following formulas similar to derivations:
\eq
&&\ba{l}
e_i'(ab)=e_i'(a)b+(\Ad(t_i)a)e_i'b,\quad\\[2ex]
e_i^*(ab)=ae_i^*b+(e_i^*a)(\Ad(t_i)b).
\ea\label{eq:der}
\eneq
The algebra $\Uf$ has a unique symmetric bilinear form $(\scbul,\scbul)$
such that $(1,1)=1$ and
\[
(e'_ia,b)=(a,f_ib)\quad\text{for any $a,b\in\Uf$.}
\]
It is non-degenerate and satisfies $(e^*_ia,b)=(a,bf_i)$.
The left multiplication of $f_j,e'_i$ and $e_i^*$ have the commutation relations
\[
e'_if_j=q^{-(\al_i,\al_j)}f_je'_i+\delta_{ij}, \ 
e_i^*f_j=f_je_i^*+\delta_{ij}\Ad(t_i),
\]
and both the $e_i'$'s and the $e^*_i$'s satisfy the Serre relations.

\begin{dfn}
The reduced $q$-analogue $\B(\mf{g})$ of $\mf{g}$ is the 
$\K$-algebra generated by $e_i'$ and $f_i$. 
\end{dfn}

\subsection{Review on crystal bases and global bases}
Since $e_i'$ and $f_i$ satisfy the $q$-boson relation, 
any element $a \in U_q^{-}(\mf{g})$ can be uniquely written as
\[
a=\sum_{n \ge 0}f_i^{(n)}a_n \quad \text{with} \ e_i'a_n=0.
\]
Here $f_i^{(n)}=\dfrac{f_i^n}{[n]_i!}$. 
\begin{dfn}
We define the modified root operators $\widetilde{e}_i$ and $\widetilde{f}_i$ on $U_q^{-}(\mf{g})$ by 
\[
\widetilde{e}_ia=\sum_{n \ge 1}f_i^{(n-1)}a_n, \quad \widetilde{f}_ia=\sum_{n \ge 0}f_i^{(n+1)}a_n.
\] 
\end{dfn}
\begin{thm}[\cite{K}]
We define
\begin{eqnarray*}
L(\infty)&=&\sum_{\ell \ge 0,\,i_1, \ldots ,i_{\ell} \in I}
\A_0\tf_{i_1} \cdots \tf_{i_\ell} \cdot 1
 \subset U_q^{-}(\mf{g}), \\
\CB(\infty)&=&\set{\tf_{i_1} \cdots \tf_{i_\ell} \cdot 1\; \mod qL(\infty)}
{\ell \ge 0,i_1, \cdots ,i_{\ell} \in I} \subset L(\infty)/qL(\infty).
\end{eqnarray*}
Then we have
\bnum
\item $\widetilde{e}_iL(\infty) \subset L(\infty)$ 
and $\widetilde{f}_{i}L(\infty) \subset L(\infty)$, 
\item $\CB(\infty)$ is a basis of $L(\infty)/qL(\infty)$,
\item
$\widetilde{f}_i\CB(\infty) \subset \CB(\infty)$ 
and $\widetilde{e}_i\CB(\infty) \subset \CB(\infty) \cup \{0\}$. 
\enum
We call $(L(\infty),\CB(\infty))$ the {\em crystal basis} of $U_q^{-}(\mf{g})$.
\end{thm}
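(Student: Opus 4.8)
The statement is Kashiwara's existence theorem for the crystal basis of $U_q^-(\g)$, and the plan is to reproduce the \emph{grand loop} induction of \cite{K}. All the ingredients have been recorded above: the $q$-boson relations $e'_if_j = q^{-(\al_i,\al_j)}f_je'_i + \delta_{ij}$, the Serre relations for the $e'_i$'s and for the $f_i$'s, the unique decomposition $a=\sum_{n\ge0}f_i^{(n)}a_n$ with $e'_ia_n=0$, and the Kashiwara bilinear form $(\scbul,\scbul)$ on $U_q^-(\g)$ pinned down by $(1,1)=1$ and $(e'_ia,b)=(a,f_ib)$. One also carries along the anti-automorphism $*$ fixing the generators and the resulting right-hand operators $e_i^*$ and the starred root operators $\tilde e_i^*,\tilde f_i^*$ (the $*$-conjugates of $e'_i,\tilde e_i,\tilde f_i$); together with the form, these are the tools that make linear independence tractable, and in \cite{K} the integrable highest weight modules $V(\la)$ are threaded through the same argument as well. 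First I would dispose of the rank-one case: for $\g=\mathfrak{sl}_2$ one has $U_q^-=\K[f]$ with $\tilde f f^{(n)}=f^{(n+1)}$ and $\tilde e f^{(n)}=f^{(n-1)}$, so $L(\infty)=\bigoplus_{n\ge0}\A_0 f^{(n)}$, $\CB(\infty)=\set{f^{(n)}\bmod qL(\infty)}{n\ge0}$, and (i)--(iii) are immediate. The whole content is the bootstrap from this to general symmetrizable Kac--Moody type, where the $\tilde e_i,\tilde f_i$ for distinct $i$ interact only through the Serre relations.

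To run the induction I would filter $L(\infty)$ by $L(\infty)_\ell=\sum_{k\le\ell}\A_0\tilde f_{i_1}\cdots\tilde f_{i_k}\cdot 1$, with the corresponding subset $\CB(\infty)_\ell\subset L(\infty)_\ell/qL(\infty)_\ell$ (equivalently grade everything by the height of the weight in the root lattice and induct on the height), and then prove simultaneously, for every $i$ and every level $\le\ell$, a closed package of mutually reinforcing assertions: (a) the rank-one decomposition $L(\infty)_\ell=\bigoplus_n\tilde f_i^{\,n}\bigl(L(\infty)_\ell\cap\Ker e'_i\bigr)$, whence $\tilde e_iL(\infty)_\ell\subset L(\infty)_\ell$, together with its $*$-analogue; (b) $(L(\infty)_\ell,L(\infty)_\ell)\subset\A_0$, so that the form descends modulo $q$; (c) the vectors $\tilde f_{i_1}\cdots\tilde f_{i_k}\cdot 1$ with $k\le\ell$ constitute an orthonormal basis of $L(\infty)_\ell/qL(\infty)_\ell$ modulo $q$, which gives (ii); (d) $\tilde f_i$ is injective on $\CB(\infty)_\ell$ with $\tilde e_i\tilde f_i=\id$ and $\tilde f_i\tilde e_ib=b$ whenever $\tilde e_ib\ne0$, which gives (iii); and (e) the near-commutations $\tilde e_i\tilde f_j\equiv\tilde f_j\tilde e_i\pmod{qL(\infty)}$ for $i\ne j$ and the compatibility of the left operators with the starred ones. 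The decisive structural point is that each assertion at level $\ell$ is to be deduced from all assertions at levels $<\ell$ together with the level-$\ell$ assertions already in hand, in a fixed order that removes all circularity; the interaction of $\tilde e_i$ with $\tilde f_j$ for $i,j$ adjacent in the Dynkin diagram is the only place where the Serre relations are genuinely invoked, and it is handled by the explicit rank-two ($A_1\times A_1$, $A_2$, $B_2$, $G_2$) computations.

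The main obstacle is exactly the closure of this loop. The assertions are mutually dependent, and the real work lies in organizing the package and the order of proof so that at each new height nothing is used before it has been established; the hardest single step is proving the orthonormal-basis statement (c) at a new height, since bare linear independence of the $\tilde f_{i_1}\cdots\tilde f_{i_\ell}\cdot 1$ is not visible by elementary arguments --- which is precisely why the bilinear form and the second, starred, family of root operators have to be carried through the whole induction. Once the loop closes at every height, (i)--(iii) follow by passing to the union over $\ell$, and one also reads off the supplementary facts that the present paper will need downstream: $L(\infty)=\set{x\in U_q^-(\g)}{(x,L(\infty))\subset\A_0}$, orthonormality of $\CB(\infty)$ for $(\scbul,\scbul)$, and commutation of $\tilde e_i,\tilde f_i$ with $\tilde e_j^*,\tilde f_j^*$. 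Finally, whenever a PBW basis is at hand --- as it will be for $\gl_\infty$ in the body of the paper, and more generally in finite or affine type --- one may sidestep the grand loop entirely by constructing $L(\infty)$ and $\CB(\infty)$ out of the PBW basis and verifying the crystal axioms directly; that is the route followed here for the symmetric-crystal analogue $\Vt$.
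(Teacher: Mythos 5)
The paper does not prove this theorem; it simply cites \cite{K}, so the relevant comparison is with Kashiwara's own argument. Your sketch is a faithful high-level summary of that grand-loop induction: the rank-one base case, the simultaneous induction on height through a closed package of assertions about the root operators, the bilinear form, the $*$-operators, and the careful ordering that removes circularity; you also correctly observe that the integrable highest weight modules $V(\la)$ are threaded through the same loop, that the form and the starred operators are precisely what make linear independence accessible, and that the present paper follows the PBW shortcut for the symmetric-crystal analogue rather than re-running the grand loop. One small caution: the list ``$A_1\times A_1$, $A_2$, $B_2$, $G_2$'' suggests a reduction to finite-type rank-two Dynkin diagrams, which is not available for general symmetrizable Kac--Moody data where $\langle h_i,\alpha_j\rangle$ can be arbitrarily negative; the rank-two lemmas in \cite{K} are stated and proved for arbitrary Cartan integers, so the phrase ``explicit rank-two computations'' should be read as referring to the interaction of a single pair $(e'_i,f_j)$ via the $q$-boson and Serre relations, not to a finite list of root systems. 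With that caveat, the proposal is correct and matches the cited proof.
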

Let $-$ be the automorphism of $\K$ sending $\qs$ to $\qs^{-1}$.
Then $\ol{\A_0}$ coincides with $\A_\infty$. 

Let $V$ be a vector space over $\K$,
$L_0$ an $\A_0$-submodule of $V$,
$L_\infty$ an $\A_\infty$- submodule, and
$V_\A$ an $\A$-submodule.
Set $E:=L_0\cap L_\infty\cap V_\A$.

\begin{dfn}[\cite{K}]
We say that $(L_0,L_\infty,V_\A)$ is {\em balanced}
if each of $L_0$, $L_\infty$ and $V_\A$
generates $V$ as a $\K$-vector space,
and if one of the following equivalent conditions is satisfied.
\bnum
\item
$E \to L_0/\qs L_0$ is an isomorphism,
\item
$E \to L_\infty/\qs^{-1}L_\infty$ is an isomorphism,
\item
$(L_0\cap V_\A)\oplus
(\qs^{-1} L_\infty \cap V_\A) \to V_\A$
      is an isomorphism,
\item
$\A_0\otimes_\Q E \to L_0$, $\A_\infty\otimes_\Q E \to L_\infty$,
        $\A\otimes_\Q E \to V_\A$ and $\K \otimes_\Q E \to V$
are isomorphisms.
\enum
\end{dfn}

Let $-$ be the ring automorphism of $\U$ sending
$\qs$, $t_i$, $e_i$, $f_i$ to $\qs^{-1}$, $t_i^{-1}$, $e_i$, $f_i$.

Let $\U_\A$ be the $\A$-subalgebra of
$\U$ generated by $e_i^{(n)}$, $f_i^{(n)}$
and $t_i$.
Similarly we define
$\Uf_\A$.

\begin{thm}
$(L(\infty),L(\infty)^-,\Uf_\A)$ is balanced.
\end{thm}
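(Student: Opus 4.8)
The theorem asserts that $(L(\infty),L(\infty)^-,\Uf_\A)$ is balanced, i.e. the global basis of $\Uf$ exists. The standard strategy, which I would follow, is to reduce the global statement to a finite‑rank situation and then to a statement about a single index $i$, proving the three objects fit together by an inductive argument on weight.

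\medskip
\textbf{Plan of proof.} First I would fix a weight $\beta\in Q_-$ and work inside the finite‑dimensional weight space $\Uf_\beta$, together with $L(\infty)_\beta$, $L(\infty)^-_\beta$ and $(\Uf_\A)_\beta$; since everything is graded it suffices to show each graded piece is balanced, and we may induct on $\height(\beta)$, the case $\beta=0$ being trivial. For the inductive step, fix $i\in I$ and use the $q$‑boson decomposition $\Uf=\bigoplus_{n\ge0}f_i^{(n)}\Ker e_i'$, which is compatible with the crystal lattice: $L(\infty)=\bigoplus_n f_i^{(n)}\bigl(L(\infty)\cap\Ker e_i'\bigr)$, and likewise $L(\infty)\cap\Ker e_i'$ is spanned over $\A_0$ by the elements $\tf_{j_1}\cdots\tf_{j_\ell}\cdot1$ lying in $\Ker e_i'$. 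The key sublemma is that the $i$‑string decomposition is defined over $\A$: an element $a=\sum_n f_i^{(n)}a_n$ with $e_i'a_n=0$ lies in $\Uf_\A$ iff every $a_n$ lies in $\Uf_\A$. This follows because $f_i^{(n)}\in\Uf_\A$ and because the projection onto the "$\Ker e_i'$, degree‑$0$ in $f_i$" summand can be written using the divided powers of $e_i'$ together with the combinatorial identities relating $e_i'$, $f_i^{(n)}$ in the $q$‑boson algebra, all of which have coefficients in $\A$.

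\medskip
\textbf{Key steps, in order.} (1) Set $V^{\langle i\rangle}:=\Ker e_i'$, with induced lattices $L^{\langle i\rangle}_0:=L(\infty)\cap V^{\langle i\rangle}$, $L^{\langle i\rangle}_\infty:=L(\infty)^-\cap V^{\langle i\rangle}$ and $V^{\langle i\rangle}_\A:=\Uf_\A\cap V^{\langle i\rangle}$; by the compatibility of all three structures with the $i$‑string decomposition, it is enough to show the triple $(L^{\langle i\rangle}_0,L^{\langle i\rangle}_\infty,V^{\langle i\rangle}_\A)$ is balanced in $V^{\langle i\rangle}$. (2) Observe that $V^{\langle i\rangle}\cap\Uf_\beta$ is, as a module, a direct summand of $\Uf_{\beta'}$ for weights $\beta'$ with $\height(\beta')<\height(\beta)$ when restricted appropriately — more precisely one filters $V^{\langle i\rangle}$ by the subalgebra generated by $f_j$, $j\ne i$, via the Serre relations, so that $V^{\langle i\rangle}$ is built from lower‑height pieces for which the balanced property holds by induction. (3) Conclude that $E^{\langle i\rangle}:=L^{\langle i\rangle}_0\cap L^{\langle i\rangle}_\infty\cap V^{\langle i\rangle}_\A\to L^{\langle i\rangle}_0/qL^{\langle i\rangle}_0$ is an isomorphism, hence so is $E\to L(\infty)/qL(\infty)$ by summing over the $i$‑strings using the compatible decompositions in step (1). (4) Finally verify that $E$ so constructed is independent of the chosen $i$ — this is automatic once one knows it maps isomorphically onto $L(\infty)/qL(\infty)$, since then $E$ is characterized as the unique bar‑invariant lift contained in $\Uf_\A$.

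\medskip
\textbf{Main obstacle.} The crux is the arithmetic integrality in step (1): showing that the $q$‑boson decomposition $a=\sum_n f_i^{(n)}a_n$ restricts to $\Uf_\A$, equivalently that $\tf_i$ and $\tei$ behave well enough with respect to $\Uf_\A$ that $L(\infty)\cap\Uf_\A$ is spanned over $\A$ by the monomials $\tf_{j_1}\cdots\tf_{j_\ell}\cdot1$. One must express the projectors onto the string components in terms of $e_i'^{(k)}$ and $f_i^{(k)}$ and check that no denominators beyond $\A$ appear; the $q$‑boson commutation relation $e_i'f_i^{(n)}=f_i^{(n)}e_i'+q_i^{-n+1}f_i^{(n-1)}$ and the resulting formula for the component projections make this a purely combinatorial check, but it is the only place where something could genuinely fail, and it is what forces the use of divided powers in the definition of $\Uf_\A$. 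Everything else is bookkeeping with graded pieces and the induction on $\height(\beta)$.
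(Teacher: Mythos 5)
The theorem you set out to prove is stated in the paper without proof; it is Kashiwara's existence theorem for the lower global basis of $\Uf$, cited from \cite{K}. The proof there is the ``grand loop'' argument, and your proposal differs from it in an essential, and in fact fatal, way.

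The gap is your step (2). The $i$-string decomposition $\Uf_{\beta}=\soplus_{n\ge 0}f_i^{(n)}(\Ker e_i')_{\beta+n\alpha_i}$ does express $\Uf_\beta$ in terms of weight spaces of $\Ker e_i'$ of height $\le\height(\beta)$, but the $n=0$ summand $(\Ker e_i')_\beta$ has \emph{exactly} the same height as $\beta$, so the induction on $\height(\beta)$ does not close. Your attempt to repair this by filtering $\Ker e_i'$ through the subalgebra generated by $\{f_j\}_{j\ne i}$ fails on two counts. First, that subalgebra is a \emph{proper} subalgebra of $\Ker e_i'$: already for $\mathfrak{sl}_3$ the element $qf_1f_2-f_2f_1$ lies in $\Ker e_1'$ (one computes $e_1'(f_1f_2)=f_2$ and $e_1'(f_2f_1)=qf_2$) but is not in $\K[f_2]$. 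Second, and more to the point, the weight-$\beta$ piece of that subalgebra still has height $\height(\beta)$, so nothing has been reduced. You also cannot iterate the trick with a second index $j$: $\Ker e_i'$ is not stable under $e_j'$, because the $e_i'$'s satisfy the Serre relations rather than commuting (from $e_1'^2e_2'-[2]e_1'e_2'e_1'+e_2'e_1'^2=0$ and $e_1'a=0$ one gets only $e_2'a\in\Ker e_1'^2$, not $\Ker e_1'$), so $\Ker e_i'$ is not a $\K[e_j',f_j]$-module and the $j$-string decomposition does not restrict to it.

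What Kashiwara actually does is prove the balancedness of $\Uf$ and of every integrable highest weight module $V(\lambda)$ \emph{simultaneously}, by a single induction on height over all these interlocked statements. Inside $V(\lambda)$ the $i$-string reduction genuinely lowers height (the top of each string lies strictly above $\lambda-\beta$), and for a fixed $\beta$ one transfers the result back to $\Uf$ by comparing with $V(\lambda)$ for $\lambda$ large via $\pi_\lambda\colon\Uf\to V(\lambda)$, which is an isomorphism on the relevant weight spaces. This coupling with the modules $V(\lambda)$ is the missing ingredient; the statement cannot be proved by induction within $\Uf$ alone as you propose. The integrality point you single out as the ``main obstacle'' in step (1) is a genuine lemma (the string decomposition is defined over $\A$, via the projectors built from $e_i'{}^{(k)}$ and $f_i^{(k)}$), but it is the easier half; it is the height reduction in step (2) that actually fails.
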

Let 
\[
\Gl^{\text{low}}\colon L(\infty)/\qs L(\infty)\isoto 
E\seteq L(\infty)\cap L(\infty)^-
\cap \Uf_\A
\] 
be the inverse of $E\isoto L(\infty)/\qs L(\infty)$.
Then $\set{\Gl^{\text{low}}(b)}{b\in \mathbb{B}(\infty)}$ forms a basis of $\Uf$.
We call it a (lower) {\em global basis}.
It is first introduced by G. Lusztig (\cite{L})
under the name of ``canonical basis'' for the A, D, E cases.

\begin{dfn}
Let
\[
\set{\Gl^\upper(b)}{b \in \CB(\infty)}
\]
be the dual basis of $\set{\Gl^\low(b)}{b \in \CB(\infty)}$ 
with respect to the inner product $( \scbul,\scbul)$. 
We call it the upper global basis of $U_q^{-}(\mf{g})$.
\end{dfn}

\subsection{Symmetric crystals}
\label{subsec:symcry}

Let $\theta$ be an automorphism of
$I$ such that $\theta^2=\id$ and 
$(\al_{\theta(i)},\al_{\theta(j)})=(\al_i,\al_j)$.
Hence it extends to an automorphism of the root lattice $Q$
by $\theta(\al_i)=\al_{\theta(i)}$,
and induces an automorphism of $\U$.

\begin{dfn}\label{def:Bt}
Let $\B_\theta(\g)$ be the $\K$-algebra
generated by $E_i$, $F_i$, and
invertible elements $T_i$ \ro$i\in I$\rf\ 
satisfying the following defining relations:
\begin{enumerate}[{\rm(i)}]
\item the $T_i$'s commute with each other,
\item
$T_{\theta(i)}=T_i$ for any $i$,
\item
$T_iE_jT_i^{-1}=q^{(\al_i+\al_{\theta(i)},\al_j)}E_j$ and
$T_iF_jT_i^{-1}=q^{(\al_i+\al_{\theta(i)},-\al_j)}F_j$
for $i,j\in I$,
\item
$E_iF_j=q^{-(\al_i,\al_j)}F_jE_i+
(\delta_{i,j}+\delta_{\theta(i),j}T_i)$
for $i,j\in I$,
\item
the $E_i$'s and the $F_i$'s satisfy the Serre relations 
{\rm (Definition~\ref{U_q(g)} (4))}.
\end{enumerate}
\end{dfn}

We set $E_i^{(n)}=E_i^n/[n]_i!$
and $F_i^{(n)}=F_i^n/[n]_i!$.

\Lemma Identifying $\Uf$ with the subalgebra of $\Bt$ generated by the $F_i$'s,
we have
\eq
&&T_ia=\bl(\Ad(t_it_{\theta(i)})a\br)T_i,\\
&&E_ia=\bl(\Ad(t_i)a\br)E_i+e'_ia+\bl(\Ad(t_i)(e_{\theta(i)}^*a)\br)T_i
\label{eq:Ea}
\eneq
for $a\in\Uf$.
\enlemma
\Proof
The first relation is obvious.
In order to prove the second,
it is enough to show that if $a$ satisfies \eqref{eq:Ea}, then
$f_ja$ satisfies \eqref{eq:Ea}.
We have
\eqn
E_i(f_ja)&=&(q^{-(\al_i,\al_j)}f_jE_i+\delta_{i,j}+\delta_{\theta(i),j}T_i)a\\
&=&
q^{-(\al_i,\al_j)}f_j(\bl(\Ad(t_i)a\br)E_i+e'_ia+
\bl(\Ad(t_i)(e_{\theta(i)}^*a)\br)T_i)\\
&&\hs{25ex}
+\delta_{i,j}a+\delta_{\theta(i),j}\bl(\Ad(t_it_{\theta(i)})a\br)T_i\\
&=&(\bl(\Ad(t_i)(f_ja)\br)E_i+e'_i(f_ja)+
\bl(\Ad(t_i)(e_{\theta(i)}^*(f_ja)\br)T_i.
\eneqn
\QED
The following lemma can be proved in a standard manner and we omit the proof.
\Lemma
Let $\K[T_i^{\pm};i\in I]$ be the commutative $\K$-algebra
generated by invertible elements $T_i$ $(i\in I)$ with the defining relation
$T_{\theta(i)}=T_i$.
Then the map $\Uf\otimes\K[T_i^{\pm};i\in I]\otimes\Ue\to\Bt$ 
induced by the multiplication is bijective.
\enlemma

Let $\la\in P_+\seteq\set{\la\in \Hom(Q,\Q)}{
\text{$\lan \al_i^\vee,\la\ran\in\Z_{\ge0}$ for any $i\in I$}}$
be a dominant integral weight such that
$\theta(\la)=\la$.
\begin{prop}\label{prop:Vtheta}
\begin{enumerate}[{\rm(i)}]
\item
There exists a $\B_\theta(\g)$-module $V_\theta(\la)$
generated by a non-zero vector $\vac_\la$ such that
\be[{\rm(a)}]
\item
$E_i\vac_\la=0$ for any $i\in I$,
\item
$T_i\vac_\la=q^{(\al_i,\la)}\vac_\la$ for any $i\in I$,
\item
$\set{u\in V_\theta(\la)}{\text{$E_iu=0$ for any $i\in I$}}
=\K\vac_\la$.
\ee
Moreover such a $V_\theta(\la)$ is irreducible and
unique up to an isomorphism.
\item
there exists a unique symmetric bilinear form $(\scbul,\scbul)$
on $V_\theta(\la)$ such that $(\vac_\la,\vac_\la)=1$ and
$(E_iu,v)=(u,F_iv)$ for any $i\in I$ and $u,v\in V_\theta(\la)$,
and it is non-degenerate.
\end{enumerate}
\end{prop}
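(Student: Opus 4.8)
The plan is to construct $V_\theta(\la)$ explicitly as a quotient of $\Bt$ and then verify the three conditions (a)--(c), the irreducibility, the uniqueness, and finally the bilinear form. Concretely, using the triangular decomposition $\Uf\otimes\K[T_i^{\pm}]\otimes\Ue\isoto\Bt$ from the lemma just proved, I would let $\Bt$ act on itself by left multiplication and form the left ideal $\mathcal{J}_\la$ generated by the $E_i$'s and by $T_i-q^{(\al_i,\la)}$ ($i\in I$); set $M_\theta(\la)\seteq\Bt/\mathcal{J}_\la$ with $\vac_\la$ the image of $1$. The triangular decomposition immediately gives $M_\theta(\la)\simeq\Uf$ as a $\K$-vector space (indeed as a $\Uf$-module under left multiplication), so $M_\theta(\la)$ is a free $\Uf$-module of rank one, which is the ``Verma-type'' object. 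It satisfies (a) and (b) by construction.

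Next I would analyze $\{u\in M_\theta(\la) : E_iu=0\ \forall i\}$. Using the identification $M_\theta(\la)\simeq\Uf$ and formula \eqref{eq:Ea}, the operator $E_i$ acts on $a\in\Uf$ by $a\mapsto e'_ia + q^{(\al_i,\la)}\bl(\Ad(t_i)(e^*_{\theta(i)}a)\br)$ (the $\Ad(t_i)a\,E_i$ term dies because $E_i\vac_\la=0$). So I must show that the only solutions of $e'_ia + q^{(\al_i,\la)}\Ad(t_i)(e^*_{\theta(i)}a)=0$ for all $i$ are the scalars. Working with the weight grading $\Uf=\oplus_\beta\Uf_{-\beta}$ and the non-degenerate pairing $(\scbul,\scbul)$ on $\Uf$ with $(e'_ia,b)=(a,f_ib)$, one can pair the equation against $f_ib$ and against $bf_i$ to get relations; more efficiently, I would argue by induction on $\height(\beta)$: if $a\in\Uf_{-\beta}$ with $\beta\ne0$ lies in the common kernel, pick $i$ with $e'_ia\ne0$ (possible since the $e'_i$ jointly have no common kernel in positive degree), and derive a contradiction from the fact that $e'_ia$ and $\Ad(t_i)(e^*_{\theta(i)}a)$ lie in $\Uf_{-(\beta-\al_i)}$ with $e'_i$ injective-on-the-relevant-piece type estimates — this is where the real work is. Granting this, the submodule $N$ generated by all $E_iu$-type obstructions is the unique maximal proper submodule: any proper submodule is contained in $\oplus_{\beta\ne0}$ (since it cannot contain $\vac_\la$), hence contains a nonzero highest-weight-type vector only if it meets the common kernel of the $E_i$'s, which forces it to contain $\vac_\la$ — contradiction. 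Therefore $V_\theta(\la)\seteq M_\theta(\la)/N$ is irreducible and satisfies (c); uniqueness follows because any module generated by a vector satisfying (a),(b) is a quotient of $M_\theta(\la)$, and if it also satisfies (c) it must be the unique irreducible quotient.

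For part (ii), the bilinear form: on $M_\theta(\la)$ there is at most one symmetric form with $(\vac_\la,\vac_\la)=1$ and $(E_iu,v)=(u,F_iv)$, because such a form is determined on $F_{i_1}\cdots F_{i_\ell}\vac_\la$ by moving the $F$'s across as $E$'s and using (a),(b). Existence: I would define it via the contravariant form on $\Uf$ already available — transport $(\scbul,\scbul)_{\Uf}$ through $M_\theta(\la)\simeq\Uf$ and check, using \eqref{eq:Ea} together with the identities $(e'_ia,b)=(a,f_ib)$ and $(e^*_ia,b)=(a,bf_i)$, that the adjunction $(E_iu,v)=(u,F_iv)$ holds; symmetry is inherited. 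The radical of this form on $M_\theta(\la)$ is then a proper submodule (it doesn't contain $\vac_\la$), hence contained in $N$; conversely $N$ is in the radical by general contravariant-form nonsense, so the form descends to a non-degenerate symmetric form on $V_\theta(\la)=M_\theta(\la)/N$.

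The main obstacle is the second step: proving that the common kernel of the twisted operators $a\mapsto e'_ia+q^{(\al_i,\la)}\Ad(t_i)(e^*_{\theta(i)}a)$ on $\Uf$ is exactly $\K\cdot1$. The mixing of $e'_i$ (a ``left'' derivation) with $e^*_{\theta(i)}$ (a ``right'' derivation) twisted by $\theta$ means one cannot simply cite the known fact that $\cap_i\Ker e'_i=\K$ in positive degree; I expect to need a careful weight-by-weight argument, possibly choosing the vertex $i$ so that $\al_i$ and $\al_{\theta(i)}$ interact controllably (when $\theta(i)=i$ the two terms combine, when $\theta(i)\ne i$ they land in the same weight space but come from different ``sides''), and using the non-degeneracy of the contravariant pairing to convert the kernel condition into an orthogonality condition that can be contradicted. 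Everything else is bookkeeping with the triangular decomposition and the adjunction formulas.
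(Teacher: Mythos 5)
Your proposal and the paper's proof diverge at exactly the point you flag as the main obstacle, and there the difference is fatal: the key lemma you would need is in fact \emph{false}. You build only the single Verma-type module $M_\theta(\la)=\Bt/\mathcal{J}_\la\simeq\Uf\vac'_\la$, on which $E_i$ acts by the twisted operator $a\mapsto e'_ia+q^{(\al_i,\la)}\Ad(t_i)(e^*_{\theta(i)}a)$, and you then aim to show that the common kernel of these twisted operators is $\K\cdot1$. If that were true it would force $M_\theta(\la)$ to be irreducible (any nonzero submodule contains a highest $\theta$-weight vector, which would then lie in the kernel, hence be a scalar multiple of $\vac_\la$), so your ``$V_\theta(\la)=M_\theta(\la)/N$'' with $N=0$. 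But take $\la=0$: for any $j$ one computes
\[
e'_i(f_j-f_{\theta(j)})+\Ad(t_i)\bl(e^*_{\theta(i)}(f_j-f_{\theta(j)})\br)
=(\delta_{ij}-\delta_{i,\theta(j)})+(\delta_{\theta(i),j}-\delta_{ij})=0,
\]
so $f_j-f_{\theta(j)}$ lies in the common kernel, and indeed $\Uf\vac'_0$ has the proper quotient $\Uf/\sum_i\Uf(f_i-f_{\theta(i)})$ (this is precisely the $\tVt$ appearing later in the paper). Thus the lemma you single out as ``the real work'' cannot be done, and your argument for (c) and for irreducibility collapses.

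The paper avoids this entirely by introducing a \emph{second} $\Bt$-module $\Uf\vac''_\la$ in which the action is arranged the other way: $E_i(a\vac''_\la)=(e'_ia)\vac''_\la$ is untwisted, while $F_i$ is twisted. It then pairs $\Uf\vac'_\la$ against $\Uf\vac''_\la$, produces the $\Bt$-linear map $\psi:\Uf\vac'_\la\to\Uf\vac''_\la$ sending $\vac'_\la\mapsto\vac''_\la$, and \emph{defines} $V_\theta(\la)$ as the image $\psi(\Uf\vac'_\la)$. Property (c) then follows for free from $\bigcap_i\Ker e'_i=\K$ applied inside $\Uf\vac''_\la$, because $V_\theta(\la)$ is a \emph{submodule} of $\Uf\vac''_\la$ where $E_i$ acts simply as $e'_i$. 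Irreducibility, uniqueness, and the symmetric form $\dpo u,u'\dpf=\pbw{u,\psi(u')}$ all then come out with essentially no further work. So the missing idea in your proposal is the dual/co-Verma module $\Uf\vac''_\la$ and the realization of $V_\theta(\la)$ as $\operatorname{image}(\psi)$ sitting inside it; without that, the analysis of the twisted common kernel is not just hard but doomed.
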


\begin{rem}\label{rem:tweight}
Set $P_\theta=\set{\mu\in P}{\theta(\mu)=\mu}$.
Then $\Vt[\la]$ has a weight decomposition
$$\Vt[\la]=\soplus_{\mu\in P_\theta}\Vt[\la]_\mu,$$
where
$\Vt[\la]_\mu=\set{u\in \Vt[\la]}{T_iu=q^{(\al_i,\mu)}u}$.
We say that an element $u$ of $\Vt[\la]$ has a $\theta$-weight
$\mu$ and write $\wt_\theta(u)=\mu$
if $u\in\Vt[\la]_\mu$.
We have $\wt_\theta(E_iu)=\wt_\theta(u)+(\al_i+\al_{\theta(i)})$
and $\wt_\theta(F_iu)=\wt_\theta(u)-(\al_i+\al_{\theta(i)})$.
\end{rem}

In order to prove Proposition~\ref{prop:Vtheta}, 
we shall construct two $\Bt$-modules.

\Lemma
Let $\Uf\vac'_\la$ be a free $\Uf$-module with a generator
$\vac'_\la$.
Then the following action
gives a structure of a $\Bt$-module on $\Uf\vac'_\la$ 
{\rm:} 
\eq&&
\left\{
\ba{rcl}
T_i(a\vac'_\la)&=&q^{(\al_i,\la)}(\Ad(t_it_{\theta(i)})a)\vac'_\la,\\[2pt]
E_i(a\vac'_\la)&=&
\bigl(e'_ia+q^{(\alpha_i,\la)}\Ad(t_i)(e^*_{\theta(i)}a)\bigr)\vac'_\la,\\[2pt]
F_i(a\vac'_\la)&=&(f_ia)\vac'_\la,
\ea\right.
\quad\text{for any $i\in I$ and $a\in\Uf$.}
\eneq
Moreover $\Bt/\ssum_{i\in I}
(\Bt E_i+\Bt(T_i-q^{(\al_i,\la)}))\to\Uf\vac'_\la$
is an isomorphism.
\enlemma
\Proof
We can easily check the defining relations in Definition~\ref{def:Bt}
except the Serre relations for the $E_i$'s.
For $i\not=j\in I$, set $S=\sum_{n=0}^{b}(-1)^nE_i^{(n)}E_jE_i^{(b-n)}$
where $b=1-\pbw{h_i,\alpha_j}$.
It is enough to show that the action of $S$ on $\Uf\vac'_\la$
is equal to $0$.
We can check easily that $SF_k=q^{-(b\alpha_i+\alpha_j,\alpha_k)}F_kS$.
Since $S\vac'_\la=0$, we have $S\Uf\vac'_\la=0$.

Hence $\Uf\vac'_\la$ has a $\Bt$-module structure.

The last statement is obvious.
\QED
\Lemma
Let $\Uf\vac''_\la$ be a free $\Uf$-module with a generator
$\vac''_\la$.
Then the following action
gives a structure of a $\Bt$-module on $\Uf\vac''_\la$ 
{\rm:} 
\eq&&\left\{\ba{rcl}
T_i(a\vac''_\la)&=&q^{(\al_i,\la)}(\Ad(t_it_{\theta(i)})a)\vac''_\la,\\[2pt]
E_i(a\vac''_\la)&=&(e'_ia)\vac''_\la,\\[2pt]
F_i(a\vac''_\la)&=&
\bigl(f_ia+q^{(\alpha_i,\la)}(\Ad(t_i)a)f_{\theta(i)}\bigr)\vac''_\la,
\ea\right.\quad \text{for any $i\in I$ and $a\in\Uf$.}
\label{eq:u''}
\eneq
Moreover, there exists a non-degenerate bilinear form
$\pbw{\scbul,\scbul}\cl \Uf\vac'_\la\times\Uf\vac''_\la\to\K$
such that
$\pbw{F_iu,v}=\pbw{u,E_iv}$,
$\pbw{E_iu,v}=\pbw{u,F_iv}$,
$\pbw{T_iu,v}=\pbw{u,T_iv}$ for $u\in \Uf\vac'_\la$
and $v\in \Uf\vac''_\la$, and
$\pbw{\vac'_\la,\vac''_\la}=1$.
\enlemma
\Proof
There exists a unique symmetric bilinear form $(\scbul,\scbul)$
on $\Uf$ such that $(1,1)=1$ and
$f_i$ and $e'_i$ are transpose to each other.
Let us define $\pbw{\scbul,\scbul}\cl \Uf\vac'_\la\times\Uf\vac''_\la\to \K$
by $\pbw{a\vac'_\la,b\vac''_\la}=(a,b)$ for $a\in \Uf$ and
$b\in\Uf$.
Then we can easily check
$\pbw{F_iu,v}=\pbw{u,E_iv}$, $\pbw{T_iu,v}=\pbw{u,T_iv}$.
Since $e^*_i$ is transpose to the right multiplication
of $f_i$, we have
$\pbw{E_iu,v}=\pbw{u,F_iv}$.
Hence the action of $E_i$, $F_i$, $T_i$ on $\Uf\vac''_\la$
satisfy the defining relations in Definition~\ref{def:Bt}.
\QED

\Proof[Proof of Proposition~\ref{prop:Vtheta}]
Since $E_i\vac''_\la=0$
and $\vac''_\la$ has a $\theta$-weight $\la$,
there exists a unique $\Bt$-linear morphism $\psi\cl
\Uf\vac'_\la\to\Uf\vac''_\la$
sending $\vac'_\la$ to $\vac''_\la$.
Let $V_\theta(\la)$ be its image $\psi(\Uf\vac'_\la)$.

(i)(c) follows from $\set{u\in\Uf}{\text{$e'_iu=0$ for any $i$}}=\K$
applying to $\Uf\vac''_\la\supset V_\theta(\la)$.
The other properties (a), (b) are obvious.
Let us show that $\Vt[\la]$ is irreducible.
Let $S$ be a non-zero $\Bt$-submodule.
Then $S$ contains a non-zero vector $v$ such that $E_iv=0$ for any $i$.
Then (c) implies that $v$ is a constant multiple of $\vac_\la$.
Hence $S=\Vt[\la]$.

Let us prove (ii).
For $u,u'\in\Uf\vac'_\la$, set $\dpo u,u'\dpf=\pbw{u,\psi(u')}$.
Then it is a bilinear form on $\Uf\vac'_\la$ which
satisfies 
\eq
&&\dpo \vac'_\la,\vac'_\la\dpf=1,\ 
\dpo F_iu,u'\dpf=\dpo u,E_iu'\dpf,\ 
\dpo E_iu,u'\dpf=\dpo u,F_iu'\dpf,\ 
\dpo T_iu,u'\dpf=\dpo u,T_iu'\dpf.\label{eq:dp}
\eneq
It is easy to see that a bilinear form which satisfies \eqref{eq:dp}
is unique.
Since $\dpo u',u\dpf$ also satisfies \eqref{eq:dp},
$\dpo u,u'\dpf$ is a symmetric bilinear form on $\Uf\vac'_\la$.
Since $\psi(u')=0$ implies
$\dpo u,u'\dpf=0$, 
$\dpo u,u'\dpf$ induces a symmetric bilinear form on $\Vt[\la]$.
%
Since $(\scbul,\scbul)$ is non-degenerate on $\Uf$,
$\dpo\scbul,\scbul\dpf$ is a non-degenerate symmetric bilinear
form on $V_\theta(\la)$.

\QED

\Lemma
There exists a unique endomorphism $-$ of $\Vt[\la]$
such that $\ol{\vac_\la}=\vac_\la$ and
$\ol{av}=\bar{a}\bar{v}$, $\ol{F_iv}=F_i\bar{v}$ for any $a\in \K$
and $v\in\Vt[\la]$.
\enlemma
\Proof
The uniqueness is obvious.

Let $\xi$ be an anti-involution of $\Uf$ such that
$\xi(q)=q^{-1}$ and $\xi(f_i)=f_{\theta(i)}$.
Let $\tp$ be an element of $\Q\otimes P$ such that
$(\tp,\alpha_i)=(\alpha_i,\al_{\theta(i)})/2$.
Define $c(\mu)=\bl((\mu+\tp,\theta(\mu+\tp))-(\tp,\theta(\tp))\br)/2+(\la,\mu)$
for $\mu\in P$.
Then it satisfies
$$c(\mu)-c(\mu-\al_i)=(\la+\mu,\al_{\theta(i)}).$$
Then we define the endomorphism $\Phi$ of $\Uf\vac_\la''$
by $\Phi(a\vac''_\la)=q^{-c(\mu)}\xi(a)\vac''_\la$ for $a\in\Uf_\mu$.
Let us show that
\eq
\Phi(F_i(a\vac''_\la))=F_i\Phi(a\vac''_\la)\quad\text{for any $a\in\Uf$.}
\label{eq:Phi}
\eneq
For $a\in\Uf_\mu$, we have 
\eqn
\Phi(F_i(a\vac''_\la))&=&
\Phi\bl(f_ia+q^{(\al_i,\la+\mu)}af_{\theta(i)}\br)\vac''_\la\\
&=&\bl(q^{-c(\mu-\al_i)}\xi(a)f_{\theta(i)}
+q^{-(\al_i,\la+\mu)-c(\mu-\al_{\theta(i)})}f_i\xi(a)\br)\vac''_\la.
\eneqn
On the other hand, we have
\eqn
F_i\Phi(a\vac''_\la)&=&
F_i\bl(q^{-c(\mu)}\xi(a)\vac''_\la\br)\\
&=&q^{-c(\mu)}\bl(f_i\xi(a)+q^{(\al_i,\la+\theta(\mu))}\xi(a)f_{\theta(i)}\br)
\vac''_\la.
\eneqn
Therefore we obtain \eqref{eq:Phi}.

Hence $\Phi$ induces the desired endomorphism of
$\Vt\subset\Uf\vac''_\la$.
\QED

Hereafter we assume further that
\[
\text{there is no $i\in I$ such that $\theta(i)=i$.}
\]
We conjecture that $V_\theta(\la)$ has a crystal basis.
This means the following.
Since $E_i$ and $F_i$ satisfy the $q$-boson relation, 
any $u\in\Vt[\la]$ can be uniquely written
as $u=\sum_{n\ge0}F_i^{(n)}u_n$ with $E_iu_n=0$.
We define the modified root operators $\tE_i$ and $\tF_i$ by:
\[
\tE_i(u)=\sum_{n\ge1}F_i^{(n-1)}u_n\ 
\text{and}\ 
\tF_i(u)=\sum_{n\ge0}F_i^{(n+1)}u_n.
\]
Let $L_\theta(\la)$ be the $\A_0$-submodule of
$V_\theta(\la)$ generated by $\tF_{i_1}\cdots\tF_{i_\ell}\vac_\la$
($\ell\ge0$ and $i_1,\ldots,i_\ell\in I$\,),
and let $\CB_\theta(\la)$ be the subset 
\[
\set{\tF_{i_1}\cdots\tF_{i_\ell}\vac_\la\bmod \qs L_\theta(\la)}%
{\text{$\ell\ge0$, $i_1,\ldots, i_\ell\in I$}}
\]
of $L_\theta(\la)/\qs L_\theta(\la)$.
\begin{conj}\label{conj:crystal}
Let $\la$ be a dominant integral weight such that $\theta(\la)=\la$.
Then we have
\begin{enumerate}
\item
$\tF_iL_\theta(\la)\subset L_\theta(\la)$
and $\tE_iL_\theta(\la)\subset L_\theta(\la)$,
\item
$\CB_\theta(\la)$ is a basis of $L_\theta(\la)/\qs L_\theta(\la)$,
\item
$\tF_i\CB_\theta(\la)\subset \CB_\theta(\la)$,
and
$\tE_i\CB_\theta(\la)\subset \CB_\theta(\la)\sqcup\{0\}$,
\item
$\tF_i\tE_i(b)=b$ for any $b\in  \CB_\theta(\la)$ such that $\tE_ib\not=0$,
and $\tE_i\tF_i(b)=b$ for any $b\in  \CB_\theta(\la)$.
\end{enumerate}
\end{conj}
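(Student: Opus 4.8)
The plan is to run, for general $\g$ and general $\la$, the argument that works in the special case $\g=\gl_\infty$, $\la=0$: construct a PBW-type basis of $\Vt[\la]$, compute the $E_i$- and $F_i$-actions on it in closed form, and feed the resulting $q$-order estimates into the crystal-basis criterion (Theorem~\ref{th:crcr}). The natural set-up is the realization $\Vt[\la]\subset\Uf\vac''_\la$ together with the companion model $\Uf\vac'_\la$ and the pairing between them constructed in the lemmas preceding Proposition~\ref{prop:Vtheta}. Observe that the bar operator $-$ on $\Vt[\la]$ has already been built above, via the twisted anti-involution $a\vac''_\la\mapsto q^{-c(\mu)}\xi(a)\vac''_\la$; so once the crystal structure (1)--(4) is established, the lower and upper global bases will follow from balancedness exactly as in Theorem~\ref{th:gl}, with no extra input. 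Thus the whole conjecture reduces to producing a good PBW basis and controlling the transition coefficients.

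\textbf{Key steps.} First I would fix PBW data for $\Uf$ (a convex order on the positive roots, equivalently reduced-word data on the finite-type pieces) and transport Lusztig's PBW basis of $\Uf$ to vectors $P(\mathbf c)\vac'_\la$; the involution $\theta$ forces a ``$\theta$-folded'' reindexing of these, generalizing the $\theta$-restricted multisegments $\mb$ that parametrize $\{\Pt(\mb)\vac\}_{\mb}$ in the $\gl_\infty$ case. Next, from the commutation formula
\[
E_ia=\bl(\Ad(t_i)a\br)E_i+e'_ia+\bl(\Ad(t_i)(e_{\theta(i)}^*a)\br)T_i
\]
of \eqref{eq:Ea} and $T_i\vac'_\la=q^{(\al_i,\la)}\vac'_\la$ one reads off $E_i(a\vac'_\la)=\bl(e'_ia+q^{(\al_i,\la)}\Ad(t_i)(e^*_{\theta(i)}a)\br)\vac'_\la$, and dually $F_i(a\vac''_\la)=\bl(f_ia+q^{(\al_i,\la)}(\Ad(t_i)a)f_{\theta(i)}\br)\vac''_\la$. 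Since the right-hand sides are expressed through the already-understood operators $e'_i$, $e^*_i$, $f_i$, $\Ad(t_i)$ on $\Uf$, the action of $E_i$ and $F_i$ on the PBW vectors can be written explicitly; the substantive task is then to extract the transition matrices and bound the $q$-orders of their entries, verifying that the weight-dependent prefactors $q^{(\al_i,\la)}$ do not violate the inequalities required by Theorem~\ref{th:crcr}. That criterion then delivers (1)--(4) at once, with $L_\theta(\la)$ the $\A_0$-span of the PBW vectors and $\CB_\theta(\la)$ their residues mod $q$.

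\textbf{Main obstacle.} There are two difficulties of very different weight. The dependence on $\la$ introduces the extra powers $q^{(\al_i,\la)}$ into the $E_i$- and $F_i$-formulas, and these must be tracked through the PBW recursion; I expect this to be handled by renormalizing the PBW vectors by a suitable weight-dependent power of $q$ (in the spirit of the function $c(\mu)$ used to define $-$), so it is a bookkeeping issue rather than a conceptual one. The genuinely hard point is arbitrary $\g$: outside type $A_\infty$ there is no transparent combinatorial model playing the role of $\theta$-restricted multisegments, and in affine or non-simply-laced types the ordinary PBW theory of $\Uf$ is already subtle, so the explicit coefficient control that Theorem~\ref{th:crcr} demands is not currently available. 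This is why the statement remains a conjecture in general; the present paper proves it for $\g=\gl_\infty$ and $\la=0$ (Theorems~\ref{main:cr} and \ref{th:gl}), where the PBW basis $\{\Pt(\mb)\vac\}_{\mb}$ makes all the estimates effective.
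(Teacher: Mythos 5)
What you were given is Conjecture~\ref{conj:crystal}, which the paper does not prove in general: it is a conjecture, established in the paper only for $\g=\gl_\infty$ and $\la=0$ (Theorems~\ref{main:cr} and \ref{th:gl}). You recognize this correctly, and your strategy sketch for the special case---construct a PBW-type basis $\{\Pt(\mb)\vac\}$ indexed by $\theta$-restricted multisegments, compute the $E_i$- and $F_i$-actions explicitly (Theorems~\ref{th:F-k}, \ref{th:E-k}, \ref{th:EF}), and feed the resulting $q$-order estimates into the criterion of Theorem~\ref{th:crcr}---is exactly the paper's route; your assessment of the two obstacles to generalization is also the right one.

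One caveat on the side remark: you say that once parts (1)--(4) are in place, global bases ``follow from balancedness~$\ldots$ with no extra input.'' Establishing balancedness (Conjecture~\ref{conj:bal}) is itself a separate step: in the $\gl_\infty$ case it rests on the bar involution being upper-triangular with respect to the PBW basis (Propositions~\ref{prop:ut} and \ref{barpp}) and on $\Vt_\A$ being spanned by the PBW vectors, neither of which is a formal consequence of the crystal-basis properties. This does not affect the conjecture you were asked about, which concerns only the crystal structure, but it does mean the reduction ``to producing a good PBW basis and controlling transition coefficients'' covers Conjecture~\ref{conj:crystal} and not automatically Conjecture~\ref{conj:bal}.
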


As in \cite{K}, we have
\Lemma
Assume {\rm Conjecture~\ref{conj:crystal}}.
Then we have
\bnum
\item
$\Lt[\la]=\set{v\in\Vt[\la]}{(\Lt[\la],v)\subset\A_0}$,
\item
Let $(\scbul,\scbul)_0$ be the $\Q$-valued symmetric bilinear form
on $\Lt[\la]/q\Lt[\la]$ induced by $(\scbul,\scbul)$.
Then $\CB_\theta(\la)$ is an orthonormal basis with respect to
 $(\scbul,\scbul)_0$.
\enum
\enlemma

Moreover we conjecture that
$V_\theta(\la)$ has a global crystal basis.
Namely we have
\begin{conj}\label{conj:bal}
$(L_\theta(\la),L_\theta(\la)^-,\Vt[\la]^\lw_\A)$
is balanced.
Here $\Vt[\la]^\lw_\A\seteq\Uf_\A\vac_\la$.
\end{conj}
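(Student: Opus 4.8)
Assuming Conjecture~\ref{conj:crystal} (and the lemma following it), the plan is Kashiwara's: reduce balancedness to the construction of bar-invariant lifts of the crystal basis, and build those lifts by an induction whose engine is a PBW-type basis on which the bar operator is triangular. First, $\Vt[\la]^\lw_\A=\Uf_\A\vac_\la$ is stable under the bar operator $-$ of $\Vt[\la]$: $\ol{\vac_\la}=\vac_\la$, $\ol{F_iv}=F_i\bar v$, and $\ol{[n]_i!}=[n]_i!$, so $\ol{F_i^{(n)}v}=F_i^{(n)}\bar v$ and $\Vt[\la]^\lw_\A$, being $\A$-generated by the $F_{i_1}^{(n_1)}\cdots F_{i_\ell}^{(n_\ell)}\vac_\la$, is bar-stable; hence $\Lt[\la]^-=\ol{\Lt[\la]}$ is an $\A_\infty$-lattice and $E\seteq\Lt[\la]\cap\Lt[\la]^-\cap\Vt[\la]^\lw_\A$ is bar-stable. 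It therefore suffices to exhibit: an $\A$-basis $\{\Pt(\mb)\vac_\la\}_\mb$ of $\Vt[\la]^\lw_\A$, with $\mb\mapsto b(\mb)$ a bijection onto $\CB_\theta(\la)$, which is also an $\A_0$-basis of $\Lt[\la]$ with $\Pt(\mb)\vac_\la\equiv b(\mb)\bmod q\Lt[\la]$; and, for a suitable partial order $\preceq$ on the index set with finite down-sets (necessarily comparing only elements of the same $\theta$-weight, since $-$ preserves $\theta$-weight), the unitriangularity of the bar operator, $\ol{\Pt(\mb)\vac_\la}\in\Pt(\mb)\vac_\la+\sum_{\mb'\prec\mb}\A\,\Pt(\mb')\vac_\la$. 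Indeed, the Kazhdan--Lusztig-type lemma used in \cite{K} then yields, for each $\mb$, a unique bar-invariant $\Gth^\lw(b(\mb))\in\Pt(\mb)\vac_\la+\sum_{\mb'\prec\mb}q\Q[q]\,\Pt(\mb')\vac_\la$; unitriangularity over the $\A$-basis places it in $\Vt[\la]^\lw_\A$, and over the $\A_0$-basis and its bar-image places it in $\Lt[\la]$ and in $\Lt[\la]^-$, so $\Gth^\lw(b)\in E$ and $\Gth^\lw(b)\equiv b\bmod q\Lt[\la]$; consequently $\{\Gth^\lw(b)\}$ is simultaneously an $\A_0$-basis of $\Lt[\la]$, an $\A_\infty$-basis of $\Lt[\la]^-$, an $\A$-basis of $\Vt[\la]^\lw_\A$ and a $\K$-basis of $\Vt[\la]$, which gives the balancedness.

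Thus the work splits into (I) constructing the PBW-type basis $\{\Pt(\mb)\vac_\la\}$ with the mod-$q$ normalization, and (II) proving the bar operator unitriangular on it. For $\g=\gl_\infty$ and $\la=0$ this is what the body of the paper does: $\Pt(\mb)\vac$ is the PBW basis indexed by $\theta$-restricted multisegments, (I) is part of Theorem~\ref{main:cr}, and (II) is read off from the explicit formulas for the actions of $E_i$ and $F_i$ on $\{\Pt(\mb)\vac\}$. For general dominant $\theta$-fixed $\la$ one would proceed along the same lines: define $\Pt(\mb)\vac_\la$ by ordered monomials in suitable ``$\theta$-root vectors'' applied to $\vac_\la$ in $\Bt[\g]$, compute their $E_i$- and $F_i$-actions using the commutation relation \eqref{eq:Ea}, relation~(iv) of Definition~\ref{def:Bt}, and the nondegenerate pairing between $\Uf\vac'_\la$ and $\Uf\vac''_\la$ arising in the construction of $\Vt[\la]$, and deduce from these both (I) (via the crystal-basis criterion, as for $\la=0$) and (II).

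The \emph{main obstacle} is precisely (II)---with (I) close behind---for general $\la$: the whole argument rests on controlling the bar operator on a PBW-type basis, and for an arbitrary Kac--Moody $\g$ there is no ambient PBW basis to borrow, so it must be built by hand. An alternative that avoids the PBW input is to transport Kashiwara's weight-inductive construction of the global basis directly to $\Bt[\g]$: for $b\neq\vac_\la$ pick $i$ with $\tE_ib\neq0$, set $n=\max\{m:\tE_i^mb\neq0\}$ and $b_0=\tE_i^nb$, so that $\tE_ib_0=0$ and $\wt_\theta(b_0)=\wt_\theta(b)+n(\al_i+\al_{\theta(i)})$ is strictly higher, whence $\Gth^\lw(b_0)$ is available by induction on $\theta$-weight; since $-$ does not commute with $\tF_i$, one cannot simply use $\tF_i^n\Gth^\lw(b_0)$, and one instead isolates the $\Ker E_i$-component of $\Gth^\lw(b_0)$ in its $i$-string decomposition, applies $F_i^{(n)}$, and removes the resulting error by subtracting $\Q$-multiples of already-built lower global basis vectors of the same $\theta$-weight but longer $\tE_i$-string, iterating; finiteness of $\dim\Vt[\la]_\mu$ makes this secondary induction terminate, and organising it so that no cleared term is reintroduced is the delicate point of this route. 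Whichever way is taken, the remaining items---uniqueness of $\Gth^\lw(b)$, the four module isomorphisms, and the $\A_\infty$-side statements via $\ol{\A_0}=\A_\infty$---are formal.
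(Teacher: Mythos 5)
Your proposal correctly recognizes that the paper establishes this conjecture only in the special case $\g=\gl_\infty$, $\la=0$ (Theorem~\ref{th:gl}), and the reduction you lay out---bar-stability of $\Vt_\A=\Uf_\A\vac$, a PBW-type basis $\{\Pt(\mb)\vac\}$ serving simultaneously as an $\A$-basis of $\Vt_\A$ and an $\A_0$-basis of $\Lt$ (Theorem~\ref{main:cr} and the lemma preceding Proposition~\ref{barpp}), bar-unitriangularity with respect to $\ltcr$, and the standard Kazhdan--Lusztig-type triangularity lemma---is exactly the paper's route. One small slip in attribution: your item (II), the triangularity of the bar involution, is not ``read off from the explicit formulas for the actions of $E_i$ and $F_i$''; those formulas (Theorems~\ref{th:F-k}, \ref{th:E-k}, \ref{th:EF}) feed the crystal-basis criterion (Theorem~\ref{th:crcr}) and hence your item (I), whereas (II) is Proposition~\ref{barpp}, which the paper proves by descending the ambient bar-triangularity of the ordinary PBW basis of $\Uf[\gl_\infty]$ (Proposition~\ref{prop:ut}) to $\Vt$ via a weight and ordering argument. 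Your alternative weight-inductive construction for general $\la$ is not pursued in the paper; as you say, making the secondary induction terminate without reintroducing cleared terms is exactly the delicate point, and the conjecture remains open beyond $\g=\gl_\infty$, $\la=0$.
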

Its dual version is as follows.

Let us denote by $\Vt[\la]^{\upper}_\A$
the dual space $\set{v\in\Vt[\la]}{(\Vt[\la]^{\lw}_\A,v)\subset\A}$.
Then Conjecture~\ref{conj:bal} is equivalent to the following conjecture.
\begin{conj}\label{conj:gls}
$(L_\theta(\la),c(L_\theta(\la)),\Vt[\la]^\upper_\A)$
is balanced.
\end{conj}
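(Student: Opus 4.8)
The plan is to deduce Conjecture~\ref{conj:gls} from Conjecture~\ref{conj:crystal} together with Conjecture~\ref{conj:bal}; for $\g=\gl_\infty$ and $\la=0$ the latter two are Theorems~\ref{main:cr} and~\ref{th:gl}, so the argument is then unconditional. Throughout write $\Gth^\upper(b)$ ($b\in\CB_\theta(\la)$) for the basis of $\Vt[\la]$ dual to the lower global basis $\{\Gth^\lw(b)\}_{b}$ with respect to the non-degenerate symmetric form $(\scbul,\scbul)$, so that $(\Gth^\lw(b),\Gth^\upper(b'))=\delta_{bb'}$. I would show that $\bigl(L_\theta(\la),c(L_\theta(\la)),\Vt[\la]^\upper_\A\bigr)$ is balanced by computing $E\seteq L_\theta(\la)\cap c(L_\theta(\la))\cap\Vt[\la]^\upper_\A$ explicitly and checking that $E\to L_\theta(\la)/\qs L_\theta(\la)$ is an isomorphism.

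First I would record the basic properties of $\{\Gth^\upper(b)\}$. By Theorem~\ref{th:gl}(ii), $\Vt[\la]^\lw_\A=\soplus_b\A\,\Gth^\lw(b)$ and $L_\theta(\la)=\soplus_b\A_0\,\Gth^\lw(b)$. By the lemma following Conjecture~\ref{conj:crystal}, $(\scbul,\scbul)$ restricts to a perfect $\A_0$-pairing on $L_\theta(\la)$, i.e.\ $L_\theta(\la)=\set{v\in\Vt[\la]}{(L_\theta(\la),v)\subset\A_0}$, and the induced $\Q$-form $(\scbul,\scbul)_0$ on $L_\theta(\la)/\qs L_\theta(\la)$ has $\CB_\theta(\la)$ as an orthonormal basis. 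Dualizing these perfect pairings along $\{\Gth^\lw(b)\}$, one gets that $\{\Gth^\upper(b)\}$ is an $\A$-basis of $\Vt[\la]^\upper_\A=\set{v}{(\Vt[\la]^\lw_\A,v)\subset\A}$ and an $\A_0$-basis of $L_\theta(\la)$, and from $(\Gth^\upper(b),\Gth^\lw(b'))_0=\delta_{bb'}$ together with the orthonormality of $\CB_\theta(\la)$ one gets $\Gth^\upper(b)\equiv b\bmod \qs L_\theta(\la)$.

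Next I would pin down the semilinear operator $c$ of the statement. Using the bar operator $-$ of $\Vt[\la]$ and the non-degeneracy of $(\scbul,\scbul)$ — together with the finite-dimensional $\theta$-weight decomposition of Remark~\ref{rem:tweight}, which is preserved by $-$ — one obtains a unique map $c$ on $\Vt[\la]$, semilinear over the bar involution of $\K$, characterized by $(w,c(v))=\ol{(\ol{w},v)}$ for all $v,w$; a short check gives $c^2=\id$. Then, using $\ol{\Gth^\lw(b)}=\Gth^\lw(b)$ from Theorem~\ref{th:gl}(i),
\[
(\Gth^\lw(b),c(\Gth^\upper(b')))=\ol{(\ol{\Gth^\lw(b)},\Gth^\upper(b'))}=\ol{(\Gth^\lw(b),\Gth^\upper(b'))}=\delta_{bb'}=(\Gth^\lw(b),\Gth^\upper(b')),
\]
so $c(\Gth^\upper(b'))=\Gth^\upper(b')$ by non-degeneracy. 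Since $\ol{\A_0}=\A_\infty$ and $c$ is semilinear, $c(L_\theta(\la))=\soplus_b\A_\infty\,\Gth^\upper(b)$. Combining with the previous paragraph,
\[
E=\Bigl(\soplus_b\A_0\,\Gth^\upper(b)\Bigr)\cap\Bigl(\soplus_b\A_\infty\,\Gth^\upper(b)\Bigr)\cap\Bigl(\soplus_b\A\,\Gth^\upper(b)\Bigr)=\soplus_b\Q\,\Gth^\upper(b),
\]
and $E\to L_\theta(\la)/\qs L_\theta(\la)$ sends $\Gth^\upper(b)\mapsto b$, a bijection onto $\CB_\theta(\la)$; as $L_\theta(\la)$, $c(L_\theta(\la))$ and $\Vt[\la]^\upper_\A$ each generate $\Vt[\la]$ over $\K$, this is condition (i) in the definition of balanced, so the triple is balanced and Conjecture~\ref{conj:gls} holds. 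Read in the other direction the same computation recovers the equivalence of Conjectures~\ref{conj:bal} and~\ref{conj:gls} and exhibits $\{\Gth^\upper(b)\}$ as the upper global basis.

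The structural part above is routine once Conjectures~\ref{conj:crystal} and~\ref{conj:bal} are available, so the genuine obstacle lies entirely in those two statements — equivalently, for $\g=\gl_\infty$, $\la=0$, in Theorems~\ref{main:cr} and~\ref{th:gl} — which depend on constructing the PBW-type basis $\{\Pt(\mb)\vac\}_{\mb}$ and on the coefficient estimates for the $E_i$- and $F_i$-actions. Within the reduction itself the delicate inputs are the self-duality $L_\theta(\la)=\set{v}{(L_\theta(\la),v)\subset\A_0}$ and the mod-$\qs$ orthonormality of $\CB_\theta(\la)$, since these are exactly what force $\Gth^\upper(b)\equiv b$ and make $E$ meet the crystal basis; for general $\la$ or general $\g$ the statement remains conditional precisely because Conjectures~\ref{conj:crystal}–\ref{conj:bal} are.
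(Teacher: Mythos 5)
Your argument is correct. Since \texttt{conj:gls} is stated in the paper only as a conjecture (declared equivalent to \texttt{conj:bal} without a written-out proof, and then proved for $\g=\gl_\infty$, $\la=0$ only implicitly through Theorem~\ref{th:gl} and that equivalence), there is no explicit proof of this statement in the paper to compare against; your write-up supplies the missing implication \texttt{conj:bal}$\;\Rightarrow\;$\texttt{conj:gls} in full detail. The key ingredients you use are precisely the ones the paper sets up for this purpose: the lemma following Conjecture~\ref{conj:crystal} giving the self-duality $\Lt[\la]=\set{v}{(\Lt[\la],v)\subset\A_0}$ and mod-$q$ orthonormality of $\CB_\theta(\la)$, the balanced decomposition of $(\Lt[\la],\Lt[\la]^-,\Vt[\la]^\lw_\A)$ into $\soplus_b\A_0\Gth^\lw(b)$, $\soplus_b\A_\infty\Gth^\lw(b)$, $\soplus_b\A\Gth^\lw(b)$, and the characterization $(c(v'),v)=\ol{(v',\bar v)}$ of $c$, which you correctly recast as $(w,c(v))=\ol{(\ol w,v)}$ by symmetry of the form. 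Dualizing along $\{\Gth^\lw(b)\}$ to get $\Lt[\la]=\soplus_b\A_0\Gth^\upper(b)$, $c(\Lt[\la])=\soplus_b\A_\infty\Gth^\upper(b)$, $\Vt[\la]^\upper_\A=\soplus_b\A\Gth^\upper(b)$, and showing $\Gth^\upper(b)\equiv b\pmod{q\Lt[\la]}$ and $c(\Gth^\upper(b))=\Gth^\upper(b)$, is exactly what forces $E=\soplus_b\Q\,\Gth^\upper(b)$ and hence balancedness. This is the reduction the paper leaves implicit, and your proof of it is sound; your closing remark that $\{\Gth^\upper(b)\}$ is thereby identified with the paper's upper global basis (defined there as the inverse image of the isomorphism asserted in \texttt{conj:gls}) is also correct. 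One small point worth noting: you take for granted that the $c$ you define via $(w,c(v))=\ol{(\ol w,v)}$ agrees with the paper's $c$ (defined by $c(\vac_\la)=\vac_\la$, semilinearity, and $cE_i=E_ic$); this is fine because the paper records that its $c$ satisfies the displayed formula and, by non-degeneracy on finite-dimensional $\theta$-weight spaces, the formula determines $c$ uniquely, but it would be cleaner to say this explicitly rather than re-deriving existence of a $c$ that might a priori be a different operator.
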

Here $c$ is a unique endomorphism of $\Vt[\la]$ such that
$c(\vac_\la)=\vac_\la$ and $c(av)=\bar{a}c(v)$,
$c(E_iv)=E_ic(v)$ for any $a\in\K$ and $v\in \Vt[\la]$.
We have $(c(v'),v)=\ol{(v',\bar{v})}$ for any $v,v'\in \Vt[\la]$.
	
Note that
$\Vt[\la]^\upper_\A$ is the largest $\A$-submodule $M$ of $\Vt[\la]$ such that
$M$ is invariant by the $E_i^{(n)}$'s and
$M\cap\K \vac_\la=\A\vac_\la$.

By Conjecture~\ref{conj:gls}, 
$\Lt[\la]\cap c(\Lt[\la])\cap\Vt^\upper_\A\to\Lt[\la]/q\Lt[\la]$
is an isomorphism. Let $\Gth^\upper$ be its inverse.
Then $\{\Gth^\upper(b)\}_{b\in\Bs}$ is a basis of $\Vt[\la]$, which we call
the {\em upper global basis} of $\Vt[\la]$.
Note that $\{\Gth^\upper(b)\}_{b\in\Bs}$ is the dual basis
to $\{\Gth^\lw(b)\}_{b\in\Bs}$ with respect to the inner product of
$\Vt[\la]$.

We shall prove these conjectures in the case $\g=\gl_\infty$ and $\la=0$.

\section{PBW basis of $V_\theta(0)$ for $\mf{g}=\mf{gl}_{\infty}$}

\subsection{Review on the PBW basis}
In the sequel, we set $I=\Z_{\text{odd}}$ and 
\[
(\alpha_i,\alpha_j)=\left\{
\begin{array}{ll}
2 & \text{for $i=j$,} \\
-1 & \text{for $j=i \pm 2$,}\\
0 & \text{otherwise,}
\end{array}
\right.
\]
and we consider the corresponding
quantum group $\U[\gl_{\infty}]$. In this case,
we have $q_i=q$. We write
$[n]$ and $[n]!$ for $[n]_i$ and $[n]_i!$ for short.

We can parametrize the crystal basis $\CB(\infty)$ by the multisegments. 
We shall recall this parametrization and the PBW basis.

\begin{dfn}
For $i,j\in I$ such that $i\le j$, we define a segment $\pbw{i,j}$ 
as the interval $[i,j] \subset I\seteq\Z_{\text{odd}}$. 
A multisegment is a formal finite sum of segments:
\[
\mb=\sum_{i \le j}{m_{ij}}\lr{i}{j}
\]
with $m_{i,j}\in\Z_{\ge0}$.
We call $m_{ij}$ the multiplicity of a segment $\lr{i}{j}$. 
If $m_{i,j}>0$, we sometimes say that
$\pbw{i,j}$ appears in $\mb$.
We sometimes write
$m_{i,j}(\mb)$ for $m_{i,j}$.
We write sometimes $\pbw{i}$ for $\pbw{i,i}$.
We denote by $\mathcal{M}$ the set of multisegments.
We denote by $\emptyset$ the zero element \ro or the empty multisegment\/\rf\ 
of $\M$.
\end{dfn}
\begin{dfn}
For two segments $\lr{i_1}{j_1}$ and $\lr{i_2}{j_2}$, we define the ordering $\ge_{\text{PBW}}$ by the following:
\[
\lr{i_1}{j_1} \ge_{\text{PBW}} \lr{i_2}{j_2} \Longleftrightarrow \left\{
\begin{array}{l}
j_1>j_2 \\
\text{or} \\
j_1=j_2 \ \text{and} \ i_1 \ge i_2.
\end{array}
\right.
\]
We call this ordering the {\em PBW-ordering}.
\end{dfn}
\begin{dfn}
For a multisegment $\mb$,
we define the element $P(\mb) \in U_q^-(\mf{gl}_{\infty})$
as follows.
\be[{\rm(1)}]
\item
For a segment $\lr{i}{j}$, we define the element $\lr{i}{j}\in\Uf[\gl_\infty]$ inductively by
\begin{eqnarray*}
\lr{i}{i}&=&f_i, \\
\lr{i}{j}&=&\lr{i}{j-2}\lr{j}{j}-q\lr{j}{j}\lr{i}{j-2}\quad\text{for $i<j$.}
\end{eqnarray*}
\item
For a multisegment $\displaystyle \mb=\sum_{i \le j}m_{ij}\lr{i}{j}$, we define
\[
P(\mb)=\mathop{\overrightarrow{\prod}}
\lr{i}{j}^{(m_{ij})}.
\]
Here the product $\overrightarrow{\prod}$ 
is taken over segments appearing in $\mb$ 
from large to small with respect to the PBW-ordering.
The element $\lr{i}{j}^{(m_{ij})}$ is the divided power of $\lr{i}{j}$ i.e. 
\[
\lr{i}{j}^{(n)}=
\begin{cases}\dfrac{1}{[n]!}\lr{i}{j}^{n}&\text{for $n>0$,}\\
1&\text{for $n=0$,}\\
0&\text{for $n<0$.}
\end{cases}
\]
\ee
\end{dfn}
Hence the weight of $P(\mb)$ is equal to
$\wt(\mb)\seteq-\ssum_{i\le k\le j}m_{i,j}\al_k$:
$t_iP(\mb)t_i^{-1}=q^{(\al_i,\wt(\mb))}P(\mb)$.

\begin{thm}[\cite{L}]
The set of elements $\set{P(\mb)}{\mb \in \mathcal{M}}$ 
is a $\K$-basis of $U_q^-(\mf{gl}_{\infty})$. 
Moreover this is an $\A$-basis of $U_q^-(\mf{gl}_{\infty})_\A$. 
We call this basis the {\em PBW basis} of $U_q^-(\mf{gl}_{\infty})$. 
\end{thm}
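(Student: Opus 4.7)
The plan is to prove the theorem by combining a straightening argument in the PBW ordering with a dimension count against the Kostant partition function, which is the classical template for PBW-type theorems in the quantum setting.

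First, I would establish $q$-commutation relations among the segment elements $\lr{i}{j}$. Working inductively from the definition $\lr{i}{j}=\lr{i}{j-2}\lr{j}{j}-q\lr{j}{j}\lr{i}{j-2}$ together with the Serre relations for the $f_i$'s, one verifies that for two segments with $\lr{i_1}{j_1}>_{\mathrm{PBW}}\lr{i_2}{j_2}$, the reversed product $\lr{i_2}{j_2}\lr{i_1}{j_1}$ can be rewritten as $q^{c}\lr{i_1}{j_1}\lr{i_2}{j_2}$ plus an $\A$-linear combination of PBW-ordered products of strictly smaller segments. The two cases to isolate are the \emph{unlinked} case (a pure $q$-commutation) and the \emph{linked} case where one segment ends just before the other begins, producing a correction involving the concatenated segment. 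Using these relations and induction on a well-founded refinement of the PBW order by total weight, any monomial in the $\lr{i}{j}$'s --- and in particular any monomial in the $f_i=\lr{i}{i}$'s --- can be rewritten as an $\A$-linear combination of the ordered monomials $P(\mb)$. Since the $f_i$'s generate $\Uf[\gl_\infty]_\A$ over $\A$, this shows $\{P(\mb)\}_{\mb\in\M}$ spans $\Uf[\gl_\infty]_\A$ over $\A$, hence also spans $\Uf[\gl_\infty]$ over $\K$.

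For linear independence I would use a dimension count in each weight space. The positive roots of $\gl_\infty$ are in bijection with segments via $\lr{i}{j}\leftrightarrow\al_i+\al_{i+2}+\cdots+\al_j$, so the number of $\mb\in\M$ with $\wt(\mb)=-\beta$ equals the Kostant partition function at $\beta$. Specializing $q\to 1$ and invoking the classical PBW theorem for $U(\mf{n}^-)$, this number also equals $\dim_\K\Uf[\gl_\infty]_{-\beta}$. Combined with the spanning step, this yields the $\K$-basis property, and since $\Uf[\gl_\infty]_\A$ is free over $\A$ of the expected rank in each weight space, the $\A$-basis property follows as well.

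The hardest step will be (i) pinning down the precise form of the straightening identities in the linked case --- the exact powers of $q$ and the signs that appear --- and (ii) choosing a well-founded refinement of the PBW order so that each straightening strictly decreases monomial complexity and the induction terminates. Once these are in hand, spanning, linear independence, and $\A$-integrality all follow by essentially combinatorial bookkeeping together with the specialization-to-$q=1$ argument.
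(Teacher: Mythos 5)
The paper itself does not prove this statement: it is quoted from Lusztig \cite{L}, where the PBW basis is obtained from the braid-group action and reduced words (with $\gl_\infty$ treated as a limit of the finite $A_n$ cases). Your straightening-plus-dimension-count strategy is the standard elementary alternative, and for the $\K$-basis claim it is sound: the straightening identities you need are exactly those the paper later establishes as Proposition~\ref{pp1} (the unlinked, linked, nested and overlapping cases), the correction terms strictly decrease a well-founded refinement of the PBW order (say, number of segments first, then inversions within a fixed weight space), and comparing the number of multisegments of weight $\beta$ with $\dim\Uf[\gl_\infty]_{-\beta}$ via the Kostant partition function closes the argument.

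The $\A$-basis part, however, has a genuine gap. First, the $f_i$'s do \emph{not} generate $\Uf[\gl_\infty]_\A$ over $\A$: by definition $\Uf_\A$ is generated by the divided powers $f_i^{(n)}=f_i^n/[n]!$, and $[n]!$ is not invertible in $\A=\Q[q,q^{-1}]$. So spanning over $\A$ requires straightening arbitrary products $f_{i_1}^{(n_1)}\cdots f_{i_r}^{(n_r)}$ of divided powers into $\A$-combinations of the $P(\mb)$'s, which needs divided-power versions of the commutation relations with $q$-binomial coefficients (compare Lemma~\ref{lem1} and Proposition~\ref{prop:div}), together with the separate fact that $\lr{i}{j}^{(m)}\in\Uf_\A$. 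Second, your closing inference is invalid as stated: knowing that $\Uf_\A$ is free over $\A$ of the expected rank in each weight space does not imply that a linearly independent subset of that cardinality contained in it is an $\A$-basis --- a proper full-rank submodule such as $\sum_{\mb}\A\,(q-1)P(\mb)$ has the same rank. One must prove both inclusions $\soplus_{\mb}\A P(\mb)\subseteq\Uf_\A$ and $\Uf_\A\subseteq\soplus_{\mb}\A P(\mb)$ separately, the latter being precisely the divided-power straightening above.
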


\begin{dfn}
For two segments $\lr{i_1}{j_1}$ and $\lr{i_2}{j_2}$, 
we define the ordering $\ge_{\text{cry}}$ by the following:
\[
\lr{i_1}{j_1} \ge_{\text{cry}} \lr{i_2}{j_2} \Longleftrightarrow \left\{
\begin{array}{l}
j_1>j_2 \\
\text{or} \\
j_1=j_2 \ \text{and} \ i_1 \le i_2.
\end{array}
\right.
\]
We call this ordering the {\em crystal ordering}.
\end{dfn}

\begin{exa}
The crystal ordering is different from the PBW-ordering. For example, 
we have $\lr{-1}{1}>_{\text{cry}}\lr{1}{1}>_{\text{cry}}\lr{-1}{-1}$,
while we have $\lr{1}{1}>_{\text{PBW}}\lr{-1}{1}>_{\text{PBW}}\lr{-1}{-1}$.
\end{exa}
\begin{dfn}\label{defKop}
We define the crystal structure on $\mathcal{M}$ as follows:
for $\mb=\sum m_{i,j}\pbw{i,j}\in\mathcal{M}$ and $i\in I$,
set $A_k^{(i)}(\mb)=\sum_{k'\ge k}(m_{i,k'}-m_{i+2,k'+2})$ for $k\ge i$.
Define $\eps_i(\mb)$ as $\max\set{A_k^{(i)}(\mb)}{k\ge i}\ge0$.
\bnum
\item If $\eps_i(\mb)=0$, then define
$\te_i(\mb)=0$.
If $\eps_i(\mb)>0$, let
$k_e$ be the largest $k\ge i$
such that $\eps_i(\mb)=A_k^{(i)}(\mb)$
and define
$\te_i(\mb)=\mb-\pbw{i,k_e}+\delta_{k_e\not=i}\pbw{i+2,k_e}$.
\item
Let $k_f$ be the smallest $k\ge i$
such that $\eps_i(\mb)=A_k^{(i)}(\mb)$
and define
$\tf_i(\mb)=\mb-\delta_{k_f\not=i}\pbw{i+2,k_f}+\pbw{i,k_f}$.
\enum
\end{dfn}
\begin{rem}
For $i \in I$,
the actions of the operators $\widetilde{e}_i$ and $\widetilde{f}_i$ on 
$\mb\in\mathcal{M}$ 
are also described by the following algorithm: 

\be[{Step 1.}]
\item Arrange the segments in $\mb$ in the crystal ordering. 
\item
For each segment $\lr{i}{j}$, write $-$, 
and for each segment $\lr{i+2}{j}$, write $+$. 
\item In the resulting sequence of $+$ and $-$, delete a subsequence of the form $+-$ and keep on deleting until no such subsequence remains. 
\ee
Then we obtain a sequence of the form $-- \cdots -++ \cdots +$. 

\be[{\rm(1)}]
\item
$\varepsilon_i(\mb)$ is the total number of $-$ 
in the resulting sequence. 
\item $\widetilde{f}_i(\mb)$ is given as follows:
\be[{(a)}]
\item
if the leftmost $+$ corresponds to a segment $\lr{i+2}{j}$, 
then replace it with $\lr{i}{j}$,

\item if no $+$ exists, add a segment $\lr{i}{i}$ to $\mb$.
\ee

\item $\widetilde{e}_i(\mb)$  is given as follows:

\be[{(a)}]
\item
if the rightmost $-$ corresponds to a segment $\lr{i}{j}$, 
then replace it with $\lr{i+2}{j}$,
\item if no $-$ exists, then $\widetilde{e}_i(\mb)=0$. 
\ee
\ee
\end{rem}

Let us introduce a linear ordering on the set $\M$ of multisegments,
lexicographic with respect to the crystal ordering on the set of segments.
\begin{dfn}
For $\mb=\sum_{i\le j}m_{i,j}\pbw{i,j}\in \M$ and
and $\mb'=\sum_{i\le j}m'_{i,j}\pbw{i,j}\in \M$,
we define $\mb'\ltcr\mb$
if there exist $i_0\le j_0$ such that
$m'_{i_0,j_0}<m_{i_0,j_0}$,
$m'_{i,j_0}=m_{i,j_0}$ for $i<i_0$, and
$m'_{i,j}=m_{i,j}$ for $j>j_0$ and $i\le j$.
\end{dfn}
\begin{thm}\label{thmgl}
\bnum
\item
 $\displaystyle L(\infty)=\soplus_{\mb \in \mathcal{M}}\mathbf{A}_0P(\mb)$. 
\item
$\CB(\infty)
=\set{P(\mb) \mod qL(\infty)}{\mb \in \M}$. 
\item
We have
\begin{eqnarray*}
\widetilde{e}_iP(\mb) &\equiv& 
P(\widetilde{e}_i(\mb)) \quad \mod{qL(\infty)}, \\
\widetilde{f}_iP(\mb) &\equiv& P(\widetilde{f}_i(\mb)) 
\quad \mod{qL(\infty)}.
\end{eqnarray*}
Note that $\widetilde{e}_i$ and $\widetilde{f}_i$ 
in the left-hand-side is the modified root operators.
\item
We have
\[
\overline{P(\mb)}
\in P(\mb)+\sum_{\mb'\ltcr \mb}\A P(\mb').
\]
\ee
\end{thm}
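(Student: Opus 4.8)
The plan is to prove $\overline{P(\mb)}\in P(\mb)+\sum_{\mb'\ltcr\mb}\A P(\mb')$ by induction on the multisegment $\mb$ with respect to the linear ordering $\ltcr$, reducing everything to the elementary commutation relations among the segment elements $\pbw{i,j}$. First I would record the behaviour of the bar involution on a single segment: since $\overline{f_i}=f_i$ and $\bar q=q^{-1}$, the defining recursion $\pbw{i,j}=\pbw{i,j-2}\pbw{j}-q\pbw{j}\pbw{i,j-2}$ gives $\overline{\pbw{i,j}}=\pbw{i,j-2}\pbw{j}-q^{-1}\pbw{j}\pbw{i,j-2}=\pbw{i,j}+(q-q^{-1})\pbw{j}\pbw{i,j-2}$, and more generally $\overline{\pbw{i,j}}=\pbw{i,j}+\sum(\text{terms that are products of strictly shorter segments, all }\ltcr\pbw{i,j})$. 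This already handles the case of a multisegment consisting of a single segment with multiplicity one.

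Next I would pass to divided powers and products. For a divided power $\pbw{i,j}^{(m)}$ the bar involution is computed from the one-segment case and the fact that $\overline{[m]!}=(-1)^{\text{something}}[m]!$ up to a unit, so $\overline{\pbw{i,j}^{(m)}}$ lies in $\pbw{i,j}^{(m)}+\sum_{\mb'\ltcr m\pbw{i,j}}\A P(\mb')$; here one uses that replacing $\pbw{i,j}$ by a product of shorter segments, inside an otherwise fixed configuration, strictly lowers the multisegment in the $\ltcr$-ordering (this is exactly what the definition of $\ltcr$, which looks first at the largest $j_0$ and then at the smallest $i_0$, is designed to detect). For the full product $P(\mb)=\overrightarrow{\prod}\pbw{i,j}^{(m_{ij})}$, apply $-$ factor by factor. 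The bar of the product is the product of the bars, each of which equals the original factor plus lower-order corrections. Expanding the product, the leading term reproduces $P(\mb)$ but written in the \emph{wrong} order (the PBW product is decreasing in $\ge_{\text{PBW}}$, whereas multiplying out $\overline{P(\mb)}$ naively may scramble factors); and the remaining terms involve at least one factor replaced by shorter segments.

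The main obstacle — and the step deserving the most care — is the straightening: rewriting an arbitrary monomial in the $\pbw{i,j}$'s back into the PBW basis $\{P(\mb')\}$ and checking that everything except the intended leading term $P(\mb)$ is $\ltcr\mb$. For this I would establish a straightening lemma: for two segments with $\pbw{i_1,j_1}\ge_{\text{PBW}}\pbw{i_2,j_2}$, the product $\pbw{i_2,j_2}\pbw{i_1,j_1}$ equals $q^{\pm(\al,\al)}\pbw{i_1,j_1}\pbw{i_2,j_2}$ plus a $\Z[q,q^{-1}]$-combination of $P(\mb')$ with $\mb'\ltcr\pbw{i_1,j_1}+\pbw{i_2,j_2}$, where the correction terms come from "overlapping/linking" of the two segments and always produce either longer-but-fewer or appropriately shorter segments that are strictly smaller in $\ltcr$. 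Granting this lemma, a standard induction on the number of inversions straightens any monomial, the $\ltcr$-minimality of all non-leading contributions is preserved under multiplication by a fixed PBW factor, and the bar-invariance plus the computed leading coefficient $1$ finish the proof. I expect the linking computation in the straightening lemma to be the genuinely technical point; everything else is bookkeeping with the $\ltcr$-ordering, whose definition is tailored precisely so that "trading one long segment for several shorter ones, or for one longer and one shorter" goes downward.
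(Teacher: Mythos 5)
Your overall plan---bar each factor and re-expand in the PBW basis, controlling everything via $\ltcr$---is the same as the paper's (the lemma on $\ol{\pbw{a,b}^{(n)}}$ followed by Proposition~\ref{prop:ut}). But the ``straightening lemma'' you appeal to is false as stated. Consider the commutation relation $\pbw{i,j}\pbw{j+2,k}=\pbw{i,k}+q\,\pbw{j+2,k}\pbw{i,j}$ with $i\le j<k$, so $\pbw{j+2,k}\ge_{\mathrm{PBW}}\pbw{i,j}$. Straightening $\pbw{i,j}\pbw{j+2,k}$ gives $q\,P(\pbw{j+2,k}+\pbw{i,j})+P(\pbw{i,k})$, and the correction term $\pbw{i,k}$ is $\ltcr$-\emph{larger}, not smaller, than $\pbw{j+2,k}+\pbw{i,j}$: at the top level $j_0=k$ the multisegment $\pbw{i,k}$ has a segment with the smaller left endpoint $i$, whereas $\pbw{j+2,k}+\pbw{i,j}$ has only $\pbw{j+2,k}$, and by the definition of $\ltcr$ this makes $\pbw{j+2,k}+\pbw{i,j}\ltcr\pbw{i,k}$. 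The same thing happens in $\pbw{i,k}\pbw{j,\ell}=\pbw{j,\ell}\pbw{i,k}+(q^{-1}-q)\pbw{i,\ell}\pbw{j,k}$ for $i<j\le k<\ell$: the term $\pbw{i,\ell}+\pbw{j,k}$ is $\ltcr$-larger than $\pbw{j,\ell}+\pbw{i,k}$. So ``linking two shorter segments into one longer one'' moves you \emph{up} in $\ltcr$, contrary to what your proposal asserts.

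The paper's proof never has to straighten in that dangerous direction. The key computation $\ol{\pbw{a,b}}\in\pbw{a,b}+\sum_{a+2\le k\le b}\pbw{k,b}\,\Uf$ says the correction already has a top-level segment $\pbw{k,b}$ with $k>a$ \emph{on the left}, followed only by lower-level material; the double induction in Proposition~\ref{prop:ut} (on the top level $b$, then on the number of level-$b$ segments in $\mb_b$) is organized so that in every product that must be expanded, all level-$b$ segments stand to the left of everything at levels $<b$. The bad linking that would create a new level-$b$ segment with a smaller left endpoint would require commuting a lower-level segment leftward past a level-$b$ one; this configuration simply never arises, and that is precisely the extra structural invariant your argument is missing. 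Because you instead invoke a blanket straightening lemma that is false, the proof proposal has a genuine gap exactly at the step you yourself flagged as ``the genuinely technical point.'' (Minor aside: $\ol{[m]!}=[m]!$ on the nose, and the bar involution does not scramble the PBW order of factors since it fixes each $f_i$; so the ``wrong order'' concern you raise for the leading term does not actually occur.)
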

Therefore we can index the crystal basis by multisegments.
By this theorem we can easily see by a standard argument that
$(L(\infty),L(\infty)^-,\Uf_\A)$ is balanced,
and there exists a unique $\Gl^\lw(\mb)\in L(\infty)\cap\Uf_\A$ 
such that $\Gl^\lw(\mb)^{-}=\Gl^\lw(\mb)$ and 
$\Gl^\lw(\mb)\equiv P(\mb) \mod{qL(\infty)}$.
The basis $\{\Gl^\lw(\mb)\}_{\mb\in\M}$ is a lower global basis.

\subsection{$\theta$-restricted multisegments}

We consider 
the Dynkin diagram involution $\theta$ of $I$ defined by 
$\theta(i)=-i$ for $i \in I=\Z_{\text{odd}}$. 
{\scriptsize$$
\xymatrix@R=.8ex@C=3ex{
\cdots\cdots\ar@{-}[r]&\circ\ar@{-}[r]
\ar@/^2.5pc/@{<->}[rrrrr]^\theta
&\circ\ar@{-}[r]\ar@/^1.5pc/@{<->}[rrr]&\circ\ar@{-}[r]
\ar@/^.6pc/@{<->}[r]&
\circ\ar@{-}[r]&\circ\ar@{-}[r]&\circ\ar@{-}[r]&\cdots\cdots\ .\\
&-5&-3&-1&\;1\;&\;3\;&\;5\;
}
$$}
We shall prove in this case Conjectures \ref{conj:crystal}
and \ref{conj:bal} for $\la=0$ (Theorems~\ref{main:cr} and \ref{th:gl}).

We set
\eqn
\tVt&\seteq&\Btg/
\ssum_{i\in I}\bl(\Btg E_i+\Btg(T_i-1)+\Btg(F_i-F_{\theta(i)})\br)\\
&\simeq&
U_q^-(\mf{gl}_{\infty})/\ssum_{i}U_q^-(\mf{gl}_{\infty})(f_i-f_{\theta(i)}).
\eneqn
Let $\tvac$ be the generator of $\tVt$ corresponding to $1\in\Btg$.
Since $F_i\vac_{0}''=(f_i+f_{\theta(i)})\vac_0''=F_{\theta(i)}\vac_0''$,
we have an epimorphism of $\Btg$-modules
\eq
&&\tVt\epi V_\theta(0).
\eneq

We shall see later that it is in fact an isomorphism
(see Theorem~\ref{main:cr}).

\begin{dfn}
If a multisegment $\mb$ has the form
\[
\mb=\sum_{-j \le i \le j}m_{ij}\lr{i}{j},
\]
we call $\mb$ a {\em $\theta$-restricted} multisegment. 
We denote by $\mathcal{M}_\theta$ the set of $\theta$-restricted multisegments.
\end{dfn}

\begin{dfn}
 For a $\theta$-restricted segment $\pbw{i,j}$,
we define its modified divided power by
\[
{\lr{i}{j}}^{\dv{m}}=\left\{
\begin{array}{ll}
\pbw{i,j}^{(m)}=\dfrac{1}{[m]!}\lr{i}{j}^m & (i \neq -j), \\
\dfrac{1}{\prod_{\nu=1}^{m}[2\nu]}\lr{-j}{j}^m & (i=-j).
\end{array}
\right.
\]
\end{dfn}
We understand that
$\pbw{i,j}^{\dv{m}}$ is equal to $1$ for $m=0$ and vanishes for $m<0$.

\begin{dfn}
For $\mb \in \mathcal{M}_\theta$, 
we define the element $P_\theta(\mb) \in \Uf[\gl_\infty]\subset\Btg$ by 
\[
P_\theta(\mb)=\mathop{\overrightarrow{\prod}}\limits
_{\lr{i}{j} \in \mb}\lr{i}{j}^{\dv{m_{ij}}}.
\]
Here the product $\overrightarrow{\prod}$ is taken 
over the segments
appearing in $\mb$ from large to small
with respect to the PBW-ordering. 

 If an element $\mb$ of the free abelian group generated by
$\pbw{i,j}$ does not belong to $\Mt$, we understand
$\Pt(\mb)=0$.
\end{dfn}
We will prove later
that $\{P_\theta(\mb)\vac\}_{\mb \in \Mt}$ 
is a basis of $\Vt$ (see Theorem~\ref{main:cr}). 
Here and hereafter, we write $\vac$ instead of $\vac_0\in\Vt$.

\subsection{Commutation relations of $\lr{i}{j}$}
In the sequel, we regard
$\Uf[\gl_\infty]$ as a subalgebra of $\Btg$
by $f_i\mapsto F_i$.

We shall give formulas to express products of segments
by a PBW basis.
\begin{prop}\label{pp1} 
For $i,j,k,l\in I$, we have
\begin{enumerate}[{\rm(1)}]
\item
$\pbw{i,j}\pbw{k,\ell}=\pbw{k,\ell}\pbw{i,j}$ for
$i\le j$, $k\le \ell$ and $j<k-2$,
\label{sub0}
\item
$\pbw{i,j}\pbw{j+2,k}=\pbw{i,k}+q\pbw{j+2,k}\pbw{i,j}$ for $i \le j<k$,
\label{sub1}
\item
$\lr{j}{k}\lr{i}{\ell}=
\lr{i}{\ell}\lr{j}{k}$ for $i<j \le k<\ell$,
\label{sub2}
\item
$\lr{i}{k}\lr{j}{k}=q^{-1}
\langle j,k \rangle \langle i,k \rangle$
for $i<j \le k$,
\label{sub3}
\item
$\langle i,j \rangle \langle i,k
\rangle=q^{-1}
\langle i,k \rangle \langle i,j \rangle$ for $i \le j<k$,
\label{sub4}
\item
$\langle i,k \rangle \langle
j,\ell
\rangle=\pbw{j,\ell}\pbw{i,k}+(q^{-1}-q) \langle i,\ell \rangle \langle j,k
\rangle$ for $i<j \le k<\ell$.
\label{sub5}
\end{enumerate}
\end{prop}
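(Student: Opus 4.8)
\emph{Proof proposal.} All six relations hold already inside the subalgebra of $U_q^-(\gl_\infty)$ generated by the finitely many $f_a$'s that occur in the segments, which is a $U_q^-$ of finite type $A$; so the only inputs are $f_af_b=f_bf_a$ for $|a-b|>2$ and the quantum Serre relation $f_a^2f_b-(q+q^{-1})f_af_bf_a+f_bf_a^2=0$ for $|a-b|=2$. The plan is to prove (1)--(6) simultaneously by induction on the total span of the pair of segments --- the length of the smallest interval of $I$ containing all the endpoints involved --- peeling at each step the largest or smallest generator off one segment by means of $\pbw{i,j}=\pbw{i,j-2}f_j-qf_j\pbw{i,j-2}$ (or its left analogue below), rewriting the resulting products by instances of (1)--(6) of smaller span, and running a secondary induction where necessary.

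Two preliminaries cut down the work. First, the \emph{left recursion}
\[
\pbw{i,j}=f_i\,\pbw{i+2,j}-q\,\pbw{i+2,j}\,f_i\qquad(i<j)
\]
holds, by an easy induction on $j-i$ from the defining recursion together with $f_if_j=f_jf_i$ for $j>i+2$. Second, $f_a\mapsto f_{-a}$, $q\mapsto q$ extends to an anti-automorphism $\tau$ of $U_q^-(\gl_\infty)$ --- it visibly preserves the two relations above --- and one checks that $\tau\pbw{i,j}=\pbw{-j,-i}$; as $\tau$ interchanges relations (4) and (5), it suffices to prove (1), (2), (3), (5) and (6).

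The first three go quickly. Relation (1) is immediate, the two segments being polynomials in disjoint, mutually commuting sets of generators. Relation (2) is the defining recursion in disguise: its case $k=j+2$ is that recursion, and the general case follows by induction on $k$, expanding $\pbw{j+2,k}$, using (2) at $k-2$, the recursion for $\pbw{i,k-2}f_k$, and (1) to slide $\pbw{i,j}$ past $f_k$; a useful byproduct is that, once (2) holds, $\pbw{i,k}=\pbw{i,j}\pbw{j+2,k}-q\pbw{j+2,k}\pbw{i,j}$. Relation (5) I would derive by substituting the left recursion for $\pbw{i,j}$ and $\pbw{i,k}$, eliminating the leading $f_i$'s via (2) in the form $f_i\pbw{i+2,m}=\pbw{i,m}+q\pbw{i+2,m}f_i$, and closing with (5) at smaller span and the Serre relation (the smallest case $i=j$ of (5) reducing to a Serre relation).

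Relations (3) (nested pair of segments) and (6) (overlapping pair, with the correction term $(q^{-1}-q)\pbw{i,\ell}\pbw{j,k}$) are the heart of the matter, and I expect them to be the main obstacle. They are entangled: the correction term is a nested pair of the \emph{same} total span as the left-hand side, and a short calculation shows that the ``doubly minimal'' instances --- the overlapping pair $[j-2,\ell-2]$, $[j,\ell]$ and the nested pair $[j,\ell-2]\subset[j-2,\ell]$ --- are equivalent to one another modulo (4) and (5). I would therefore prove (3) and (6) together, running a secondary induction on the two overhang lengths $j-i$ and $\ell-k$: peeling $f_i$ off the left of $\pbw{i,k}$ settles all cases with left overhang $>2$ using only (6) at smaller span and trivial commutations, peeling $f_\ell$ off the right of $\pbw{j,\ell}$ settles right overhang $>2$, relation (3) reduces the same way, and the residual doubly minimal configurations are handled by a final induction whose base case is a direct computation with the Serre relations. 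The delicate points throughout are the exact cancellation of the various powers of $q$ attached to the PBW monomials that appear, and the verification at each reduction that the smaller configurations still obey the standing inequalities $i\le j$, $i<j\le k<\ell$, and so on. Once (3) and (6) are established the proposition follows.
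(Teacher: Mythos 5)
Your $\tau$-reduction of (4) to (5) is a genuine economy --- the paper proves (4) in detail and then says (5) ``is similar'', so you really would halve that work --- and you should keep it. But the route you sketch otherwise diverges from the paper's, and it has a circularity in its logical ordering. The paper proceeds (1), (2), then a small lemma of three Serre-level special identities (e.g.\ $\pbw{j-2,j}\pbw{j}=q^{-1}\pbw{j}\pbw{j-2,j}$, $\pbw{j}\pbw{j-2,j+2}=\pbw{j-2,j+2}\pbw{j}$, and $\pbw{j-2,j}\pbw{j,j+2}=\pbw{j,j+2}\pbw{j-2,j}+(q^{-1}-q)\pbw{j-2,j+2}\pbw{j}$), then (3), (4), (5), (6) in that order, each invoking its predecessors. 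Relation (3) falls out quickly once one notes that $\pbw{i,\ell}$ commutes with every single generator $f_m$ for $i<m<\ell$ (a short two-sided induction whose base $m=i+2=\ell-2$ is one of those Serre-level lemmas), since $\pbw{j,k}$ is a polynomial in such $f_m$; and then (6) is a direct four-line computation from (2), (3) and (5), with no further induction at all.

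You instead place (5) before (3), claiming to derive it from (2), Serre, and ``(5) at smaller span'' alone; that cannot close. Whichever end of the segment you peel, you are left having to pass a shorter segment --- $\pbw{i+2,j}$ after the left recursion, or $\pbw{j}$ after the top one --- across $\pbw{i,k}$, and that is a nested configuration, i.e.\ exactly relation (3). The paper's inductive step for (4) invokes (3) at precisely this point, and (5) is its mirror image, so (3) must precede (5); your ordering (1), (2), (5), then (3) and (6) jointly, is therefore circular. The proposed joint overhang induction for (3) and (6) is also heavier than needed: your ``doubly minimal'' base cases are essentially the paper's special lemma and do require a direct Serre computation, but once those are in hand, (3) is immediate as above and (6) follows from (2), (3), (5) without any overhang induction. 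Your instinct that the real work is at the Serre level is correct; reorganize the dependencies as (1), (2), the special Serre identities, (3), (5) (with (4) via $\tau$), then (6), and the proof goes through cleanly.
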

\begin{proof}
\eqref{sub0} is obvious.
We prove \eqref{sub1} by the induction on $k-j$. 
If $k-j=2$, it is trivial by
the
definition. If $j<k-2$, then $\langle k \rangle$ and $\langle
i,j \rangle$ commute. Thus, we have
\begin{eqnarray*}
\pbw{i,j}\pbw{j+2,k}
&=&\pbw{i,j}\bl(\pbw{j+2,k-2}\pbw{k}-q\pbw{k}\pbw{j+2,k-2}\br)\\
&=&\bl(\pbw{i,k-2}+q\pbw{j+2,k-2}\pbw{i,j}\br)\pbw{k}
-q\pbw{k}\pbw{i,j}\pbw{j+2,k-2}\\
&=&\pbw{i,k-2}\pbw{k}+q\pbw{j+2,k-2}\pbw{k}\pbw{i,j}\\
&&\hs{15ex}-q\pbw{k}\bl(\pbw{i,k-2}+q\pbw{j+2,k-2}\pbw{i,j}\br)\\
&=&\pbw{i,k}+\pbw{j+2,k}\pbw{i,j}.
\end{eqnarray*}
In order to prove the other relations, we first show the following
special cases.
\begin{lem}\label{lem:sp}
We have for any $j \in I$
\be[{\rm(a)}]
\item
$\pbw{j-2,j}\pbw{j}=q^{-1}\pbw{j}\pbw{j-2,j}$
and $\pbw{j}\pbw{j,j+2}=q^{-1}\pbw{j,j+2}\pbw{j}$,
\item
$
\langle j\rangle \langle j-2,j+2 \rangle
=\langle j-2,j+2 \rangle \langle j \rangle
$,
\item
$\pbw{j-2,j}\pbw{j,j+2}=\pbw{j,j+2}\pbw{j-2,j}+(q^{-1}-q)
\pbw{j-2,j+2}\pbw{j}$.
\ee
\end{lem}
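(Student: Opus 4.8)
The plan is to prove each of the three identities (a), (b), (c) by direct computation starting from the inductive definition $\pbw{i,j}=\pbw{i,j-2}\pbw{j}-q\pbw{j}\pbw{i,j-2}$ of the segments and from the defining $q$-commutation relations of the $f_i$'s in $\Uf[\gl_\infty]$, namely $f_jf_k=f_kf_j$ when $|j-k|>2$ and the Serre relation $f_j^2f_k-[2]f_jf_kf_j+f_kf_j^2=0$ when $|j-k|=2$. Throughout I regard $f_i$ and $F_i$ as identified, and I only need the subalgebra generated by $f_{j-2},f_j,f_{j+2}$, which is (a quotient of) $\Uf[\gl_\infty]$ in type $A_3$; so in effect this is a finite computation in the rank-three quantum group.

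First I would prove (a). Write $\pbw{j-2,j}=\pbw{j-2}\pbw{j}-q\pbw{j}\pbw{j-2}$ (taking the convention from the definition), multiply on the right by $\pbw{j}=f_j$, and use the $A_2$ Serre relation $f_{j-2}f_j^2-[2]f_jf_{j-2}f_j+f_j^2f_{j-2}=0$ together with $[2]=q+q^{-1}$; collecting terms gives exactly $\pbw{j-2,j}f_j=q^{-1}f_j\pbw{j-2,j}$. The second identity of (a) is proved the same way, or more cheaply by applying the anti-automorphism of $\Uf[\gl_\infty]$ that fixes each $f_i$ and reverses products, which sends the first identity of (a) to the second after the substitution $j\leftrightarrow j$, $j-2\leftrightarrow j+2$.

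Next, (b): expand $\pbw{j-2,j+2}$ using the definition (either as $\pbw{j-2,j}\pbw{j+2}-q\pbw{j+2}\pbw{j-2,j}$ or the other bracketing), and commute $\pbw{j}=f_j$ past it. Here $f_j$ commutes with $f_{j-2}$? No — $|j-(j-2)|=2$, so they do not commute; the point is rather that $f_j$ commutes with $f_{j+2}$? again no. So (b) is genuinely the assertion that the specific element $\pbw{j-2,j+2}$ commutes with $f_j$, and it is proved by repeatedly applying (a) and the relation \eqref{sub1} (with the three consecutive indices $j-2,j,j+2$) to push $f_j$ through; the quadratic terms cancel in pairs. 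For (c), expand both sides via the definition in terms of $f_{j-2},f_j,f_{j+2}$, reduce every monomial to a normal form (say, with indices weakly decreasing, using the Serre relations to rewrite $f_jf_{j-2}f_j$ and $f_jf_{j+2}f_j$), and check that the difference of the two sides equals $(q^{-1}-q)\pbw{j-2,j+2}\pbw{j}$ after the same normalization. The factor $(q^{-1}-q)$ will emerge from the combination $q^{-1}-q$ of the cross-terms produced when the two double-bracket elements $\pbw{j-2,j}$ and $\pbw{j,j+2}$ are multiplied in the two orders.

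The main obstacle is purely bookkeeping: in (c) one multiplies two degree-two elements, getting sixteen monomials of degree four in three noncommuting generators, and one must normalize all of them consistently using the $A_2$ Serre relations twice. The cleanest way to keep this under control is to first establish (a) and (b), then use them as rewriting rules so that in (c) every occurrence of $f_j$ adjacent to $\pbw{j-2,j}$ or to $\pbw{j,j+2}$ can be moved with only a scalar $q^{\pm1}$, leaving just a handful of terms to compare; alternatively one can verify (c) by pairing both sides against the (nondegenerate) bilinear form $(\scbul,\scbul)$ on $\Uf[\gl_\infty]$, evaluating on the relevant weight space, which reduces it to a small linear-algebra check. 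Either route is routine; I would present the computation via (a) and (b) as rewriting rules, since that also makes the analogous higher identities in Proposition~\ref{pp1} fall out uniformly.
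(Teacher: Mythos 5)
Your treatment of (a) is exactly the paper's: write $\pbw{j-2,j}\pbw{j}-q^{-1}\pbw{j}\pbw{j-2,j}$ out in terms of $f_{j-2},f_j$ and recognize the $A_2$ Serre relation.

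For (b) and (c) there is a subtle circularity that your sketch does not notice. If you expand $\pbw{j-2,j+2}$ and push $\pbw{j}$ through it using only (a) and part (2) of Proposition~\ref{pp1}, the computation does \emph{not} close up: one finds
$\pbw{j}\pbw{j-2,j+2}=q\bl(\pbw{j-2,j}\pbw{j,j+2}-\pbw{j,j+2}\pbw{j-2,j}\br)+q^2\pbw{j-2,j+2}\pbw{j}$,
so the commutator of $\pbw{j}$ with $\pbw{j-2,j+2}$ is expressed precisely in terms of the left-hand side of (c), and nothing ``cancels in pairs'' at this stage. Thus (b) cannot be established by a one-sided push-through followed by (c); one needs (c) first, or a symmetric argument. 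The paper's device is to compute the single product $\pbw{j-2,j}\pbw{j,j+2}$ twice, once by expanding $\pbw{j,j+2}=\pbw{j}\pbw{j+2}-q\pbw{j+2}\pbw{j}$ and once by expanding $\pbw{j-2,j}=\pbw{j-2}\pbw{j}-q\pbw{j}\pbw{j-2}$; the two answers \eqref{eq:-202a} and \eqref{eq:-202b} both contain $\pbw{j,j+2}\pbw{j-2,j}$ plus a $q^{\pm1}$--combination of $\pbw{j}\pbw{j-2,j+2}$ and $\pbw{j-2,j+2}\pbw{j}$, and equating them gives (b) at once, after which either expression yields (c). Equivalently, in your framing you would have to push $\pbw{j}$ through $\pbw{j-2,j+2}$ \emph{from both sides}, obtaining the displayed identity together with its mirror
$\pbw{j-2,j+2}\pbw{j}=q\bl(\pbw{j-2,j}\pbw{j,j+2}-\pbw{j,j+2}\pbw{j-2,j}\br)+q^2\pbw{j}\pbw{j-2,j+2}$,
and subtract so that the residual term cancels. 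Your fallback options (full normal-form reduction in the rank-three algebra, or pairing against the nondegenerate form) are correct but considerably heavier, and they don't in fact use (a) and (b) as rewriting rules in the way you suggest. The key point to add to your proof is that (b) and (c) have to be extracted together from one ``same product, two expansions'' computation rather than proved sequentially.
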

\Proof
The first equality in (a) follows from
\begin{eqnarray*}
\langle j-2,j \rangle \langle j \rangle-q^{-1}\langle
j \rangle
\langle j-2,j
\rangle&=&\bl(f_{j-2}f_j-qf_{j}f_{j-2}\br)f_j-
q^{-1}f_j\bl(f_{j-2}f_j-qf_{j}f_{j-2}\br)\\
&=&f_{j-2}f_j^2-(q+q^{-1})f_jf_{j-2}f_j+f_j^2f_{j-2}=0.
\end{eqnarray*}

We can similarly prove the second.

Let us show (b) and (c).
We have, by (a)
\eq
&&\ba{l}
\pbw{j-2,j}\pbw{j,j+2}=
\pbw{j-2,j}\bl(\pbw{j}\pbw{j+2}-q\pbw{j+2}\pbw{j}\br)\\[1ex]
\quad=q^{-1}\pbw{j}\pbw{j-2,j}\pbw{j+2}
-q\bl(\pbw{j-2,j+2}+q\pbw{j+2}\pbw{j-2,j}\br)\pbw{j}\\[1ex]
\quad=q^{-1}\pbw{j}\bl(\pbw{j-2,j+2}+q\pbw{j+2}\pbw{j-2,j}\br)\\
\hs{8ex} -q\pbw{j-2,j+2}\pbw{j}-q\pbw{j+2}\pbw{j}\pbw{j-2,j}\\[1ex]
\quad=\bl(\pbw{j}\pbw{j+2}-q\pbw{j+2}\pbw{j}\br)\pbw{j-2,j}
+q^{-1}\pbw{j}\pbw{j-2,j+2}-q\pbw{j-2,j+2}\pbw{j}\\[1ex]
\quad=\pbw{j,j+2}\pbw{j-2,j}+q^{-1}\pbw{j}\pbw{j-2,j+2}-q\pbw{j-2,j+2}\pbw{j}.
\ea\label{eq:-202a}
\eneq
Similarly, we have
\eq
&&\ba{l}
\pbw{j-2,j}\pbw{j,j+2}=
\bl(\pbw{j-2}\pbw{j}-q\pbw{j}\pbw{j-2}\br)
\pbw{j,j+2}\\[1ex]
\quad=q^{-1}\pbw{j-2}\pbw{j,j+2}\pbw{j}
-q\pbw{j}\bl(\pbw{j-2,j+2}+q\pbw{j,j+2}\pbw{j-2}\br)\\[1ex]
\quad=q^{-1}\bl(\pbw{j-2,j+2}+q\pbw{j,j+2}\pbw{j-2}\br)\pbw{j}\\
\hs{8ex}-q\pbw{j}\pbw{j-2,j+2}-q\pbw{j,j+2}\pbw{j}\pbw{j-2}\\[1ex]
\quad=\pbw{j,j+2}\bl(\pbw{j-2}\pbw{j}-q\pbw{j}\pbw{j-2}\br)
+q^{-1}\pbw{j-2,j+2}\pbw{j}-q\pbw{j}\pbw{j-2,j+2}\\[1ex]
\quad=\pbw{j,j+2}\pbw{j-2,j}+q^{-1}\pbw{j-2,j+2}\pbw{j}-q\pbw{j}\pbw{j-2,j+2}.
\ea\label{eq:-202b}
\eneq
Then, \eqref{eq:-202a} and \eqref{eq:-202b} imply (b) and (c).
\QED
We shall resume the proof of Proposition~\ref{pp1}.
By Lemma~\ref{lem:sp} (b),  $\langle i,k \rangle$
commutes with $\pbw{j}$ for
$i<j<k$. Thus we obtain \eqref{sub2}.

We shall show \eqref{sub3} by the induction on $k-j$. Suppose $k-j=0$. 
The case $i=k-2$ is nothing but  Lemma~\ref{lem:sp} (a).

If $i<k-2$, then
\begin{eqnarray*}
\langle i,k \rangle \langle k \rangle
&=&\langle i,k-4 \rangle \langle k-2,k \rangle
\langle k
\rangle-q\langle k-2,k \rangle \langle i,k-4 \rangle
\langle k \rangle \\
&=&q^{-1}\langle k \rangle \langle i,k-4 \rangle \langle
k-2,k
\rangle-\langle k \rangle \langle k-2,k \rangle \langle
i,k-4 \rangle
=q^{-1}\langle k
\rangle \langle
i,k \rangle.
\end{eqnarray*}
Suppose $k-j>0$. By using the induction hypothesis and \eqref{sub2},
we have
\begin{eqnarray*}
\pbw{i,k}\pbw{j,k}&=&\pbw{i,k}\pbw{j}\pbw{j+2,k}-q\pbw{i,k}\pbw{j+2,k}\pbw{j}\\
&=&\pbw{j}\pbw{i,k}\pbw{j+2,k}-\pbw{j+2,k}\pbw{i,k}\pbw{j}\\
&=&q^{-1}\langle j \rangle \langle j+2,k \rangle \langle
i,k
\rangle-\langle j+2,k \rangle \langle j \rangle \langle
i,k \rangle 
=q^{-1}\langle j,k \rangle \langle i,k \rangle.
\end{eqnarray*}
Similarly we can prove \eqref{sub4}. 

Let us prove \eqref{sub5}.
We have
\eqn
\pbw{i,k}\pbw{j,\ell}
&=&\bl(\pbw{i,j-2}\pbw{j,k}-q\pbw{j,k}\pbw{i,j-2}\br)\pbw{j,\ell}\\
&=&q^{-1}\pbw{i,j-2}\pbw{j,\ell}\pbw{j,k}-
q\pbw{j,k}\bl(\pbw{i,\ell}+q\pbw{j,\ell}\pbw{i,j-2}\br)\\
&=&q^{-1}\bl(\pbw{i,\ell}+q\pbw{j,\ell}\pbw{i,j-2}\br)\pbw{j,k}-
q\pbw{i,\ell}\pbw{j,k}-q\pbw{j,\ell}\pbw{j,k}\pbw{i,j-2}\\
&=&\pbw{j,\ell}\pbw{i,k}
+(q^{-1}-q)\pbw{i,\ell}\pbw{j,k}.
\eneqn
\QED

\Lemma\label{lemq-1}
\bnum
\item For $1\le i\le j$, we have
$\pbw{-j,-i}\tvac=\pbw{i,j}\tvac$.
\item
For $1\le i<j$, we have
$\pbw{-j,i}\tvac=q^{-1}\pbw{-i,j}\tvac$.
\enum
\enlemma
\Proof
(i)\quad If $i=j$, it is obvious. By the induction on $j-i$,
we have
\eqn
\pbw{-j,-i}\tvac&=&(\pbw{-j,-i-2}\pbw{-i}-q\pbw{-i}\pbw{-j,-i-2})\tvac\\
&=&(\pbw{-j,-i-2}\pbw{i}-q\pbw{-i}\pbw{i+2,j})\tvac\\
&=&(\pbw{i}\pbw{-j,-i-2}-q\pbw{i+2,j}\pbw{-i})\tvac\\
&=&(\pbw{i}\pbw{i+2,j}-q\pbw{i+2,j}\pbw{i})\tvac
=\pbw{i,j}\tvac.
\eneqn

\noindent
(ii)\quad By (i), we have
\eqn
\pbw{-j,i}\tvac
&=&(\pbw{-j,-1}\pbw{1,i}-q\pbw{1,i}\pbw{-j,-1})\tvac\\
&=&(\pbw{-j,-1}\pbw{-i,-1}-q\pbw{1,i}\pbw{1,j})\tvac\\
&=&(q^{-1}\pbw{-i,-1}\pbw{-j,-1}-\pbw{1,j}\pbw{1,i})\tvac\\
&=&(q^{-1}\pbw{-i,-1}\pbw{1,j}-\pbw{1,j}\pbw{-i,-1})\tvac
=q^{-1}\pbw{-i,j}\tvac.
\eneqn
\QED

\begin{prop}\label{pp2}
\bnum
\item For a multisegment $\mb=\sum_{i\le j}m_{i,j}\pbw{i,j}$, we have
$$\Ad(t_k)P(\mb)
=q^{\sum_i(m_{i,k-2}-m_{i,k})+\sum_j(m_{k+2,j}-m_{k,j})}
P(\mb).$$
\item
\begin{eqnarray*}
e_k'\langle i,j \rangle^{(n)}&=&
\begin{cases}
q^{1-n}\langle i \rangle^{(n-1)} & \text{if $k=i=j$,} \\
(1-q^2)q^{1-n}\langle i+2,j \rangle \langle i,j
\rangle^{(n-1)} & \text{if $k=i<j$,} \\
0 & \text{otherwise,}
\end{cases}
\\
e_k^*\langle i,j \rangle^{(n)}&=&
\begin{cases}
q^{1-n}\langle i \rangle^{(n-1)} & \text{if $i=j=k$,} \\
(1-q^2)q^{1-n}\langle i,j \rangle^{(n-1)} \langle i,j-2
\rangle & \text{if $i<j=k$,} \\
0 & \text{otherwise.} \\
\end{cases}
\end{eqnarray*}
\ee
\end{prop}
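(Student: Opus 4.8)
The plan is to deduce both parts from material already in the excerpt: for (1) the weight formula $t_kP(\mb)t_k^{-1}=q^{(\al_k,\wt(\mb))}P(\mb)$; for (2) the twisted Leibniz rules \eqref{eq:der}, the values $e'_i(f_j)=e^*_i(f_j)=\delta_{ij}$, the identity $\Ad(t_k)a=q^{(\al_k,\wt(a))}a$ for homogeneous $a$, the defining recursion $\lr{i}{j}=\lr{i}{j-2}\lr{j}{j}-q\lr{j}{j}\lr{i}{j-2}$, and the quadratic relations of Proposition~\ref{pp1}. The whole proposition is a computation: part (1) is root-system bookkeeping, and part (2) is an induction first on the length $j-i$ of a segment and then on the divided power $n$.

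For (1): since $\Ad(t_k)P(\mb)=q^{(\al_k,\wt(\mb))}P(\mb)$ and $\wt(\mb)=-\sum_{i\le j}m_{i,j}(\al_i+\al_{i+2}+\cdots+\al_j)$, everything comes down to the elementary identity
\[
(\al_k,\al_i+\al_{i+2}+\cdots+\al_j)=\delta_{k,i}+\delta_{k,j}-\delta_{k,i-2}-\delta_{k,j+2}\qquad(i\le j),
\]
which is a one-line check on the shape of $(\al_p,\al_q)$ according to whether $k$ is an endpoint of, an interior point of, adjacent to, or far from $[i,j]$ (the case $i=j$ being included). Substituting and re-indexing the four sums $\sum_{i\le j}m_{i,j}\delta_{k,i}=\sum_{j\ge k}m_{k,j}$, $\sum_{i\le j}m_{i,j}\delta_{k,j}=\sum_{i\le k}m_{i,k}$, $\sum_{i\le j}m_{i,j}\delta_{k,i-2}=\sum_{j\ge k+2}m_{k+2,j}$ and $\sum_{i\le j}m_{i,j}\delta_{k,j+2}=\sum_{i\le k-2}m_{i,k-2}$ turns $(\al_k,\wt(\mb))$ into exactly the exponent in the statement.

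For (2) I first handle a single segment, proving by induction on $j-i$ that $e'_k\lr{i}{j}=\delta_{k,i}$ if $i=j$, $=(1-q^2)\lr{i+2}{j}$ if $k=i<j$, and $=0$ otherwise, and symmetrically $e^*_k\lr{i}{j}=\delta_{k,i}$ if $i=j$, $=(1-q^2)\lr{i}{j-2}$ if $k=j>i$, and $=0$ otherwise. The base case is $e'_k(f_i)=\delta_{k,i}$. For $i<j$, applying the first rule of \eqref{eq:der} to the defining recursion and using $e'_k(f_j)=\delta_{k,j}$, $\Ad(t_k)(f_j)=q^{-(\al_k,\al_j)}f_j$ yields
\[
e'_k\lr{i}{j}=(e'_k\lr{i}{j-2})\lr{j}{j}-q^{\,1-(\al_k,\al_j)}\lr{j}{j}(e'_k\lr{i}{j-2})+\delta_{k,j}\bl(\Ad(t_k)\lr{i}{j-2}-q\lr{i}{j-2}\br).
\]
If $k\notin\{i,j\}$ the induction hypothesis kills $e'_k\lr{i}{j-2}$; if $k=j$ then $e'_j\lr{i}{j-2}=0$ while $\Ad(t_j)\lr{i}{j-2}=q\lr{i}{j-2}$ by part (1), so the bracket cancels; if $k=i$ and $j=i+2$ one reads off $(1-q^2)\lr{i+2}{i+2}$ using $(\al_i,\al_{i+2})=-1$; and if $k=i$ with $j\ge i+4$ then $(\al_i,\al_j)=0$ and, after substituting $e'_i\lr{i}{j-2}=(1-q^2)\lr{i+2}{j-2}$, the right-hand side collapses by the recursion (with left endpoint $i+2$) to $(1-q^2)\lr{i+2}{j}$. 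The computation for $e^*_k$ is the mirror image, built on the second rule of \eqref{eq:der}; here the left endpoint of $\lr{i}{j}$ is frozen and its right endpoint moves, and the one genuinely new point is that the term that would otherwise survive at $k=j-2$ vanishes because $\lr{i}{j-4}$ commutes with $\lr{j}{j}$ by Proposition~\ref{pp1}\,\eqref{sub0}.

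Finally I pass to divided powers. Put $X=\lr{i}{j}$ and, for $k=i<j$, $Y=\lr{i+2}{j}$; then $e'_iX=(1-q^2)Y$, $\Ad(t_i)X=q^{-1}X$ by part (1), and $YX=qXY$ by Proposition~\ref{pp1}\,\eqref{sub3}. Iterating \eqref{eq:der} and commuting all $X$'s to the left,
\[
e'_i(X^n)=\sum_{m=0}^{n-1}\Ad(t_i)(X^m)\,(e'_iX)\,X^{n-1-m}=(1-q^2)\left(\sum_{m=0}^{n-1}q^{\,n-1-2m}\right)X^{n-1}Y=(1-q^2)[n]\,X^{n-1}Y,
\]
so dividing by $[n]!$ and rewriting $X^{n-1}Y=q^{\,1-n}YX^{n-1}$ gives $e'_i\lr{i}{j}^{(n)}=(1-q^2)q^{\,1-n}\lr{i+2}{j}\lr{i}{j}^{(n-1)}$. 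The case $k=i=j$ is the standard $e'_i(f_i^{(n)})=q^{\,1-n}f_i^{(n-1)}$, and for every other $k$ one already has $e'_kX=0$, hence $e'_k(X^n)=0$; the $e^*_k$ formulas come out the same way with $Y=\lr{i}{j-2}$ and $XY=qYX$ from \eqref{sub4}. I do not anticipate a real obstacle: the statement is purely computational, and the only thing demanding care is the bookkeeping of the small cases $i=j$, $j=i\pm2$ and $k\in\{i,j,j-2\}$, where the generic argument breaks down and must be checked by hand.
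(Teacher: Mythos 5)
Your proof is correct, and it takes a genuinely different route from the paper for part (2). The paper does \emph{not} induct on the segment length $j-i$: for the base case $e'_k\pbw{k,j}$ it uses the ``split off the left node'' decomposition $\pbw{k,j}=\pbw{k}\pbw{k+2,j}-q\pbw{k+2,j}\pbw{k}$ (which follows from Proposition~\ref{pp1}\eqref{sub1}), and to show $e'_k\pbw{i,j}=0$ for $i<k\le j$ it splits the segment at the node $k$ as $\pbw{i,j}=\pbw{i,k-2}\pbw{k,j}-q\pbw{k,j}\pbw{i,k-2}$; the vanishing is then immediate because $e'_k$ kills $\pbw{i,k-2}$ on sight and the two terms coming from $e'_k\pbw{k,j}=(1-q^2)\pbw{k+2,j}$ cancel by commutativity. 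You instead run a single induction on $j-i$ using the defining recursion $\pbw{i,j}=\pbw{i,j-2}\pbw{j}-q\pbw{j}\pbw{i,j-2}$ and handle all values of $k$ at once; this is slightly more economical in that one induction covers both the ``$k=i$'' and ``$k\ne i$'' branches, at the cost of having to check the small cases $k=j$, $k=i$ with $j=i+2$, and (for $e^*$) $k=j-2$ by hand, which you do. For divided powers the paper inducts on $n$ via $[n]\pbw{k,j}^{(n)}=\pbw{k,j}\pbw{k,j}^{(n-1)}$ and Proposition~\ref{pp1}\eqref{sub3}, whereas you iterate the twisted Leibniz rule once, commute, and sum the resulting $q$-geometric series to get $[n]$ in closed form. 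Both are standard; your version avoids a second induction. Part (1) is the same easy weight bookkeeping as in the paper (where it is dismissed as ``obvious''), and your identity $(\al_k,\al_i+\cdots+\al_j)=\delta_{k,i}+\delta_{k,j}-\delta_{k,i-2}-\delta_{k,j+2}$ is the right way to make it precise.
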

\begin{proof}
(i) is obvious. Let us show (ii).
It is obvious that
$e'_k\pbw{i,j}^{(n)}=0$ unless
$i\le k\le j$.
It is known (\cite{K}) that we have
$e'_k\pbw{k}^{(n)}=q^{1-n}\langle k\rangle^{(n-1)}$. 
%
We shall prove $e_k'\langle k,j
\rangle^{(n)}=(1-q^2)q^{1-n}\langle
k+2,j\rangle \langle k,j \rangle^{(n-1)}$ for $k<j$ by the induction on
$n$. By \eqref{eq:der}, we have 
\begin{eqnarray*}
e_k'\langle k,j \rangle&=&e_k'(\langle k \rangle \langle
k+2,j \rangle-q
\langle k+2,j \rangle \langle k \rangle) \\
&=&\pbw{k+2,j}-q^2\pbw{k+2,j}=(1-q^2)\pbw{k+2,j}.
\eneqn
%
For $n\ge 1$, by the induction hypothesis and Proposition \ref{pp1} 
\eqref{sub3}, we get
\begin{eqnarray*}
&&[n]e_k'\langle k,j \rangle^{(n)} 
=e'_k\pbw{k,j}\pbw{k,j}^{(n-1)}\\
&&\quad=(1-q^2)\langle k+2,j \rangle \langle k,j
\left. \rangle^{(n-1)}+q^{-1}\langle k,j \rangle \cdot \right.
(1-q^2)q^{2-n}\langle k+2,j \rangle \langle k,j
\rangle^{(n-2)} \\
&&\quad=(1-q^2)\left\{\langle k+2,j
\rangle \langle
k,j \rangle^{(n-1)}
+q^{1-n}\langle k,j\rangle \langle
k+2,j \rangle \langle k,j \rangle^{(n-2)}\right\}\\
&&\quad=(1-q^2)(1+q^{-n}[n-1])\langle k+2,j\rangle \langle k,j
\rangle^{(n-1)}\\
&&\quad=(1-q^2)q^{1-n}[n]\langle k+2,j\rangle \langle k,j
\rangle^{(n-1)}.
\end{eqnarray*}
Finally we show $e_k'\langle i,j \rangle=0$ if $k \neq i$.
We may assume $i<k\le j$.
If $i<k<j$, we have
\begin{eqnarray*}
e_k'\langle i,j \rangle&=&
e'_k(\pbw{i,k-2}\pbw{k,j}-q\pbw{k,j}\pbw{i,k-2})\\
&=&
q\pbw{i,k-2}e'_k\pbw{k,j}-q(e'_k\pbw{k,j})\pbw{i,k-2}\\
&=&q(1-q^2)\langle i,k-2 \rangle \langle k+2,j
\rangle-q(1-q^2)\langle
k+2,j \rangle \langle i,k-2 \rangle \\
&=&0.
\end{eqnarray*}
The case $k=j$ is similarly proved.

The proof for $e_k^*$ is similar.
\end{proof}

\subsection{Actions of divided powers}

\begin{lem}\label{lem1}
Let $a$, $b$ be non-negative integers,
and let $k\in I_{>0}\seteq\set{k\in I}{k>0}$.
\begin{enumerate}[{\rm(1)}]
\item{For $\ell	>k$, we have
\begin{eqnarray*}
\langle -k \rangle \langle -k+2,\ell \rangle^{(a)} \langle
-k,\ell
\rangle^{(b)}&=&[b+1]\langle -k+2,\ell
\rangle^{(a-1)}\langle -k,\ell
\rangle^{(b+1)} \\
&{}&+q^{a-b}\langle -k+2,\ell
\rangle^{(a)}\langle -k,\ell\rangle^{(b)}\langle -k
\rangle.
\end{eqnarray*}}
\item{We have
\begin{eqnarray*}
\langle -k \rangle \langle -k+2,k \rangle^{(a)}\langle
-k,k
\rangle^{\dv{b}}&=&[2b+2]\langle -k+2,k\rangle^{(a-1)}\langle
-k,k
\rangle^{\dv{b+1}} \\
&{}&+q^{a-b}\langle -k+2,k \rangle^{(a)}\langle -k,k
\rangle^{\dv{b}}\langle -k
\rangle.
\end{eqnarray*}
}
\item{For $k>1$, we have
\begin{eqnarray*}
\langle -k \rangle \langle -k+2,k-2
\rangle^{\dv{a}}&=&(q^a+q^{-a})^{-1}\langle -k+2,k-2
\rangle^{\dv{a-1}}\langle -k,k-2 \rangle \\
&{}&+q^a\langle -k+2,k-2 \rangle^{\dv{a}}\langle -k \rangle.
\end{eqnarray*}
}
\item{If $\ell\le k-2$, we have
\begin{eqnarray*}
\langle \ell,k-2 \rangle^{(a)}\langle k \rangle=\langle
\ell,k \rangle
\langle \ell,k-2 \rangle^{(a-1)}+q^a\langle k \rangle
\langle \ell,k-2
\rangle^{(a)}.
\end{eqnarray*}
}
\item{For $k>1$, we have
\begin{eqnarray*}
\langle -k+2,k-2 \rangle^{\dv{a}}\langle k
\rangle&=&(q^a+q^{-a})^{-1}\langle -k+2,k \rangle
\langle -k+2,k-2\rangle^{\dv{a-1}} \\
&{}&+q^a\langle k \rangle \langle -k+2,k-2 \rangle^{\dv{a}}.
\end{eqnarray*}
}
\end{enumerate}
\end{lem}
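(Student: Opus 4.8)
The plan is to prove all five identities by a uniform strategy: expand the segment $\pbw{-k}$ (or the divided power on the right) in terms of the generators of the ambient algebra $\Uf[\gl_\infty]$, push it past the divided powers using the commutation relations of Proposition~\ref{pp1} together with the special cases in Lemma~\ref{lem:sp}, and then reorganize using the formulas for the action of $e'_k$ and $e^*_k$ from Proposition~\ref{pp2}. The cleanest route is to argue by induction on the exponents $a$ and $b$, mirroring the induction already used in the proofs of Proposition~\ref{pp1}\,\eqref{sub3} and of Proposition~\ref{pp2}\,(ii). In fact, since $e'_k$ is (up to a scalar and a twist by $\Ad(t_k)$) the operator measuring the failure of $\pbw{-k}$ to commute past elements of $\Uf[\gl_\infty]$, each of these identities is essentially a ``Leibniz-rule'' computation: write $\pbw{-k}\cdot X = (\Ad(t_k)X)\pbw{-k} + (\text{lower terms coming from }e'_k X)$, and then identify the lower terms via Proposition~\ref{pp2}\,(ii).

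Concretely, for (1) I would proceed by induction on $b$. The base case $b=0$ reduces to moving $\pbw{-k}$ past $\pbw{-k+2,\ell}^{(a)}$, which is governed by $e'_{-k}\pbw{-k+2,\ell}^{(a)}$; but $\pbw{-k+2,\ell}$ starts at $-k+2\ne -k$, so $e'_{-k}$ of it vanishes and one only picks up the $\Ad(t_{-k})$-twist, giving the $q^{a}$ term and the ``$[1]=1$'' first term from $e'_{-k}\pbw{-k,\ell}$ applied once. For the inductive step, I would write $\pbw{-k,\ell}^{(b)}=\tfrac{1}{[b]}\pbw{-k,\ell}\pbw{-k,\ell}^{(b-1)}$, commute $\pbw{-k}$ past the single factor $\pbw{-k,\ell}$ using Proposition~\ref{pp1}\,\eqref{sub3} (which says $\pbw{-k,\ell}\pbw{-k}=q\,\pbw{-k}\pbw{-k,\ell}$ up to reindexing — here $i<j\le k$ with $i=-k$, so $\pbw{-k,\ell}\pbw{-k,\ell}=q^{-1}\pbw{-k+2,\ell}\pbw{-k,\ell}$ type relations), apply the induction hypothesis to the remaining $\pbw{-k,\ell}^{(b-1)}$, and collect terms. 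The coefficient identity that makes it close is the same $q$-integer manipulation used in Proposition~\ref{pp2}\,(ii), namely $1+q^{-b}[b-1]=q^{1-b}[b]$, now in the shifted form producing $[b+1]$.

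Item (2) is the ``$i=-j$'' analogue of (1): here the right-hand divided power is the modified divided power $\pbw{-k,k}^{\dv{b}}$ with denominator $\prod_{\nu=1}^{b}[2\nu]$, so the same computation goes through but the combinatorial factor $[b+1]$ is replaced by $[2b+2]$, which is exactly the ratio $\prod_{\nu=1}^{b+1}[2\nu]\big/\prod_{\nu=1}^{b}[2\nu]$ up to the extra $[b+1]$ coming from the commutation — one should check $[2b+2]=[b+1](q^{b+1}+q^{-(b+1)})$ and track where the second factor arises. Items (4) and (5) are the right-handed (i.e. $e^*_k$) mirror images of (1) and (2): I would either repeat the argument using $e^*_k$ and Proposition~\ref{pp2}\,(ii) for $e^*_k$, or deduce them from (1), (2) by applying the anti-involution $\xi$ (the one with $\xi(q)=q^{-1}$, $\xi(f_i)=f_{\theta(i)}$) introduced before, which swaps left and right multiplication and sends $\pbw{i,j}$ to a segment of the mirrored interval. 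Item (3) is the special ``empty middle'' case where the segment $\pbw{-k+2,k-2}$ already has $\theta$-symmetric support and $\pbw{-k}$ sits just outside it; it should follow from (2) by setting the appropriate exponent to force the length-one overlap, or directly by the same induction on $a$, the novelty being the symmetric denominator $(q^a+q^{-a})^{-1}$ which comes from the $[2\nu]$-denominators in the modified divided power colliding with a single commutation.

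The main obstacle I expect is purely bookkeeping: keeping the indices of the segments straight under the reflection $i\mapsto -i$ (so that one correctly recognizes when a product of two segments is a PBW-ordered monomial versus when Proposition~\ref{pp1}\,\eqref{sub3}–\eqref{sub5} must be invoked to reorder), and verifying that the $q$-integer coefficients telescope to exactly $[b+1]$, $[2b+2]$, $(q^a+q^{-a})^{-1}$, $q^{a-b}$, $q^{a}$ rather than to something off by a power of $q$. There is no conceptual difficulty beyond what is already deployed in Propositions~\ref{pp1} and~\ref{pp2}; the work is in organizing the five inductions so that (4), (5) are gotten for free from (1), (2) via $\xi$, and (3) from (2), leaving only two genuine computations.
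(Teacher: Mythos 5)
The conceptual framing in your proposal contains a genuine error that propagates through the whole plan: the operator $e'_k$ is \emph{not} the derivation measuring the failure of $\pbw{-k}=f_{-k}$ to commute past elements of $\Uf$. It satisfies $e'_i f_j = q^{-(\al_i,\al_j)}f_j e'_i + \delta_{ij}$ and the Leibniz rule $e'_i(ab)=e'_i(a)b+(\Ad(t_i)a)e'_ib$; in other words, $e'_i$ governs commutation with $e_i$, not with $f_i$. Moving $\pbw{-k}$ past products of segments inside $\Uf$ is controlled entirely by the multiplicative relations of Proposition~\ref{pp1}, and Proposition~\ref{pp2} plays no role in the paper's proof of this lemma. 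This misidentification makes your sketch of the base case of the $b$-induction for (1) incorrect: you claim the $b=0$ case follows because ``$e'_{-k}$ of $\pbw{-k+2,\ell}^{(a)}$ vanishes and one only picks up the $\Ad(t_{-k})$-twist,'' but if that were the whole story there would be no first term at all; moreover the term you then attribute to ``$e'_{-k}\pbw{-k,\ell}$'' cannot arise, since $\pbw{-k,\ell}$ does not occur in the expression $\pbw{-k}\pbw{-k+2,\ell}^{(a)}$. In reality the $b=0$ identity $\pbw{-k}\pbw{-k+2,\ell}^{(a)}=\pbw{-k+2,\ell}^{(a-1)}\pbw{-k,\ell}+q^a\pbw{-k+2,\ell}^{(a)}\pbw{-k}$ already requires the full induction on $a$ driven by Proposition~\ref{pp1}~(2), i.e.\ $\pbw{-k}\pbw{-k+2,\ell}=\pbw{-k,\ell}+q\pbw{-k+2,\ell}\pbw{-k}$, together with the $q$-integer identity $q^{1-a}+q[a-1]=[a]$. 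This is precisely what the paper does: one single induction on $a$ with $b$ fixed, with a trivial $a=0$ base (since $\pbw{-k+2,\ell}^{(-1)}=0$), rather than your proposed double induction whose base is not established.

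The proposed derivation of (4) and (5) from (1) and (2) via the anti-involution $\xi$ also does not work. Since $\xi(q)=q^{-1}$, applying $\xi$ to (1) with $b=0$ produces the bar-conjugate of the target identity, i.e.\ the coefficient $q^a$ becomes $q^{-a}$; one would need a $q$-\emph{fixing} anti-automorphism $\tau$ with $\tau(f_i)=f_{\theta(i)}$, which the paper does not introduce. Even with the correct anti-automorphism, the image of the hypothesis $\ell>k$ of (1) only yields (4) for $\ell<-k$, not for the full range $\ell\le k-2$ asserted; so (4) genuinely needs its own induction on $a$ (the paper indeed says ``Similarly, we can prove (4) and (5) by the induction on $a$''). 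The inductive framework you have in mind is the right one once you replace the $e'_k$ picture by Proposition~\ref{pp1} and switch the induction variable for (1) and (2) to $a$, but as written the base case of (1) is a gap and (4), (5) are not obtained for free.
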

\begin{proof}
We show (1) by the induction on $a$. If $a=0$, it is trivial.
For $a>0$, we have
\begin{eqnarray*}
&{}&[a]\langle -k \rangle \langle -k+2,\ell \rangle^{(a)}
\langle -k,\ell
\rangle^{(b)} \\
&=&\bl(\langle -k,\ell \rangle+q\langle
-k+2,\ell \rangle
\langle -k \rangle\br)\langle -k+2,\ell
\rangle^{(a-1)}\langle -k,\ell
\rangle^{(b)} \\
&=&[b+1]q^{1-a}\langle -k+2,\ell
\rangle^{(a-1)}\langle -k,\ell \rangle^{(b+1)} \\
&{}&+q\langle -k+2,\ell \rangle \{[b+1]\langle -k+2,\ell
\rangle^{(a-2)}\langle -k,\ell
\rangle^{(b+1)}+q^{a-b-1}\langle -k+2,\ell
\rangle^{(a-1)}\langle -k,\ell \rangle^{(b)}\langle -k
\rangle\} \\
&=&[b+1](q^{1-a}+q[a-1])
\langle -k+2,\ell \rangle^{(a-1)}\langle -k,\ell
\rangle^{(b+1)}+q^{a-b}[a]\langle -k+2,\ell
\rangle^{(a)}\langle -k,\ell\rangle^{(b)}\langle -k
\rangle.
\end{eqnarray*}
Since $q^{1-a}+q[a-1]=[a]$, the induction proceeds.

The proof of (2) is similar by using
$\langle -k,k \rangle^{\dv{b}}=
[2b]\langle -k,k \rangle^{\dv{b-1}}\pbw{-k,k}$.

We prove (3) by the induction on $a$. The case $a=0$ is trivial.
For $a>0$, we have
\begin{eqnarray*}
&&[2a]\langle -k \rangle \langle -k+2,k-2 \rangle^{\dv{a}}
=\bl(\pbw{-k,k-2}+q\pbw{-k+2,k-2}\pbw{-k}\br)\pbw{-k+2,k-2}^{\dv{a-1}}\\
&&\quad= q^{1-a}\langle -k+2,k-2 \rangle^{\dv{a-1}}
\langle -k,k-2 \rangle \\
&&\qquad+{q}\langle -k+2,k-2 \rangle \bl\{
(q^{a-1}+q^{1-a})^{-1}\langle -k+2,k-2 \rangle^{\dv{a-2}}
\langle -k,k-2 \rangle \\
&&\hs{30ex}+q^{a-1}\langle -k+2,k-2 \rangle^{\dv{a-1}} 
\langle -k \rangle\br\} \\
&=&\bl(q^{1-a}+\dfrac{q[2a-2]}{q^{a-1}+q^{1-a}}\br)
\langle -k+2,k-2 \rangle^{\dv{a-1}}\langle -k,k-2 \rangle+q^a[2a]
\langle -k+2,k-2 \rangle^{\dv{a}}\langle -k \rangle\\
&=&(q^a+q^{-a})^{-1}[2a]\langle -k+2,k-2 \rangle^{\dv{a-1}}
\langle -k,k-2 \rangle+q^a[2a]\langle -k+2,k-2 \rangle^{\dv{a}}
\langle -k \rangle.
\end{eqnarray*}
Similarly, we can prove (4) and (5) by the induction on $a$.
\end{proof}

\Lemma\label{lem:ijta}
For $k>1$  and $a,b,c,d\ge0$, set
\[
(a,b,c,d)=\langle k \rangle^{(a)}\lr{-k+2}{k}^{(b)}
\lr{-k}{k}^{\dv{c}}\lr{-k+2}{k-2}^{\dv{d}}\tvac.
\]
Then, we have
\eq
&&\ba{rcl}
\pbw{-k}(a,b,c,d)&=&
[2c+2](a,b-1,c+1,d) \\[1ex]
&&+[b+1]q^{b-2c}(a,b+1,c,d-1)\\[1ex]
&&+[a+1]q^{2d-2c}(a+1,b,c,d).
\ea
\label{eq:-kijta}
\eneq
\enlemma
\Proof
We shall show first
\eq
&&
\ba{l}\pbw{-k}\pbw{-k+2,k-2}^{\dv{d}}\tvac\\[1ex]
\hs{10ex}=\bl(\pbw{-k+2,k}\pbw{-k+2,k-2}^{\dv{d-1}}
+q^{2d}\pbw{k}\pbw{-k+2,k-2}^{\dv{d}}\br)\tvac.
\ea\label{eq:45}
\eneq
By Lemma~\ref{lem1} (3),
we have
\eqn
\pbw{-k}\pbw{-k+2,k-2}^{\dv{d}}\tvac&=&
\bl((q^d+q^{-d})^{-1}\pbw{-k+2,k-2}^{\dv{d-1}}\pbw{-k,k-2}\\
&&\hs{15ex}+q^d\pbw{-k+2,k-2}^{\dv{d}}\pbw{-k}\br)\tvac.
\eneqn
By Lemma~\ref{lemq-1} and Lemma~\ref{lem1} (5), it is equal to
\eqn
&&\bl((q^d+q^{-d})^{-1}q^{-1}\pbw{-k+2,k-2}^{\dv{d-1}}\pbw{-k+2,k}
+q^d\pbw{-k+2,k-2}^{\dv{d}}\pbw{k}\br)\tvac\\
&&=\Bigl((q^d+q^{-d})^{-1}q^{-1}q^{1-d}\pbw{-k+2,k}\pbw{-k+2,k-2}^{\dv{d-1}}\\
&&\hs{5ex}+q^d\bigl((q^d+q^{-d})^{-1}\pbw{-k+2,k}\pbw{-k+2,k-2}^{\dv{d-1}}
+q^d\pbw{k}\pbw{-k+2,k-2}^{\dv{d}}\br)\Bigr)\tvac.
\eneqn
Thus we obtain \eqref{eq:45}.
Applying Lemma \ref{lem1} (2), we have
\eqn
&&\pbw{-k}(a,b,c,d)
=\pbw{k}^{(a)}\Bigl([2c+2]\pbw{-k+2,k}^{(b-1)}\pbw{-k,k}^{\dv{c+1}}\\
&&\hs{20ex}+q^{b-c}\pbw{-k+2,k}^{(b)}\pbw{-k,k}^{\dv{c}}\pbw{-k}\Bigr)
\pbw{-k+2,k-2}^{\dv{d}}\tvac\\
&&\qquad=
[2c+2](a,b-1,c+1,d)+
q^{b-c}\pbw{k}^{(a)}\pbw{-k+2,k}^{(b)}\pbw{-k,k}^{\dv{c}}\\
&&\hs{10ex}\times\bl(\pbw{-k+2,k}\pbw{-k+2,k-2}^{\dv{d-1}}
+q^{2d}\pbw{k}\pbw{-k+2,k-2}^{\dv{d}}\br)\tvac\\
&&\qquad=[2c+2](a,b-1,c+1,d)+q^{b-2c}[b+1](a,b+1,c,d-1)\\
&&\hs{40ex}+q^{(b-c)+2d-c-b}[a+1](a+1,b,c,d).
\eneqn
Hence we have \eqref{eq:-kijta}.
\QED

\begin{prop} \label{prop:div}
\be[{\rm(1)}]
\item We have
\begin{eqnarray*}
\langle -1 \rangle^{(a)} \lr{-1}{1}^{\dv{m}}\tvac
=\sum_{s=0}^{\lfloor{a/2}\rfloor}\left(\prod_{\nu=1}^{s}
\dfrac{[2m+2\nu]}{[2\nu]}\right)q^{-2(a-s)m
+\frac{(a-2s)(a-2s-1)}{2}}\langle 1 \rangle^{(a-2s)}
\lr{-1}{1}^{\dv{m+s}}\tvac.
\end{eqnarray*}
\item{For $k>1$, we have
\begin{eqnarray*}
&{}&\langle -k \rangle^{(n)}\lr{-k+2}{k-2}^{\dv{a}}\tvac \\
&=&\sum_{u=0}^{n}\sum_{i+j+2t=n,j+t=u}q^{2ai+\frac{j(j-1)}{2}-i(t+u)}
\pbw{k}^{(i)}\lr{-k+2}{k}^{(j)}\lr{-k}{k}^{\dv{t}}
\lr{-k+2}{k-2}^{\dv{a-u}}\tvac.
\end{eqnarray*}
}
\item{If $\ell>k$, we have
\begin{eqnarray*}
\langle k \rangle^{(n)}\lr{k+2}{\ell}^{(a)}
=\sum_{s=0}^{n}q^{(n-s)(a-s)}
\lr{k+2}{\ell}^{(a-s)}\lr{k}{\ell}^{(s)}\langle k \rangle^{(n-s)}.
\end{eqnarray*}
}
\end{enumerate}
\end{prop}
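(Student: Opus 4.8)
The plan is to prove the three identities of Proposition~\ref{prop:div} separately, in each case by induction on the exponent of the leftmost factor --- on $a$ for~(1) and on $n$ for~(2) and~(3) --- so that the inductive step reduces to peeling off a single $\pbw{-k}$ (resp.\ $\pbw{k}$) from $\pbw{-k}^{(n)}$ (resp.\ $\pbw{k}^{(n)}$), sliding it to the right through the remaining segments by the basic relations of Proposition~\ref{pp1} and the merging relations of Lemma~\ref{lem1}, and, whenever a factor $\pbw{-k}$ finally reaches $\tvac$, rewriting it via Lemma~\ref{lemq-1}. After that, everything is bookkeeping of $q$-exponents, collapsed at the end by elementary Gaussian-binomial identities and the divided-power product rules $\pbw{i,j}^{(p)}\pbw{i,j}=[p+1]\pbw{i,j}^{(p+1)}$ and $\pbw{-k,k}^{\dv p}\pbw{-k,k}=[2p+2]\pbw{-k,k}^{\dv{p+1}}$.

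Identity~(3) is the easiest: it lives in $\Uf[\gl_\infty]$, so $\tvac$ plays no role. Writing $\pbw{k}^{(n)}=[n]^{-1}\pbw{k}\,\pbw{k}^{(n-1)}$, inserting the induction hypothesis, and pushing the extra $\pbw{k}$ to the right past each $\pbw{k+2,\ell}^{(a-s)}\pbw{k,\ell}^{(s)}$ using $\pbw{k}\pbw{k+2,\ell}=\pbw{k,\ell}+q\pbw{k+2,\ell}\pbw{k}$ and $\pbw{k}\pbw{k,\ell}=q^{-1}\pbw{k,\ell}\pbw{k}$ (Proposition~\ref{pp1}~\eqref{sub1},~\eqref{sub4}), the factor $\pbw{k}$ either merges with one $\pbw{k+2,\ell}$ into a new $\pbw{k,\ell}$ or slides through unchanged, and the two contributions recombine into the $s$-th summand after a one-line $q$-Pascal step (one may equally invoke Lemma~\ref{lem1}(1), whose derivation from Proposition~\ref{pp1} applies to any node). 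Identity~(1) is the degenerate case $k=1$, where $\pbw{-k+2,k-2}$ does not exist and the intermediate expressions have the shape $\pbw{-1}^{(p)}\pbw{1}^{(q)}\pbw{-1,1}^{\dv r}\tvac$. Inducting on $a$, I would push one $\pbw{-1}$ through $\pbw{1}^{(p)}\pbw{-1,1}^{\dv r}$ with Lemma~\ref{lem1}(2) specialized to $k=1$ (so $\pbw{-k+2,k}=\pbw{1}$): one contribution raises the power of $\pbw{-1,1}$ and supplies the factor $[2r+2]$, the other leaves a trailing $\pbw{-1}$ which, after $\pbw{-1}\tvac=\pbw{1}\tvac$ (Lemma~\ref{lemq-1}(i)) and $\pbw{-1,1}\pbw{1}=q^{-1}\pbw{1}\pbw{-1,1}$ (Proposition~\ref{pp1}~\eqref{sub3}), turns into a fresh $\pbw{1}$; summing over the step at which the merge-count reaches $s$ produces $\prod_{\nu=1}^{s}[2m+2\nu]/[2\nu]$ and the stated power of $q$, the denominator $[2\nu]$ being exactly what the even normalization in $\pbw{-1,1}^{\dv{\cdot}}$ demands.

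The real work is identity~(2), and this is the step I expect to be the main obstacle. Here the induction on $n$ is driven by the three-term recursion~\eqref{eq:-kijta} of Lemma~\ref{lem:ijta}: applying $\pbw{-k}$ a total of $n$ times (and dividing by $[n]!$) expresses the left-hand side as a weighted sum over lattice paths with three kinds of steps (raise $t$, raise $j$, or raise $i$), with step weights $[2t+2]$, $[j+1]q^{\,j-2t}$ and $[i+1]q^{\,2d-2t}$. One first observes that the recursion raises $i+j+2t$ by $1$ at every step and preserves $j+t+d$, so that after $n$ steps one automatically lands in the index set $i+j+2t=n$, $j+t=u$, $d=a-u$ that appears on the right. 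The delicate point is then to show that the total weight of all paths reaching a fixed endpoint, divided by $[n]!$, collapses to the single monomial $q^{\,2ai+j(j-1)/2-i(t+u)}$. The cleanest route I foresee is to prove this refined identity by induction on $n$: feed the conjectured right-hand side for $n-1$ into~\eqref{eq:-kijta} and match coefficients with the right-hand side for $n$, which reduces summand by summand to standard Gaussian-binomial identities --- elementary in nature, but with enough interacting $q$-powers and $[2\nu]$-normalizations that the matching must be carried out with some care. This coefficient verification is where essentially all the combinatorial substance of the proposition lies.
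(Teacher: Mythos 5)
Your proposal follows essentially the paper's own proof: (1) by induction on $a$ using Lemma~\ref{lem1}(2) together with $\pbw{-1}\tvac=\pbw{1}\tvac$ and the commutation $\pbw{-1,1}\pbw{1}=q^{-1}\pbw{1}\pbw{-1,1}$; (2) by induction on $n$ driven by the three-term recursion \eqref{eq:-kijta} of Lemma~\ref{lem:ijta}, with the weight bookkeeping checked against the right-hand side; and (3) by the analogous, simpler push-through induction. The lattice-path reading of the recursion in (2) is a useful gloss but the underlying argument --- feed the $n-1$ case into \eqref{eq:-kijta} and match $q$-coefficients --- is exactly what the paper carries out.
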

\begin{proof}
We prove (1) by the induction on $a$. 
The case $a=0$ is trivial.
Assume $a>0$. Then, Lemma~\ref{lem1} (2) implies 
\eqn
\pbw{-1}\langle 1 \rangle^{(n)} \lr{-1}{1}^{\dv{m}}\tvac
&=&\bl([2m+2]\langle 1 \rangle^{(n-1)}\lr{-1}{1}^{\dv{m+1}}
+q^{n-m}\langle 1 \rangle^{(n)}\lr{-1}{1}^{\dv{m}}\pbw{-1}\br)
\tvac\\
&=&\bl([2m+2]\langle 1 \rangle^{(n-1)}\lr{-1}{1}^{\dv{m+1}}
+q^{n-m}\langle 1 \rangle^{(n)}\lr{-1}{1}^{\dv{m}}\pbw{1}\br)
\tvac\\
&=&\bl(
[2m+2]\langle 1 \rangle^{(n-1)}\lr{-1}{1}^{\dv{m+1}}
+q^{n-2m}[n+1]\langle 1 \rangle^{(n+1)}\lr{-1}{1}^{\dv{m}}\br)\tvac.
\eneqn
Put
\[
c_s=\left(\prod_{\nu=1}^{s}\dfrac{[2m+2\nu]}{[2\nu]}\right)q^{-2(a-s)m+\frac{(a-2s)(a-2s-1)}{2}}.
\]
Then we have
\begin{eqnarray*}
&{}&[a+1]\langle -1 \rangle^{(a+1)}\lr{-1}{1}^{\dv{m}}\tvac 
=\pbw{-1}\langle -1 \rangle^{(a)}\lr{-1}{1}^{\dv{m}}\tvac \\
&&\hs{10ex}=\langle -1 \rangle \sum_{s=0}^{\lfloor{a/2}\rfloor}c_s
\langle 1 \rangle^{(a-2s)}\lr{-1}{1}^{\dv{m+s}}\tvac \\
&&\hs{10ex}=\sum_{s=0}^{\lfloor{a/2}\rfloor}c_s
\bl\{[2(m+s+1)]\langle 1 \rangle^{(a-2s-1)}
\lr{-1}{1}^{\dv{m+s+1}}\\
&&\hs{20ex}+q^{a-2s-2(m+s)}[a-2s+1]\langle 1 \rangle^{(a-2s+1)}
\lr{-1}{1}^{\dv{m+s}}\br\}\tvac.
\end{eqnarray*}
In the right-hand-side, the coefficients of 
$\langle 1 \rangle^{a+1-2r}\lr{-1}{1}^{\dv{ m+r}}\tvac$ are
\begin{eqnarray*}
&{}&[2(m+r)]c_{r-1}+q^{a-2m-4r}[a-2r+1]c_r \\
&&=\prod_{\nu=1}^{r}\dfrac{[2m+2\nu]}{[2\nu]}
q^{-2(a-r+1)m+\frac{(a-2r)(a-2r+1)}{2}} 
\Bigl([2r]q^{a-2r+1}+[a-2r+1]q^{-2r}\Bigr) \\
&&=[a+1]\prod_{\nu=1}^{r}\dfrac{[2m+2\nu]}{[2\nu]}q^{-2(a-r+1)m+
\frac{(a-2r)((a-2r+1)}{2}}.
\end{eqnarray*}
Hence we obtain (1).

We prove (2) by the induction on $n$. 
We use the following notation
for short:
\[
(i,j,t,a)\seteq\langle k \rangle^{(i)}\lr{-k+2}{k}^{(j)}
\lr{-k}{k}^{\dv{t}}\lr{-k+2}{k-2}^{\dv{a}}\tvac.
\]
Then Lemma~\ref{lem:ijta}
implies that
\eqn
\pbw{-k}(i,j,t,a)&=&
[2t+2](i,j-1,t+1,a) \nn\\
&&+[j+1]q^{j-2t}(i,j+1,t,a-1)\nn \\
&&+[i+1]q^{2a-2t}(i+1,j,t,a).
\eneqn
Hence,  by assuming (2) for $n$, we have
\begin{eqnarray*}
&{}&[n+1]\pbw{-k}^{(n+1)}\pbw{-k+2,k-2}^{\dv{a}}\tvac=
\langle -k \rangle \langle -k \rangle^{(n)}\lr{-k+2}{k-2}^{\dv{a}}\tvac \\
&&=\quad\sum_{u=0}^{n}\sum_{i+j+2t=n,j+t=u}\left\{
\begin{array}{l}
[2t+2]q^{2ai+\frac{j(j-1)}{2}-i(t+u)}(i,j-1,t+1,a-u) \\
+[j+1]q^{2ai+\frac{j(j-1)}{2}-i(t+u)+j-2t}(i,j+1,t,a-u-1) \\
+[i+1]q^{2ai+\frac{j(j-1)}{2}-i(t+u)+2a-2u-2t}(i+1,j,t,a-u)
\end{array}
\right\}.
\end{eqnarray*}
Then in the right hand side, the coefficients of $(i',j',t',a-u')$ satisfying $i'+j'+2t'=n+1,j'+t'=u'$ are
\begin{eqnarray*}
&{}&[2t']q^{2ai'+\frac{(j'+1)j'}{2}-i'(t'-1+u')}+[j']q^{2ai'+\frac{(j'-1)(j'-2)}{2}-i'(t'+u'-1)+j'-1-2t'} \\
&{}&\hs{20ex}+[i']q^{2a(i'-1)+\frac{j'(j'-1)}{2}-(i'-1)(t'+u')+2a-2u'-2t'} \\
&&\qquad=q^{2ai'+\frac{j'(j'-1)}{2}-i'(t'+u')} 
\Bigl([2t']q^{j'+i'}+[j']q^{i'-2t'}+[i']q^{-(t'+u')}\Bigr)\\
&&\qquad=q^{2ai'+\frac{j'(j'-1)}{2}-i'(t'+u')} [n+1].
\end{eqnarray*}
We can prove (3) similarly as above. 
\end{proof}

\subsection{Actions of $E_k$, $F_k$ on the PBW basis}
For a $\theta$-restricted multisegment $\mb$, we set
$$\tP(\mb)=P_\theta(\mb)\tvac.$$
We understand $\tP(\mb)=0$ if $\mb$ is not a multisegment.

\begin{thm} \label{th:F-k}
For $k\in I_{>0}$ and 
a $\theta$-restricted multisegment 
$\mb=\sum_{-j\le i\le j}m_{i,j}\pbw{i,j}$, we have
\begin{eqnarray*}
&&F_{-k}\tP(\mb) \\
&&=\sum_{\ell>k}[m_{-k,\ell}+1]
q^{\sum\limits_{\ell'>\ell}(m_{-k+2,\ell'}-m_{-k,\ell'})}
\tP(\mb-\pbw{-k+2,\ell}+\pbw{-k,\ell}) \\
&&\ +q^{\sum\limits_{\ell>k}(m_{-k+2,\ell}-m_{-k,\ell})} 
[2m_{-k,k}+2]\tP(\mb-\pbw{-k+2,k}+\pbw{-k,k}) \\[1ex]
&&+q^{\sum\limits_{\ell>k}(m_{-k+2,k}-m_{-k,k})
+m_{-k+2,k}-2m_{-k,k}}[m_{-k+2,k}+1]
\tP(\mb-\delta_{k\not=1}\pbw{-k+2,k-2}+\pbw{-k+2,k}) \\[1ex]
&&+\kern-3ex\sum\limits_{-k+2<i\le k}\kern-2ex
q^{\sum\limits_{\ell>k}(m_{-k+2,k}-m_{-k,k})+
2m_{-k+2,k-2}-2m_{-k,k}+\kern-2ex\sum\limits_{-k+2<j<i}
(m_{j,k-2}-m_{j,k})}\\[-1ex]
&&\hs{40ex}\times[m_{i,k}+1]
\tP(\mb-\delta_{i<k}\pbw{i,k-2}+\pbw{i,k}). 
\end{eqnarray*}
\end{thm}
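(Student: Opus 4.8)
The plan is to compute $F_{-k}\tP(\mb)=\pbw{-k}\,P_\theta(\mb)\,\tvac$ by pushing the single factor $\pbw{-k}$ (recall $F_{-k}$ acts on $\tVt$ as left multiplication by $f_{-k}=\pbw{-k}$) to the right through the ordered product $P_\theta(\mb)=\overrightarrow{\prod}\pbw{i,j}^{\dv{m_{ij}}}$, and then resolving what happens when it, or segments it produces, reach $\tvac$. The first observation, immediate from Proposition~\ref{pp1}\,\eqref{sub0} and \eqref{sub2}, is that $\pbw{-k}$ commutes \emph{with no $q$-shift} past every factor $\pbw{i,j}$ of $P_\theta(\mb)$ whose left endpoint $i$ is neither $-k$ nor $-k+2$; since $\mb$ is $\theta$-restricted, the only ``active'' factors left are $\pbw{-k+2,\ell}^{\dv{m_{-k+2,\ell}}}\pbw{-k,\ell}^{\dv{m_{-k,\ell}}}$ for $\ell\ge k$ (for each $\ell$ these two are consecutive in PBW order) and, when $k>1$, the factor $\pbw{-k+2,k-2}^{\dv{m_{-k+2,k-2}}}$. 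So I will slide $\pbw{-k}$ downward until it meets these, processing them in decreasing order of right endpoint.

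At the $\ell$-th active block I apply Lemma~\ref{lem1}: part (1) at $\pbw{-k+2,\ell}^{(a)}\pbw{-k,\ell}^{(b)}$ for $\ell>k$, part (2) at $\pbw{-k+2,k}^{(a)}\pbw{-k,k}^{\dv b}$, and part (3) at $\pbw{-k+2,k-2}^{\dv d}$. Each produces two summands: a ``conversion'' summand in which one $\pbw{-k+2,\ell}$ becomes a $\pbw{-k,\ell}$ (respectively $\pbw{-k+2,k-2}$ becomes $\pbw{-k,k-2}$), with coefficient $[m_{-k,\ell}+1]$, $[2m_{-k,k}+2]$, or $(q^{d}+q^{-d})^{-1}$, and $\pbw{-k}$ consumed; and a ``pass-through'' summand carrying $\pbw{-k}$ further right with an extra power $q^{m_{-k+2,\ell}-m_{-k,\ell}}$, $q^{m_{-k+2,k}-m_{-k,k}}$, or $q^{m_{-k+2,k-2}}$. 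Always taking the pass-through summand to reach the next block, the accumulated powers build up exactly the exponents $\sum_{\ell'>\ell}(m_{-k+2,\ell'}-m_{-k,\ell'})$ (etc.) of the statement; the conversion summands at $\ell>k$, at $\ell=k$, and at $\ell=k-2$ give the first, second, and third families of terms, the divided-power normalizations $\pbw{i}{j}^{\dv m}$ and the merging rule $\pbw{i,j}\,\pbw{i,j}^{(m)}=[m+1]\pbw{i,j}^{(m+1)}$ accounting for the brackets. This step is essentially the content of Lemma~\ref{lem:ijta} together with \eqref{eq:45}.

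It remains to treat the fully passed-through copy of $\pbw{-k}$ and the auxiliary $\pbw{-k,k-2}$ produced by Lemma~\ref{lem1}(3). Everything to the right of the $(k-2)$-block has right endpoint $\le k-4$, so by Proposition~\ref{pp1}\,\eqref{sub0},\eqref{sub2} both of these commute down to $\tvac$, where Lemma~\ref{lemq-1}(i) gives $\pbw{-k}\tvac=\pbw{k}\tvac$ and Lemma~\ref{lemq-1}(ii) gives $\pbw{-k,k-2}\tvac=q^{-1}\pbw{-k+2,k}\tvac$. Now $\pbw{k}$ (resp.\ $\pbw{-k+2,k}$) must be moved back left to its PBW slot: crossing the factors $\pbw{i,k-2}^{\dv{m_{i,k-2}}}$ it either converts one of them into $\pbw{i,k}$ (Lemma~\ref{lem1}(4) for $-k+2<i<k$, Lemma~\ref{lem1}(5) for $i=-k+2$, the latter feeding again into the third family) or passes through with a power $q^{m_{i,k-2}}$; reaching the top of the $k$-block it merges into $\pbw{k,k}^{\dv{m_{k,k}}}$, giving the $i=k$ term. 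The $q$-shifts picked up from the commutations \eqref{sub3}, \eqref{sub4} inside the $k$- and $(k-2)$-blocks assemble into the remaining exponents $2m_{-k+2,k-2}-2m_{-k,k}+\sum_{-k+2<j<i}(m_{j,k-2}-m_{j,k})$. The case $k=1$ is the degenerate version: no $\pbw{-k+2,k-2}$-factor and an empty last sum, so it follows directly from Proposition~\ref{prop:div}(1) (equivalently Lemma~\ref{lem1}(2) and Lemma~\ref{lemq-1}).

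The main obstacle will be precisely this exponent and coefficient bookkeeping in the last two paragraphs: one must verify that the product of the accumulated $q$-powers from the honest and $q$-commutations, the $q$-powers and brackets coming out of Lemma~\ref{lem1}(1)--(5), the reflection factor $q^{-1}$ from Lemma~\ref{lemq-1}(ii), and the merging factors $[m+1]$ combine to exactly the expressions written. In particular the third family of terms receives contributions both from Lemma~\ref{lem1}(3) on the descent and from the Lemma~\ref{lem1}(5) step of the return trip, and one must check that these add up correctly; this is already packaged cleanly in \eqref{eq:45}, so no genuinely new cancellation is needed, only careful accounting.
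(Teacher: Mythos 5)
Your proposal takes essentially the same approach as the paper: push $\pbw{-k}$ rightward through the PBW-ordered product, using Lemma~\ref{lem1}(1) at the blocks with $\ell>k$, Lemma~\ref{lem1}(2) at $\ell=k$, the packaged formula \eqref{eq:45} at the $\pbw{-k+2,k-2}^{\dv{m_{-k+2,k-2}}}$ block, and then Lemma~\ref{lem1}(4) together with the $q$-commutations of Proposition~\ref{pp1} for the leftward return of $\pbw{k}$ into $\Pt(\mb_2)$. The paper merely streamlines the bookkeeping by first rewriting $\tP(\mb)=\Pt(\mb_1)\Pt(\mb_2)\Pt(\mb_3)\pbw{-k+2,k-2}^{\dv{m_{-k+2,k-2}}}\tvac$ and processing each factor in turn, but the underlying computation and the lemmas invoked are the same as yours.
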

\begin{proof}
We divide $\mb$ into four parts,
$\mb=\mb_1+\mb_2+\mb_3+\delta_{k\not=1}m_{-k+2,k-2}\pbw{-k+2,k-2}$,
where
$\mb_1=\sum_{j>k}m_{i,j}\pbw{i,j}$,
$\mb_2=\sum_{j=k}m_{i,j}\pbw{i,j}$,
$\mb_3=\sum_{-k+2<i\le j\le k-2}m_{i,j}\pbw{i,j}$.
Then Proposition~\ref{pp1} implies
$$\tP(\mb)=\Pt(\mb_1)\Pt(\mb_2)\Pt(\mb_3)
\pbw{-k+2,k-2}^{\dv{m_{-k+2,k-2}}}\tvac.$$
If $k=1$, we understand $\pbw{-k+2,k-2}^{\dv{n}}=1$.
By Lemma~\ref{lem1} (1), we have
\eqn
&&\pbw{-k}\Pt(\mb_1)=
\sum_{\ell>k}q^{\sum_{\ell'>\ell}(m_{-k+2,\ell'}-m_{-k,\ell'})}
[m_{-k,\ell}+1]\Pt(\mb_1-\pbw{-k+2,\ell}+\pbw{-k,\ell})\\
&&\hs{20ex}+q^{\sum_{\ell>k}(m_{-k+2,\ell}-m_{-k,\ell})}
\Pt(\mb_1)\pbw{-k},
\eneqn
and  Lemma~\ref{lem1} (2) implies
\eqn
&&\pbw{-k}\Pt(\mb_2)=
[2m_{-k,k}+2]\Pt(\mb_2-\pbw{-k+2,k}+\pbw{-k,k})\\
&&\hs{20ex}+q^{m_{-k+2,k}-m_{-k,k}}
\Pt(\mb_2)\pbw{-k}.
\eneqn
Since we have
$\pbw{-k}\Pt(\mb_3)=\Pt(\mb_3)\pbw{-k}$,
we obtain
\eq
&&
\ba{ll}\pbw{-k}\tP(\mb)=&
\sum_{\ell>k}q^{\sum_{\ell'>\ell}(m_{-k+2,\ell'}-m_{-k,\ell'})}
[m_{-k,\ell}+1]\tP(\mb-\pbw{-k+2,\ell}+\pbw{-k,\ell})\\[1ex]
&+q^{\sum_{\ell>k}(m_{-k+2,\ell}-m_{-k,\ell})}[2m_{-k,k}+2]
\tP(\mb-\pbw{-k+2,k}+\pbw{-k,k})\\[1ex]
&+q^{\sum_{\ell\ge k}(m_{-k+2,\ell}-m_{-k,\ell})}
\Pt(\mb_1+\mb_2+\mb_3)\pbw{-k}
\pbw{-k+2,k-2}^{\dv{m_{-k+2,k-2}}}\tvac.
\ea
\label{eq:FtP1}
\eneq
By \eqref{eq:45}, we have
\eqn
\pbw{-k}\pbw{-k+2,k-2}^{\dv{m_{-k+2,k-2}}}\tvac
&=&\pbw{-k+2,k}\pbw{-k+2,k-2}^{\dv{m_{-k+2,k-2}-1}}\tvac\\
&&\hs{3ex}+\delta_{k\not=1}
q^{2m_{-k+2,k-2}}\pbw{k}\pbw{-k+2,k-2}^{\dv{m_{-k+2,k-2}}}\tvac.
\eneqn
Hence the last term in \eqref{eq:FtP1} is equal to
\eqn
&&q^{\sum_{\ell\ge k}(m_{-k+2,\ell}-m_{-k,\ell})-m_{-k,k}}
[m_{-k+2,k}+1]\tP(\mb-\delta_{k\not=1}\pbw{-k+2,k-2}+\pbw{-k+2,k})\\
&&
+\delta_{k\not=1}q^{\sum_{\ell\ge k}(m_{-k+2,\ell}-m_{-k,\ell})+2m_{-k+2,k-2}}
\Pt(\mb_1+\mb_2+\mb_3)\pbw{k}
\pbw{-k+2,k-2}^{\dv{m_{-k+2,k-2}}}\tvac.
\eneqn
For $k\not=1$,
Lemma~\ref{lem1} (4) implies
\eqn
&&\Pt(\mb_3)\pbw{k}
=\sum_{-k+2<i\le k}q^{\sum_{-k+2<j<i}m_{j,k-2}}
\pbw{i,k}\Pt(\mb_3-\delta_{i<k}\pbw{i,k-2}),
\eneqn
and Proposition~\ref{pp1} implies
\eqn
&&\Pt(\mb_2)\pbw{i,k}=
q^{-\sum_{j<i}m_{j,k}}[m_{i,k}+1]\Pt(\mb_2+\pbw{i,k}).
\eneqn
Hence we obtain 
\eqn
&&\Pt(\mb_1)\Pt(\mb_2)\Pt(\mb_3)\pbw{k}
\pbw{-k+2,k-2}^{\dv{m_{-k+2,k-2}}}\tvac\\
&&\hs{5ex}=
\sum_{-k+2<i\le k}q^{\sum_{-k+2<j<i}m_{j,k-2}-\sum_{-k\le j<i}m_{j,k}}
[m_{i,k}+1]\tP(\mb-\delta_{i<k}\pbw{i,k-2}+\pbw{i,k}).
\eneqn
Thus we obtain the desired result.
\QED

\begin{thm} \label{th:E-k}
For $k\in I_{>0}$ and 
a $\theta$-restricted multisegment 
$\mb=\kern-1ex\sum\limits_{-j\le i\le j}\kern-1ex m_{i,j}\pbw{i,j}$, we have
\eqn
&&E_{-k}\tP(\mb) \\
&&=(1-q^2)\sum_{\ell>k}q^{1+\sum\limits_{\ell'\ge\ell}
(m_{-k+2,\ell'}-m_{-k,\ell'})}
[m_{-k+2,\ell}+1]\tP(\mb-\pbw{-k,\ell}+\pbw{-k+2,\ell}) \\
&&
+(1-q^2)q^{1+\suml_{\ell>k}(m_{-k+2,\ell}-m_{-k,\ell})+m_{-k+2,k}-2m_{-k,k}}
[m_{-k+2,k}+1]\tP(\mb-\pbw{-k,k}+\pbw{-k+2,k})\\
&&+(1-q^2)\kern-2ex\sum_{-k+2<i\le k-2}\kern-2ex
q^{1+\suml_{\ell>k}(m_{-k+2,\ell}-m_{-k,\ell})+2m_{-k+2,k-2}-2m_{-k,k}
+\sum\limits_{-k+2<i'\le i}(m_{i,k-2}-m_{i'k})}\\[-2ex]
&&\hs{30ex}\times
[m_{i,k-2}+1]\tP(\mb-\pbw{i,k}+\pbw{i,k-2}) \\[1ex]
&&+\delta_{k\not=1}
(1-q^2)q^{1+\suml_{\ell>k}(m_{-k+2,\ell}-m_{-k,\ell})
+2m_{-k+2,k-2}-2m_{-k,k}}\\[-1ex]
&&\hs{20ex}\times
[2(m_{-k+2,k-2}+1)]\tP(\mb-\pbw{-k+2,k}+\pbw{-k+2,k-2})\\
&&+q^{\suml_{\ell>k}(m_{-k+2,\ell}-m_{-k,\ell})-2m_{-k,k}+\delta_{k\not=1}
{\displaystyle(}
1-m_{k,k}+2m_{-k+2,k-2}+\kern-2ex
\sum\limits_{-k+2<i\le k-2}(m_{i,k-2}-m_{i,k})
{\displaystyle)}}
\tP(\mb-\pbw{k}).
\eneqn
\end{thm}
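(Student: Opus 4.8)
The starting point is the commutation formula \eqref{eq:Ea}, which lets one compute $E_{-k}$ inside $\Uf[\gl_\infty]$. Applying it with $i=-k$ (so that $\theta(i)=k$) to $a=\Pt(\mb)$, and then using $E_{-k}\tvac=0$ and $T_{-k}\tvac=\tvac$, one gets
\[
E_{-k}\tP(\mb)=\bl(e'_{-k}\Pt(\mb)\br)\tvac+\bl(\Ad(t_{-k})(e^*_k\Pt(\mb))\br)\tvac.
\]
Thus the problem reduces to computing the two elements $e'_{-k}\Pt(\mb)$ and $e^*_k\Pt(\mb)$ of $\Uf[\gl_\infty]$, applying $\Ad(t_{-k})$ to the second, and evaluating both on $\tvac$. (Unlike the case of $E_k$ for $k>0$, the term $e^*_k\Pt(\mb)$ does not vanish here, since $\mb$ does contain segments ending at $k$; this is why the formula for $E_{-k}$ is the longer of the two.)

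I would then run the same bookkeeping as in the proof of Theorem~\ref{th:F-k}. Split $\mb=\mb_1+\mb_2+\mb_3+\delta_{k\not=1}m_{-k+2,k-2}\pbw{-k+2,k-2}$, with $\mb_1$ the segments of right endpoint $>k$, $\mb_2$ those of right endpoint $=k$, and $\mb_3$ the rest, so that $\Pt(\mb)=\Pt(\mb_1)\Pt(\mb_2)\Pt(\mb_3)\pbw{-k+2,k-2}^{\dv{m_{-k+2,k-2}}}$ by Proposition~\ref{pp1}. By Proposition~\ref{pp2}(ii), $e'_{-k}$ annihilates every segment except those starting at $-k$, which occur only in $\mb_1$ (the $\pbw{-k,\ell}$ with $\ell>k$) and in $\mb_2$ (the self-conjugate segment $\pbw{-k,k}$), while $e^*_k$ annihilates every segment except those ending at $k$, which all occur in $\mb_2$. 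Using the twisted Leibniz rules \eqref{eq:der}, distribute $e'_{-k}$ and $e^*_k$ over these ordered products, reading each intermediate $\Ad(t_{\pm k})$-scalar off Proposition~\ref{pp2}(i) (the simplification $\Ad(t_{-k})P(\mb')=q^{\sum_j(m_{-k+2,j}-m_{-k,j})}P(\mb')$ is convenient, since no segment of a $\theta$-restricted multisegment ends at $-k$ or $-k-2$), and using Proposition~\ref{pp1} to push each freshly created segment back into PBW order. For the self-conjugate factor one also needs $e'_{-k}\pbw{-k,k}^{\dv{m}}=\tfrac{[m]}{[2m]}(1-q^2)q^{1-m}\pbw{-k+2,k}\pbw{-k,k}^{\dv{m-1}}$ and $e^*_k\pbw{-k,k}^{\dv{m}}=\tfrac{[m]}{[2m]}(1-q^2)q^{1-m}\pbw{-k,k}^{\dv{m-1}}\pbw{-k,k-2}$, which follow from Proposition~\ref{pp2}(ii) and the definition of the modified divided power.

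Finally, evaluate the resulting elements of $\Uf[\gl_\infty]$ on $\tvac$. The terms that appear involve non-$\theta$-restricted segments such as $\pbw{-k,k-2}$ and $\pbw{-k+2,\ell}$; rewrite $\pbw{-j,i}\tvac$ by Lemma~\ref{lemq-1} (in particular $\pbw{-k,k-2}\tvac=q^{-1}\pbw{-k+2,k}\tvac$ for $k>1$), treat the special segment $\pbw{-k+2,k-2}$ via \eqref{eq:45}, and absorb the new single segments into the divided powers already present by Lemma~\ref{lem1}; this last step is exactly what produces the $q$-integer coefficients $[m_{-k+2,\ell}+1]$, $[m_{-k+2,k}+1]$, $[m_{i,k-2}+1]$ and $[2(m_{-k+2,k-2}+1)]$ in the statement. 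The two contributions $e'_{-k}\pbw{-k,k}^{\dv{m}}$ and $e^*_k\pbw{-k,k}^{\dv{m}}$ both land on $\tP(\mb-\pbw{-k,k}+\pbw{-k+2,k})$ and must be added, their sum collapsing --- via $\tfrac{[m]}{[2m]}=(q^m+q^{-m})^{-1}$ and $\pbw{-k,k-2}\tvac=q^{-1}\pbw{-k+2,k}\tvac$ --- to the single clean coefficient in the theorem; the $\delta_{k\not=1}$'s record whether $\pbw{-k+2,k-2}$ is a legitimate segment; and the diagonal case $e^*_k\pbw{k}^{(m_{k,k})}=q^{1-m_{k,k}}\pbw{k}^{(m_{k,k}-1)}$ yields the final term $\tP(\mb-\pbw{k})$. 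Collecting everything gives the six families of terms.

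The only genuine difficulty is the sheer size of the bookkeeping: keeping all the $q$-exponents (each a sum of multiplicities of the shape $\sum_{\ell'>\ell}(m_{-k+2,\ell'}-m_{-k,\ell'})$) correct through the distribution of the two derivations, the re-commutation of the new segments, the reductions on $\tvac$, and the final collection; verifying that no spurious terms survive; and correctly merging ordinary with modified divided powers at the self-conjugate segments $\pbw{-k,k}$ and $\pbw{-k+2,k-2}$. Nothing conceptual goes beyond the proof of Theorem~\ref{th:F-k}.
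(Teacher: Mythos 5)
Your proposal follows essentially the same route as the paper: reduce via \eqref{eq:Ea} with $i=-k$ to $(e'_{-k}\Pt(\mb))\tvac + (\Ad(t_{-k})(e^*_k\Pt(\mb)))\tvac$, distribute the twisted derivations using \eqref{eq:der} and Proposition~\ref{pp2}, restore PBW order via Proposition~\ref{pp1} and Lemma~\ref{lem1}, handle the non-$\theta$-restricted segments on $\tvac$ via Lemma~\ref{lemq-1}, and observe that the two contributions to $\tP(\mb-\pbw{-k,k}+\pbw{-k+2,k})$ merge through $[m]/[2m]=(q^m+q^{-m})^{-1}$ --- exactly the paper's coefficient collection. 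Two small points of bookkeeping: the paper actually uses a three-way split $\mb=\mb_1+\mb_2+\mb_3$ with $\pbw{-k+2,k-2}$ kept inside $\mb_3$ (rather than the four-way split of the $F_{-k}$ proof), and \eqref{eq:45} plays no role in this computation.
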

\begin{proof}
We shall divide $\mb$ into
$\mb=\mb_1+\mb_2+\mb_3$ where
$\mb_1=\ssum\limits_{i\le j, j>k}m_{i,j}\pbw{i,j}$
and $\mb_2=\ssum_{i\le k}m_{i,k}\pbw{i,k}$
and $\mb_3=\ssum_{i\le j<k}m_{i,j}\pbw{i,j}$.
By \eqref{eq:Ea} and Proposition \ref{pp2}, we have
\eq
&&\ba{lcr}
E_{-k}\tP(\mb)&=&\Bigl(\bl(e'_{-k}P_\theta(\mb_1)\br)
P_\theta(\mb_2+\mb_3)+(\Ad(t_{-k})P_\theta(\mb_1))
(e'_{-k}P_\theta(\mb_2+\mb_3))\\[1ex]
&&+\Ad(t_{-k})\left\{
P_\theta(\mb_1)\bl(e_k^*\Pt(\mb_2)\br)
\Ad(t_k)\Pt(\mb_3)
\right\}\Bigr)\tvac.
\ea\label{eq:E-k}
\eneq
By Proposition~\ref{pp2}, the first term is 
\begin{eqnarray}
&&
\ba{l}
\bl(e'_{-k}P_\theta(\mb_1)\br)P_\theta(\mb_2+\mb_3)\\[1ex]
\quad=(1-q^2)\sum_{\ell>k}
q^{1+\sum\limits_{\ell'\ge\ell}(m_{-k+2,\ell'}-m_{-k,\ell'})}\\[1ex]
\hs{20ex}
\times[m_{-k+2,\ell}+1]P_\theta(\mb-\pbw{-k,\ell}+\pbw{-k+2,\ell}). 
\ea
\label{217eq1}
\end{eqnarray}
The second term is 
\begin{eqnarray}
&{}&(\Ad(t_{-k})P_\theta(\mb_1))(e'_{-k}P_\theta(\mb_2+\mb_3)) \nonumber \\
&&=q^{\sum_{\ell>k}(m_{-k+2,\ell}-m_{-k,\ell})}
\dfrac{[m_{-k,k}][m_{-k+2,k}+1]}{[2m_{-k,k}]}
(1-q^2)q^{1-m_{-k,k}+m_{-k+2,k}}\nn\\
&&\hs{20ex}\times P_\theta(\mb-\pbw{-k,k}+\pbw{-k+2,k}). 
\label{217eq2}
\end{eqnarray}
Let us calculate the last part of
\eqref{eq:E-k}. 
We have 
\eqn
&&\Ad(t_{-k})\Bigl(
P_\theta(\mb_1)\bl(e_k^*\Pt(\mb_2)\br)
\Ad(t_k)\Pt(\mb_3)\Bigr)\\
&&\qquad=
q^{\sum\limits_\ell(m_{-k+2,\ell}-m_{-k,\ell})
+\sum_{i\le k-2}m_{i,k-2}-\delta_{k=1}}
P_\theta(\mb_1)\bl(e_k^*\Pt(\mb_2)\br)
\Pt(\mb_3).
\eneqn
We have
\eqn
e_k^*\Pt(\mb_2)&=&
q^{1-m_k-\sum\limits_{i<k}m_{i,k}}
P_\theta(\mb_2-\pbw{k})\\
&&+(1-q^2)\sum\limits_{-k<i<k}
q^{1-m_{i,k}-\sum\limits_{i'<i}m_{i',k}}P_\theta(\mb_2-\pbw{i,k})
\pbw{i,k-2}\\
&&+\dfrac{[m_{-k,k}]}{[2m_{-k,k}]}(1-q^2)q^{1-m_{-k,k}}P(\mb_2-\pbw{-k,k})
\lr{-k}{k-2}.
\eneqn
For $-k<i<k$, we have
\eqn
&&\pbw{i,k-2}\Pt(\mb_3)
=q^{-\sum\limits_{i'>i}m_{i',k-2}}
[(1+\delta_{i=-k+2})(m_{i,k-2}+1)]\Pt(\mb_3+\pbw{i,k-2}).
\eneqn
By Lemma~\ref{lemq-1}, we have
\eqn
\pbw{-k,k-2}\Pt(\mb_3)\tvac
&=&q^{-\sum\limits_{-k+2\le k\le k-2}m_{i,k-2}}
\Pt(\mb_3)\pbw{-k,k-2}\tvac\\
&=&
q^{-\sum\limits_{-k+2\le k\le k-2}m_{i,k-2}-\delta_{k\not=1}}
\Pt(\mb_3)\pbw{-k+2,k}\tvac\\
&=&q^{-m_{-k+2,k-2}-\sum\limits_{-k+2\le i\le k-2}m_{i,k-2}-\delta_{k\not=1}}
\pbw{-k+2,k}\Pt(\mb_3)\tvac.
\eneqn
Hence we obtain
\eqn
&&\Pt(\mb_1)\bl(e_k^*\Pt(\mb_2)\br)\Pt(\mb_3)\tvac\\
&&=q^{1-\sum\limits_{i\le k}m_{i,k}}\tP(\mb-\pbw{k})\\
&&+(1-q^2)\sum\limits_{-k+2<i\le k-2}
q^{1-\sum\limits_{i'\le i}m_{i',k}-\sum\limits_{i'>i}m_{i',k-2}}\\
&&\hs{10ex}\times
[m_{i,k-2}+1]\tP(\mb-\pbw{i,k}+\pbw{i,k-2})\\[1ex]
&&+(1-q^2)
\delta_{k\not=1}
q^{1-m_{-k,k}-m_{-k+2,k}-\sum\limits_{-k+2<i}m_{i,k-2}}\\
&&\hs{10ex}\times
[2(m_{-k+2,k-2}+1)]\tP(\mb-\pbw{-k+2,k}+\pbw{-k+2,k-2})\\
&&+(1-q^2)q^{2(1-m_{-k,k})-m_{-k+2,k-2}-\sum\limits_{-k+2\le i\le k-2}m_{i,k-2}
-\delta_{k\not=1}}\\
&&\hs{10ex}\times
\dfrac{[m_{-k+2,k}+1][m_{-k,k}]}{[2m_{-k,k}]}P(\mb-\pbw{-k,k}+\pbw{-k+2,k}).
\eneqn
Hence the coefficient of $\tP(\mb-\pbw{k})$
in $E_{-k}\tP(\mb)$ is
\eqn
&&q^{\sum\limits_\ell(m_{-k+2,\ell}-m_{-k,\ell})
+\sum\limits_{i\le k-2}m_{i,k-2}-\delta_{k=1}+1
-\sum\limits_{i\le k}m_{i,k}}\\
&&=q^{\sum\limits_{\ell>k}(m_{-k+2,\ell}-m_{-k,\ell})
-2m_{-k,k}+\delta_{k\not=1}
{\displaystyle(}
1-m_{k,k}+2m_{-k+2,k-2}+\sum\limits_{-k+2<i\le k-2}(m_{i,k-2}-m_{i,k})
{\displaystyle)}}.
\eneqn
The coefficient of $\tP(\mb-\pbw{-k,k}+\pbw{-k+2,k})$
in $E_{-k}\tP(\mb)$ is
\eqn
&&(1-q^2)q^{1+\sum\limits_{\ell\ge k}(m_{-k+2,\ell}-m_{-k,\ell})}
\dfrac{[m_{-k,k}][m_{-k+2,k}+1]}{[2m_{-k,k}]}\\
&&+(1-q^2)q^{\sum\limits_\ell(m_{-k+2,\ell}-m_{-k,\ell})
+\sum\limits_{i\le k-2}m_{i,k-2}-\delta_{k=1}
+2(1-m_{-k,k})-m_{-k+2,k-2}-\sum\limits_{-k+2\le i\le k-2}m_{i,k-2}
-\delta_{k\not=1}}\\
&&\hs{10ex}\times
\dfrac{[m_{-k+2,k}+1][m_{-k,k}]}{[2m_{-k,k}]}\\
&&=
(1-q^2)q^{1+\sum_{\ell\ge k}(m_{-k+2,\ell}-m_{-k,\ell})}
\dfrac{[m_{-k,k}][m_{-k+2,k}+1]}{[2m_{-k,k}]}
(1+q^{-2m_{-k,k}})\\
&&=(1-q^2)q^{1-m_{-k,k}+\sum_{\ell\ge k}(m_{-k+2,\ell}-m_{-k,\ell})}
[m_{-k+2,k}+1]\\
&&=(1-q^2)q^{1+m_{-k+2,k}-2m_{-k,k}+\sum_{\ell>k}(m_{-k+2,\ell}-m_{-k,\ell})}
[m_{-k+2,k}+1].
\eneqn
For $-k+2<i\le k-2$,
the coefficient of $\tP(\mb-\pbw{i,k}+\pbw{i,k-2})$
in $E_{-k}\tP(\mb)$ is
\eqn
&&(1-q^2)q^{\sum\limits_\ell(m_{-k+2,\ell}-m_{-k,\ell})
+\sum\limits_{i'\le k-2}m_{i',k-2}-\delta_{k=1}
+1-\sum\limits_{i'\le i}m_{i',k}-\sum\limits_{i'>i}m_{i',k-2}}
[m_{i,k-2}+1]\\
&&=(1-q^2)q^{1+\sum\limits_{\ell>k}(m_{-k+2,\ell}-m_{-k,\ell})
+2m_{-k+2,k-2}-2m_{-k,k}+\sum\limits_{-k+2<i'\le i}(m_{i,k-2}-m_{i'k})}
[m_{i,k-2}+1].
\eneqn
Finally, for $k\not=1$,
the coefficient of $\tP(\mb-\pbw{-k+2,k}+\pbw{-k+2,k-2})$
in $E_{-k}\tP(\mb)$ is
\eqn
&&(1-q^2)q^{\sum\limits_\ell(m_{-k+2,\ell}-m_{-k,\ell})
+\sum\limits_{i\le k-2}m_{i,k-2}-\delta_{k=1}
+1-m_{-k,k}-m_{-k+2,k}-\sum\limits_{-k+2<i}m_{i,k-2}}
[2(m_{-k+2,k-2}+1)]\\
&&=(1-q^2)q^{1+\sum\limits_{\ell>k}(m_{-k+2,\ell}-m_{-k,\ell})
+2m_{-k+2,k-2}-2m_{-k,k}}
[2(m_{-k+2,k-2}+1)].
\eneqn

\QED

\begin{thm}\label{th:EF}
For $k>0$ and $\mb\in \Mt$, we have
\begin{eqnarray*}
E_k\tP(\mb)&=&
\ba[t]{r}\sum_{\ell>k}(1-q^2) 
q^{1+\sum_{\ell'\ge\ell}(m_{k+2,\ell'}-m_{k,\ell'})}
[m_{k+2,\ell}+1]\tP(\mb-\pbw{k,\ell}+\pbw{k+2,\ell})\\[1ex]
+q^{1+\sum_{\ell>k}(m_{k+2,\ell}-m_{k,\ell})-m_{k,k}}
\tP(\mb-\pbw{k}),
\ea \\
F_k\tP(\mb)&=&\sum_{\ell\ge k}
q^{\sum_{\ell'>\ell}(m_{k+2,\ell'}-m_{k,\ell'})}
[m_{k,\ell}+1]\tP(\mb-\delta_{\ell\not=k}\pbw{k+2,\ell}+\pbw{k,\ell}).
\end{eqnarray*}
\end{thm}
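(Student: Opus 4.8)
The plan is to handle $E_k$ and $F_k$ (with $k>0$, the ``positive side'') in close analogy with the proofs of Theorems~\ref{th:F-k} and~\ref{th:E-k}, but exploiting the fact that here the $\theta$-structure essentially drops out: both actions reduce to a computation inside $\Uf[\gl_\infty]\subset\Btg$ followed by the projection onto $\tVt$. First I would record two consequences of $\mb$ being $\theta$-restricted. Every segment occurring in $\mb$ has right endpoint $\ge 1$, so no segment $\pbw{i,-k}$ occurs; hence, by the twisted Leibniz rule for $e^*_{-k}$ together with Proposition~\ref{pp2}(ii) (which forces the right endpoint to equal $-k$), we get $e^*_{-k}P_\theta(\mb)=0$. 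Moreover each $\theta$-special segment $\pbw{-j,j}$ in $\mb$ has left endpoint $-j<0<k$, so $e'_k\pbw{-j,j}^{\dv{m}}=0$ by Proposition~\ref{pp2}(ii), and by Proposition~\ref{pp1}(3) the box $\pbw{k}$ commutes with any $\pbw{i,j}$ satisfying $i<k<j$, in particular with every such $\pbw{-j,j}$ with $j>k$.

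For $F_k$, using the identification $F_k=f_k=\pbw{k}$ we have $F_k\tP(\mb)=\bl(\pbw{k}\,P_\theta(\mb)\br)\tvac$, so it suffices to rewrite $\pbw{k}\,P_\theta(\mb)$ in terms of the PBW monomials $P_\theta(\mb')$. I would push the box $\pbw{k}$ rightwards through the PBW-ordered product $P_\theta(\mb)$ towards the slot of segments with right endpoint $k$: by Proposition~\ref{pp1}(1), (3) and (5) the box moves past every factor up to a power of $q$, except that at each power $\pbw{k+2,\ell}^{(m_{k+2,\ell})}$ with $\ell\ge k+2$ Proposition~\ref{prop:div}(3) (case $n=1$) applies, either letting the box pass with a factor $q^{m_{k+2,\ell}}$ or absorbing it into a new segment $\pbw{k,\ell}$; that $\pbw{k,\ell}$ is adjacent to $\pbw{k,\ell}^{(m_{k,\ell})}$ and merges with it to give $[m_{k,\ell}+1]\,\tP(\mb-\pbw{k+2,\ell}+\pbw{k,\ell})$, and when the box finally reaches the $\pbw{k,k}$-slot it merges with $\pbw{k,k}^{(m_{k,k})}$ producing $[m_{k,k}+1]\,\tP(\mb+\pbw{k,k})$. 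Crucially the box never meets the $\theta$-special segment $\pbw{-k+2,k-2}$ (right endpoint $k-2<k$), nor any modified divided power, so that neither Lemma~\ref{lemq-1} nor the distinction between $\pbw{\cdot}^{(m)}$ and $\pbw{\cdot}^{\dv{m}}$ ever intervenes; collecting the accumulated $q$-powers yields the stated formula.

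For $E_k$, formula~\eqref{eq:Ea} together with $E_k\tvac=0$, $T_k\tvac=\tvac$ and the vanishing $e^*_{-k}P_\theta(\mb)=0$ gives
\[
E_k\tP(\mb)=\bl(e'_kP_\theta(\mb)\br)\tvac .
\]
Now $e'_k$ is a twisted derivation (formula~\eqref{eq:der}); expanding $e'_kP_\theta(\mb)$, only the factors $\pbw{k,\ell}^{(m_{k,\ell})}$ with $\ell\ge k$ are hit (Proposition~\ref{pp2}(ii), the $\theta$-special segments being killed), each hit producing $(1-q^2)q^{1-m_{k,\ell}}\pbw{k+2,\ell}\pbw{k,\ell}^{(m_{k,\ell}-1)}$ for $\ell>k$ and $q^{1-m_{k,k}}\pbw{k,k}^{(m_{k,k}-1)}$ for $\ell=k$; the created $\pbw{k+2,\ell}$ is adjacent to $\pbw{k+2,\ell}^{(m_{k+2,\ell})}$ and merges with it, while the factors to the left of each hit are evaluated via Proposition~\ref{pp2}(i), which only contributes a power of $q$. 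This produces exactly the two families of terms $\tP(\mb-\pbw{k,\ell}+\pbw{k+2,\ell})$ and $\tP(\mb-\pbw{k})$ in the assertion.

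The only genuine work is the $q$-exponent bookkeeping: collecting the powers of $q$ coming from each commutation in Proposition~\ref{pp1}, from each pass-through term in Proposition~\ref{prop:div}(3), and from the $\Ad(t_k)$-twists of Proposition~\ref{pp2}(i), and checking that they add up to the exponents $\sum_{\ell'>\ell}(m_{k+2,\ell'}-m_{k,\ell'})$, $1+\sum_{\ell'\ge\ell}(m_{k+2,\ell'}-m_{k,\ell'})$ and $1+\sum_{\ell>k}(m_{k+2,\ell}-m_{k,\ell})-m_{k,k}$ displayed in the statement. This is the same type of calculation as in Theorems~\ref{th:F-k} and~\ref{th:E-k}, but markedly shorter, since the $e^*_{\theta(i)}$-term vanishes and with it all appeals to Lemma~\ref{lemq-1} and to the modified divided powers $\pbw{-j,j}^{\dv{m}}$.
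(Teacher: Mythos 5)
Your proposal is correct and takes essentially the same approach as the paper's (very terse) proof, which simply cites $e^*_{-k}\Pt(\mb)=0$ together with Propositions~\ref{pp2} and~\ref{prop:div}; you have correctly unpacked that this reduces to a twisted-Leibniz expansion of $e'_kP_\theta(\mb)$ for $E_k$ and a rightward push of the box $\pbw{k}$ for $F_k$, with the $\theta$-special factors $\pbw{-j,j}^{\dv{m}}$ $(j\ge k)$ either killed by $e'_k$ or commuting with $\pbw{k}$ via Proposition~\ref{pp1}(3), so that the modified divided powers never enter the bookkeeping. (One small wording slip: the box does \emph{meet} the factors $\pbw{-j,j}^{\dv{m}}$ with $j>k$ as it moves right, but since it commutes with them the normalization $[\cdot]$ versus $(\cdot)$ is irrelevant, as you already observed in your opening paragraph.)
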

\begin{proof}
The first follows from $e^*_{-k}\Pt(\mb)=0$ and  Proposition~\ref{pp2},
and the second follows from Proposition~\ref{prop:div}.
\end{proof}

\section{Crystal basis of $\Vt$}
\subsection{Crystal structure on $\mathcal{M}_{\theta}$}
\label{subsec:KashMt}
We shall define the crystal structure on $\Mt$.
\begin{dfn}\label{def:crMt}
Suppose $k>0$. For a $\theta$-restricted multisegment 
$\mb=\suml_{-j\le i\le j}m_{i,j}\pbw{i,j}$, we set

\[
\varepsilon_{-k}(\mb)=\max\set{A_j^{(-k)}(\mb)}
{j\ge -k+2},
\]
where
\begin{eqnarray*}
A_j^{(-k)}(\mb)&=&\sum_{\ell\ge j}(m_{-k,\ell}-m_{-k+2,\ell+2})
\quad\text{for $j>k$,} \\
A_k^{(-k)}(\mb)&=&\sum_{\ell>k}(m_{-k,\ell}-m_{-k+2,\ell})
+2m_{-k,k}+\delta(\text{$m_{-k+2,k}$ is odd}), \\
A_j^{(-k)}(\mb)&=&\sum_{\ell>k}(m_{-k,\ell}-m_{-k+2,\ell})
+2m_{-k,k}-2m_{-k+2,k-2}+\kern-2ex\sum_{-k+2<i\le j+2}\kern-2ex m_{i,k}
-\kern-2ex\sum_{-k+2<i\le j}\kern-2ex m_{i,k-2} \\
&&\hs{50ex}\text{for $-k+2 \le j\le k-2$.}
\end{eqnarray*}
\bnum
\item
Let $n_f$ be the smallest
$\ell\ge-k+2$, with respect to the ordering
$\cdots> k+2>k>-k+2>\cdots>k-2$, such that $\eps_{-k}(\mb)=A_\ell^{(-k)}(\mb)$.
We define
\eqn
\tF_{-k}(\mb)&=&
\begin{cases}
\mb-\pbw{-k+2,n_f}+\pbw{-k,n_f}&\text{if $n_f>k$,}\\
\mb-\pbw{-k+2,k}+\pbw{-k,k}&\text{if $n_f=k$ and $m_{-k+2,k}$ is odd,}\\
\mb-\delta_{k\not=1}\pbw{-k+2,k-2}+\pbw{-k+2,k}
&\text{if $n_f=k$ and $m_{-k+2,k}$ is even,}\\
\mb-\delta_{n_f\not=k-2}\pbw{n_f+2,k-2}+\pbw{n_f+2,k}
&\text{if $-k+2\le n_f\le k-2$.}
\end{cases}\\
\eneqn

\item
If $\eps_{-k}(\mb)=0$, then $\tE_{-k}(\mb)=0$.
If $\eps_{-k}(\mb)>0$,  then
let $n_e$ be the largest $\ell\ge-k+2$, 
with respect to the above ordering, 
such that $\eps_{-k}(\mb)=A_\ell^{(-k)}(\mb)$.
We define
\eqn
\tE_{-k}(\mb)=
\begin{cases}
\mb-\pbw{-k,n_e}+\pbw{-k+2,n_e}&\text{if $n_e>k$,}\\
\mb-\pbw{-k,k}+\pbw{-k+2,k}
&\text{if $n_e=k$ and $m_{-k+2,k}$ is even,}\\
\mb-\pbw{-k+2,k}+\delta_{k\not=1}\pbw{-k+2,k-2}
&\text{if $n_e=k$ and $m_{-k+2,k}$ is odd,}\\
\mb-\pbw{n_e+2,k}+\delta_{n_e\not=k-2}\pbw{n_e+2,k-2}
&\text{if $-k+2\le n_e\le k-2$.}
\end{cases}&&
\eneqn
\ee
\end{dfn}


\begin{rem}
For $0<k \in I$, 
the actions of $\widetilde{E}_{-k}$ and $\widetilde{F}_{-k}$ on 
$\mb\in\mathcal{M}_\theta$ are described by the following algorithm.
\be[{\rm Step 1.}]
\item
Arrange segments in $\mb$ of the form
$\pbw{-k,j}$ $(j>k)$, $\pbw{-k+2,j}$ $(j>k)$,
$\pbw{i,k}$ $(-k\le i\le $k), $\pbw{i,k-2}$ $(-k+2\le i\le k-2)$
in the order
\eqn
&&\cdots,\pbw{-k,k+2},\pbw{-k+2,k+2},\;\pbw{-k,k},
\pbw{-k+2,k},\pbw{-k+2,k-2},\\
&&\hs{10ex}\pbw{-k+4,k},\pbw{-k+4,k-2},\cdots,
\pbw{k-2,k},\pbw{k-2,k-2},\pbw{k}.
\eneqn

\item Write signatures for each segment contained in 
$\mb$ by the following rules.
\bnum
\item If a segment is not $\lr{-k+2}{k}$, then
\begin{itemize}
\item{For $\lr{-k}{k}$, write $--$,}
\item{For $\lr{-k}{j}$ with $j> k$, write $-$,} 
\item{For $\lr{-k+2}{k-2}$ with $k>1$, write $++$,} 
\item{For $\lr{-k+2}{j}$ with $j>k$, write $+$,}
\item{For $\lr{j}{k}$ with $-k+2<j\le k$, write $-$,} 
\item{For $\lr{j}{k-2}$ with $-k+2<j\le k-2$, write $+$,}
\item{Otherwise, write no signature.} 
\end{itemize}
\item
For segments $m_{-k+2,k}\lr{-k+2}{k}$, 
if $m_{-k+2,k}$ is even, then write no signature, 
and if $m_{-k+2,k}$ is odd, then write $-+$. 
\ee
\item
In the resulting sequence of $+$ and $-$, 
delete a subsequence of the form $+-$ 
and keep on deleting until no such subsequence remains. 
\ee
Then we obtain a sequence of the form $-- \cdots -++ \cdots +$. 
\be[{\rm(1)}]
\item
$\varepsilon_{-k}(\mb)$ 
is the total number of $-$ in the resulting sequence. 
\item $\widetilde{F}_{-k}(\mb)$ is given as follows: 
\bnum
\item
if the leftmost $+$ corresponds to a segment $\lr{-k+2}{j}$ for $j >k$, 
then replace it with $\lr{-k}{j}$,
\item
if the leftmost $+$ corresponds to a segment 
$\lr{j}{k-2}$ for $-k+2\le j\le k-2$, 
then replace it with $\lr{j}{k}$,
\item
if the leftmost $+$ corresponds to segment $m_{-k+2,k}\lr{-k+2}{k}$, 
then replace one of the segments with $\lr{-k}{k}$,
\item if no $+$ exists, add a segment $\lr{k}{k}$ to $\mb$.
\ee
\item
$\widetilde{E}_{-k}(\mb)$ is given as follows:
\bnum
\item
if the rightmost $-$ corresponds to a segment $\lr{-k}{j}$ for $j\ge k$, 
then replace it with $\lr{-k+2}{j}$,
\item
if the rightmost $-$ corresponds to a segment $\lr{j}{k}$
for $-k+2<j<k$, 
then replace it with $\lr{j}{k-2}$,
\item
if the rightmost $-$ corresponds to segments $m_{-k+2,k}\lr{-k+2}{k}$, 
then replace one of the segment with $\lr{-k+2}{k-2}$,
\item
if the rightmost $-$ corresponds to a segment $\lr{k}{k}$ for $k>1$,
then delete it,
\item
if no $-$ exists, then $\widetilde{E}_{-k}(\mb)=0$. 
\ee
\ee
\end{rem}

\begin{exa}
\begin{enumerate}
\item We shall write $\{a,b\}$ for $a\lr{-1}{1}+b\langle 1 \rangle$. The following diagram is the part of the crystal graph of $\Bz$ that concerns only the $1$-arrows and the $(-1)$-arrows.
$$\xymatrix@R=.1em@C=2em{&&&&\{0,4\}\ar@<.2pc>[r]^{1}\ar@<-.2pc>[r]_{-1}&
\{0,5\}\cdots\\
&&\{0,2\}\ar@<.2pc>[r]^{1}\ar@<-.2pc>[r]_{-1}&\{0,3\}
\ar[ru]^{1}\ar[rd]|{-1}\\
\vac\ar@<.2pc>[r]^(.4){1}\ar@<-.2pc>[r]_(.35){-1}&
\{0,1\}\ar[ru]^{1}\ar[rd]_{-1}&&&\{1,2\}\ar@<.2pc>[r]^{1}\ar@<-.2pc>[r]_{-1}&\{1,3\}\cdots\\
&&\{1,0\}\ar@<.2pc>[r]^{1}\ar@<-.2pc>[r]_{-1}&\{1,1\}\ar[ru]|{1}\ar[rd]_{-1}\\
&&&&\{2,0\}\ar@<.2pc>[r]^{1}\ar@<-.2pc>[r]_{-1}&{\{2,1\}\cdots}
}$$
Especially the part of $(-1)$-arrows is the following diagram.
$$\xymatrix@C=6ex{
\{0,2n\}\ar[r]^(.45){-1}&
\{0,2n+1\}\ar[r]^(.55){-1}&
\{1,2n\}\ar[r]^(.45){-1}&
\{1,2n+1\}\ar[r]^(.45){-1}&
\{2,2n\}\cdots\cdots
}$$
\item The following diagram is the part of the crystal graph of 
$\CB_\theta(0)$ that concerns only the $(-1)$-arrows and the $(-3)$-arrows. 
This diagram is, as a graph, isomorphic to the crystal graph 
of $A_2$.

$$
\xymatrix@R=.1em@C=2em{&&&&2\lr{-1}{1}\\
&&&\lr{-1}{1}+\langle 1 \rangle \ar[ru]^{-1}\ar[rd]_{-3}\\
&& \lr{-1}{1}\ar[ru]^{-1}\ar[rd]_{-3} && \lr{-1}{3}+\langle 1 \rangle \\
& \langle 1 \rangle\ar[ru]^{-1}\ar[rdd]_{-3} 
&& \lr{-1}{3}\ar[ru]_{-1}\ar[rd]_{-3} & \\
&&&& \lr{-3}{3} \\
&& \lr{1}{3}\ar[ruu]^{-1}\ar[rdddd]_{-3} & \\
&&&& \pbw{3}+\pbw{-1,1}+\pbw{1} \\
&&& \langle 3 \rangle+\lr{-1}{1}\ar[ru]^(.4){-1}\ar[rd]_{-3} & \\
\vac\ar[ruuuuu]^{-1}\ar[rddddd]^{-3} &&&& \lr{-1}{3}+\langle 3 \rangle \\
&&& \pbw{1,3}+\langle 3 \rangle\ar[ru]_{-1}\ar[rd]_{-3} & \\
&&&& \pbw{1,3}+2\pbw{3}\\
&& \langle 3 \rangle+\langle 1 \rangle\ar[ruuuu]^{-1}\ar[rdd]_{-3} & \\
&&&& 2\langle 3\rangle+\pbw{-1,1}\\
& \langle 3 \rangle\ar[ruu]^{-1}\ar[rd]_{-3} 
&& 2\langle 3 \rangle+\langle 1 \rangle\ar[ru]_{-1}\ar[rd]_{-3} & \\
&& 2\langle 3 \rangle \ar[ru]^{-1}\ar[rd]_{-3} && 3\pbw{3}+\pbw{1}\\
&&&3\langle 3 \rangle \ar[ru]_{-1}\ar[rd]_{-3}\\
&&&&4\langle 3 \rangle
}$$
\item Here is the part of the crystal graph of $\Bz$
that concerns only the $n$-arrows and the $(-n)$-arrows
for an odd integer $n\ge 3$:
$$\xymatrix@C=6ex{
\vac\ar@<.2pc>[r]^(.5){n}\ar@<-.2pc>[r]_(.45){-n}&
\pbw{n}\ar@<.2pc>[r]^(.45){n}\ar@<-.2pc>[r]_(.4){-n}&
2\pbw{n}\ar@<.2pc>[r]^(.45){n}\ar@<-.2pc>[r]_(.4){-n}&
3\pbw{n}\ar@<.2pc>[r]^(.45){n}\ar@<-.2pc>[r]_(.4){-n}&
\cdots\cdots
}$$
\end{enumerate}
\end{exa}

\Lemma
For $k\in I_{>0}$, the data $\tE_{-k}$,
$\tF_{-k}$, $\eps_{-k}$
define a crystal structure on $\Mt$,
namely we have
\bnum
\item
$\tF_{-k}\Mt\subset \Mt$ and
$\tE_{-k} \Mt\subset \Mt\sqcup\{0\}$,
\item
$\tF_{-k}\tE_{-k}(\mb)=\mb$ if $\tE_{-k}(\mb)\not=0$, 
and $\tE_{-k}\circ\tF_{-k}=\id$,
\item 
$\eps_{-k}(\mb)=\max\set{n\ge0}{\tE_{-k}^n(\mb)\not=0}$ for any 
$\mb\in\Mt$.
\enum
\enlemma
\Proof
We shall first show that, for 
$\mb=\sum_{-j\le i\le j}m_{i,j}\pbw{i,j}\in \Mt$, 
$\tF_{-k}(\mb)$ is $\theta$-restricted,
$\tE_{-k}\tF_{-k}(\mb)=\mb$ and 
$\eps_{-k}(\tF_{-k}\mb)=\eps_{-k}(\mb)+1$.
Let $A_j\seteq A_j^{(-k)}(\mb)$ ($j\ge -k+2$) 
and let $n_f$ be as in Definition~\ref{def:crMt}.
Set $\mb'=\tF_{-k}\mb$.
Let $A'_j=A_j^{(-k)}(\mb')$
and let $n_e'$ be $n_e$ for $\mb'$.
\bnum
\item Assume $n_f>k$.
Since $A_{n_f}>A_{n_f-2}=A_{n_f}+m_{-k,n_f-2}-m_{-k+2,n_f}$,
we have $m_{-k,n_f-2}<m_{-k+2,n_f}$.
Hence $\mb'=\mb-\pbw{-k+2,n_f}+\pbw{-k,n_f}$ is $\theta$-restricted.
Then we have
$$A'_j=\begin{cases}A_j & \text{if $j>n_f$,}\\
A_j+1 & \text{if $j=n_f$,}\\
A_j+2 & \text{if $j<n_f$.}
\end{cases}
$$
Hence $\eps_{-k}(\mb')=A_{n_f}+1=\eps_{-k}(\mb)+1$
and $n'_e=n_f$, which implies $\mb=\tE_{-k}(\mb')$.

\item Assume $n_f=k$.
\be[(a)]
\item
If $m_{-k+2,k}$ is odd, then
$\mb'=\mb-\pbw{-k+2,k}+\pbw{-k,k}$ is $\theta$-restricted.
We have
$$A'_j=\begin{cases}A_j & \text{if $j>k$,}\\
A_j+1 & \text{if $j=k$,}\\
A_j+2 & \text{if $j<k$,}
\end{cases}
$$
Hence $\eps_{-k}(\mb')=\eps_{-k}(\mb)+1$
and $n'_e=k$, which implies $\mb=\tE_{-k}(\mb')$.
\item
Assume that $m_{-k+2,k}$ is even.
If $k\not=1$, then
$A_k>A_{-k+2}=A_k-2m_{-k+2,k-2}$, and hence $m_{-k+2,k-2}>0$.
Therefore
$\mb'=\mb-\delta_{k\not=1}\pbw{-k+2,k-2}+\pbw{-k+2,k}$ is $\theta$-restricted.
We have
$$A'_j=\begin{cases}A_j & \text{if $j>k$,}\\
A_j+1 & \text{if $j=k$,}\\
A_j+2 & \text{if $j<k$.}
\end{cases}
$$
Hence $\eps_{-k}(\mb')=\eps_{-k}(\mb)+1$
and $n'_e=k$, which implies $\mb=\tE_{-k}(\mb')$.
\ee
\item Assume $-k+2\le  n_f< k-2$.
Since $A_{n_f}>A_{n_f+2}=A_{n_f}+m_{n_f+4,k}-m_{n_f+2,k-2}$,
we have $m_{n_f+2,k-2}>m_{n_f+4,k}$.
Hence $\mb'=\mb-\pbw{n_f+2,k-2}+\pbw{n_f+2,k}$ is $\theta$-restricted.
Then we have
$$A'_j=\begin{cases}A_j & \text{if $j>n_f$,}\\
A_j+1 & \text{if $j=n_f$,}\\
A_j+2 & \text{if $j<n_f$.}
\end{cases}
$$
(Here the ordering is as in Definition~\ref{def:crMt} (i).)
Hence $\eps_{-k}(\mb')=\eps_{-k}(\mb)+1$
and $n'_e=n_f$, which implies $\mb=\tE_{-k}\mb'$.
\item Assume $n_f=k-2$.
It is obvious that $\mb'=\mb+\pbw{k}$ is $\theta$-restricted.
We have
$$A'_j=\begin{cases}A_j & \text{if $j\not=n_f$,}\\
A_j+1 & \text{if $j=n_f$.}
\end{cases}
$$
Hence $\eps_{-k}(\mb')=\eps_{-k}(\mb)+1$
and $n'_e=n_f$, which implies $\mb=\tE_{-k}(\mb')$.
\ee
Similarly, we can prove that
if $\eps_{-k}(\mb)>0$, then
$\tE_{-k}(\mb)$ is $\theta$-restricted and
$\tF_{-k}\tE_{-k}(\mb)=\mb$.
Hence we obtain the desired results.
\QED

\begin{dfn}
For $k\in I_{>0}$, we define
$\tF_{k}$, $\tE_k$ and $\eps_k$ by the same rule as in 
{\rm Definition~\ref{defKop}} for
$\tf_k$, $\te_k$ and $\eps_k$.
\end{dfn}

Since it is well-known that it gives a crystal structure on $\M$,
we obtain the following result.

\begin{thm}\label{th:Excr}
By $\tF_k$, $\tE_k$, $\eps_k$ \ro $k\in I$\rf,
$\Mt$ is a crystal, 
namely, we have 
\bnum
\item
$\tF_{k}\Mt\subset \Mt$ and
$\tE_{k} \Mt\subset \Mt\sqcup\{0\}$,
\item
$\tF_{k}\tE_{k}(\mb)=\mb$ if $\tE_{k}(\mb)\not=0$, 
and $\tE_{k}\circ\tF_{k}=\id$,
\item 
$\eps_{k}(\mb)=\max\set{n\ge0}{\tE_k^n(\mb)\not=0}$ for any 
$\mb\in\Mt$.
\enum
\end{thm}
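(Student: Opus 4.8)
The plan is to split the index set $I$ into its positive and negative parts and to treat them separately. For a negative index, that is, for $-k$ with $k\in I_{>0}$, the three assertions (i), (ii), (iii) are exactly the content of the Lemma proved just before this theorem, so nothing remains to be done in that case. Consequently the only part of Theorem~\ref{th:Excr} still requiring an argument concerns the positive indices $k\in I_{>0}$, and for such $k$ the operators $\tF_k$, $\tE_k$, $\eps_k$ are defined on $\Mt$ by exactly the same formulas (Definition~\ref{defKop}) as the operators $\tf_k$, $\te_k$, $\eps_k$ on the set $\M$ of \emph{all} multisegments.

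So the first step is to recall the well-known fact that $(\M,\tf_k,\te_k,\eps_k)$ is a crystal: by Theorem~\ref{thmgl} the map $\mb\mapsto P(\mb)\bmod qL(\infty)$ identifies $\M$ with $\CB(\infty)$ for $\gl_\infty$, and under this identification $\tf_k$ and $\te_k$ act as the modified root operators, whence $\tf_k\te_k(\mb)=\mb$ whenever $\te_k(\mb)\neq0$, $\te_k\tf_k(\mb)=\mb$, and $\eps_k(\mb)=\max\set{n\ge0}{\te_k^n(\mb)\neq0}$ for every $\mb\in\M$. Granting this, for $k>0$ the only point that really needs an argument is assertion (i), namely that $\tf_k$ maps $\Mt$ into $\Mt$ and that $\te_k$ maps $\Mt$ into $\Mt\sqcup\{0\}$; once this is established, (ii) and (iii) for $\mb\in\Mt$ follow at once, since then $\tf_k\te_k(\mb)$, $\te_k\tf_k(\mb)$, and all iterates $\te_k^n(\mb)$ again lie in $\Mt$, and the required identities are inherited from the corresponding ones in $\M$.

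To verify (i) for $k>0$, I would read off the effect of the operators from the formulas in Definition~\ref{defKop}: one has $\tf_k(\mb)=\mb-\delta_{k_f\neq k}\pbw{k+2,k_f}+\pbw{k,k_f}$ with $k_f\ge k$, so that $\tf_k$ either adjoins the segment $\pbw{k,k}$ to $\mb$ or replaces a segment $\pbw{k+2,\ell}$ of $\mb$ by $\pbw{k,\ell}$ for some $\ell\ge k$; likewise $\te_k(\mb)=\mb-\pbw{k,k_e}+\delta_{k_e\neq k}\pbw{k+2,k_e}$ with $k_e\ge k$, so $\te_k$ either deletes a segment $\pbw{k,k}$ from $\mb$ or replaces a segment $\pbw{k,\ell}$ by $\pbw{k+2,\ell}$ for some $\ell\ge k$. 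Since $k>0$, each of the segments $\pbw{k,k}$, $\pbw{k,\ell}$, $\pbw{k+2,\ell}$ with $\ell\ge k$ has left endpoint no smaller than minus its right endpoint, hence is $\theta$-restricted; and deleting a segment from a $\theta$-restricted multisegment plainly leaves it $\theta$-restricted. As $\tf_k(\mb)$ and $\te_k(\mb)$ are genuine multisegments (which is part of the statement that $\M$ is a crystal), they are therefore $\theta$-restricted, so $\tf_k\Mt\subset\Mt$ and $\te_k\Mt\subset\Mt\sqcup\{0\}$. Combining this with the negative-index case completes the proof.

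I do not anticipate a genuine obstacle here. The substantive combinatorial work — checking that the folded operators $\tE_{-k}$, $\tF_{-k}$, $\eps_{-k}$ governed by the piecewise formula for $A^{(-k)}_j(\mb)$ obey the crystal axioms — has already been carried out in the preceding Lemma, while for the positive indices one merely restricts the standard $\gl_\infty$ crystal. The single point demanding (minimal) care is that a positive-index operator, being insensitive to the involution $\theta$, could a priori introduce a segment $\pbw{i,j}$ violating $i\ge -j$; the explicit descriptions of $\tf_k$ and $\te_k$ show that the only segments these operators create have left endpoint $k$ or $k+2$ with $k>0$, which excludes this possibility.
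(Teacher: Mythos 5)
Your proof is correct and follows essentially the same route as the paper: the negative indices $-k$ are handled by the preceding Lemma, and the positive indices are handled by invoking the well-known fact that $(\M,\tf_k,\te_k,\eps_k)$ is a crystal and restricting to $\Mt$. The paper's own proof is a single sentence that elides exactly the step you spell out — that for $k\in I_{>0}$ the operators $\tf_k$ and $\te_k$ actually preserve the $\theta$-restricted subset $\Mt$; you verify this correctly by reading off from Definition~\ref{defKop} that any segment created has left endpoint $k$ or $k+2$ with $k>0$, hence satisfies the $\theta$-restriction, so your write-up is a slightly fuller account of the same argument.
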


The crystal $\Mt$ has a unique highest weight vector.
\Lemma\label{lem:ht}
 If $\mb\in\Mt$ satisfies that
$\eps_{k}(\mb)=0$ for any $k\in I$, then
$\mb=\emptyset$.
Here $\emptyset$ is the empty multisegment.
In particular, for any $\mb\in\Mt$, there exist $\ell\ge0$ and
$i_1,\ldots,i_\ell\in I$ such that $\mb=\tF_{i_1}\cdots\tF_{i_\ell}\emptyset$.
\enlemma
\Proof
Assume $\mb\not=\emptyset$.
Let $k$ be the largest $k$ such that
$m_{k,j}\not=0$ for some $j$.
Then take the largest $j$ such that
$m_{k,j}\not=0$.
Then $j\ge|k|$. Moreover, we have
$m_{k+2,\ell}=0$ for any $\ell$, and
$m_{k,\ell} =0$ for any $\ell>j$. Hence we have
$$A_{j}^{(k)}(\mb)=
\begin{cases}
2m_{k,j}&\text{if $k=-j$,}\\
m_{k,j}&\text{otherwise.}
\end{cases}
$$
Hence $\eps_k(\mb)\ge A_{j}^{(k)}(\mb)>0$.
\QED
\subsection{A criterion for crystals}

We shall give a criterion for a basis to be
a crystal basis. Although we treat the
case for modules over $\mathcal{B}(\g)$
in this paper, 
similar results hold also for $\U$.

Let $\K[e,f]$ be the ring generated by $e$ and $f$
with the defining relation $ef=q^{-2}fe+1$.
We define the divided power by $f^{(n)}=f^n/[n]!$.

Let $P$ be a free $\Z$-module, and let $\alpha$ be a
non-zero element of $P$.

Let $M$ be a $\K[e,f]$-module.
Assume that $M$ has a weight decomposition
$M=\oplus_{\xi\in P}M_\xi$,
and $eM_{\la}\subset M_{\la+\al}$
and $fM_{\la}\subset M_{\la-\al}$.

Assume the following finiteness conditions:
\eq
&&\text{for any $\la\in P$, 
$\dim M_\la<\infty$ and $M_{\la+n\al}=0$ for $n\gg0$.}
\eneq
Hence for any $u\in M$,
we can write $u=\sum_{n\ge0}f^{(n)}u_n$ with $eu_n=0$.
We define endomorphisms $\te$ and $\tf$ of $M$ by
\eqn
\te u=\sum_{n\ge1}f^{(n-1)}u_n,\\
\tf u=\sum_{n\ge0}f^{(n+1)}u_n.
\eneqn
Let $B$ be a crystal with weight decomposition by $P$.
In this paper, we consider only the following type of crystals.
We have $\wt\cl B\to P$, $\tf\cl B\to B$,
$\te\cl B\to B\sqcup\{0\}$, $\eps\cl B\to\Z_{\ge0}$
satisfying the following properties, where $B_\la\seteq\wt^{-1}(\la)$:
\bnum
\item
$\tf B_\la\subset B_{\la-\al}$ and
$\te B_\la\subset B_{\la+\al}\sqcup\{0\}$
for any $\la\in P$,
\item
$\tf\te(b)=b$ if $\te b\not=0$, and $\te\circ\tf=\id_B$,
\item 
for any $\la\in P$, $B_\la$ is a finite set and
$B_{\la+n\al}=\emptyset$ for $n\gg0$,
\item
$\eps(b)=\max\set{n\ge0}{\te^n b\not=0}$ for any $b\in B$.
\enum

Set $\ord(a)=\sup\set{n\in\Z}{a\in q^n\A_0}$
for $a\in \K$.
We understand $\ord(0)=\infty$.

\smallskip
Let $\{C(b)\}_{b\in B}$ be a system of generators of 
$M$ with $C(b)\in M_{\wt(b)}$: $M=\sum_{b\in B}\K C(b)$.

Let $\xi$ be a map from $B$ to an ordered set.
Let $c\cl \Z\to\R$,
$f\cl \Z\to \R$ and
$e\cl \Z\to \R$.
Assume that a decomposition $B=B'\cup B''$ is given.

Assume that we have expressions:
\eq
eC(b)=\sum_{b'\in B}E_{b,b'}C(b'),\\
fC(b)=\sum_{b'\in B}F_{b,b'}C(b').
\eneq

Now consider the following conditions
for these data, where $l=\eps(b)$ and $l'=\eps(b')$:
{\allowdisplaybreaks
\eq
&&\text{$c(0)=0$, and $c(n)>0$ for $n\not=0$,}\label{eq:8}\\
&&c(n)\le n+c(m+n)+e(m)\quad\text{for $n\ge0$,}\label{eq:9}\\
&&c(n)\le c(m+n)+f(m)\quad\text{for $n\le0$,}\label{eq:10}\\
&&c(n)+f(n)>0\quad\text{for $n>0$,}\label{eq:13}\\
&&c(n)+e(n)>0\quad\text{for $n>0$,}\label{eq:14}\\
&&\ord(F_{b,b'})\ge -\ell+f(\ell+1-\ell'),\label{eq:4}\\
&&\ord(E_{b,b'})\ge 1-\ell+e(\ell-1-\ell'),\label{eq:5}\\
&&F_{b,\tf b}\in q^{-\ell}(1+q\A_0),\label{eq:6}\\
&&E_{b,\te b}\in q^{1-\ell}(1+q\A_0)\quad\text{if $\ell>0$,}
\label{eq:7}\\
&&\ord(F_{b,b'})>-\ell+f(\ell+1-\ell')\quad
\text{if  $b'\not=\tf b$, $\xi(\tf b)\not>\xi(b')$,}\label{eq:11.5}\\
&&\text{$\ord(F_{b,b'})>-\ell+f(\ell+1-\ell')$
if $\tf b\in B'$, $b'\not=\tf b$ and $\ell\le \ell'-1$,}\label{eq:15}\\
&&\text{$\ord(E_{b,b'})>1-\ell+e(\ell-1-\ell')$
if $b\in B''$, $b'\not=\te b$ and $\ell\le \ell'+1$.}\label{eq:16}
\eneq
}

\begin{thm}\label{th:crcr}
Assume the conditions \eqref{eq:8}--\eqref{eq:16}.
Set $L=\sum_{b\in B}\A_0C(b)$.
Then we have
$\te L\subset L$ and $\tf L\subset L$.
Moreover we have
$$\te C(b)\equiv C(\te b)\mod qL\quad\text{and}\quad
\tf C(b)\equiv C(\tf b) \mod qL\quad\text{for any $b\in B$.}$$
Here we understand $C(0)=0$.
\end{thm}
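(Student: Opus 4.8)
The module $M$ is a $\K[e,f]$-module with $ef=q^{-2}fe+1$, so every $u\in M$ has a unique ``string'' expansion $u=\sum_{n\ge0}f^{(n)}u_n$ with $eu_n=0$, and the operators $\te,\tf$ are $\K$-linear; moreover $ef^{(n)}w=q^{1-n}f^{(n-1)}w$ and $ff^{(n)}w=[n+1]f^{(n+1)}w$ for $w\in M$ with $ew=0$. I would first record that, grading $M$ by the level $n$ of this decomposition, the algebra operators $e,f$ differ from the crystal operators $\te,\tf$ only by the explicit scalars $q^{1-n}$ and $[n+1]$, which are \emph{controlled units}, $[n+1]\in q^{-n}(1+q\A_0)$. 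Since $\te,\tf$ are $\K$-linear and the $C(b)$ generate $L$ over $\A_0$, it then suffices to prove, for each $b$, that $\tf C(b)\in L$ with $\tf C(b)\equiv C(\tf b)\bmod qL$ together with the analogue for $\te$ (where $C(0)=0$); the lattice statements $\te L\subset L$ and $\tf L\subset L$ follow by $\A_0$-linearity.

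\textbf{The induction.} I would argue by induction along the weights of $M$ — which are bounded above by the finiteness hypothesis — refined within a fixed weight by induction on $\xi$ and on $\eps$, and using the partition $B=B'\sqcup B''$. The inductive statement attached to $b$ (with $\ell=\eps(b)$ and $b_0=\te^{\ell}b$) is a family of $\ord$-estimates for the level components of $C(b)$, measured against the level-$0$ components of the highest-weight generators $C(b_0)$; the functions $c,e$ and $f$ are exactly the bookkeeping of these estimates, and I expect conditions \eqref{eq:8}--\eqref{eq:14} to be precisely the arithmetic compatibilities (telescopings of the type $q^{1-a}+q[a-1]=[a]$, saturation of $\A_0$-lattices under the boson relation, and the behaviour of $c$ under a shift of level by $m$) needed for one step to feed the next. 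For the step itself I would expand $fC(b)=\sum_{b'}F_{b,b'}C(b')$ — all summands lying in the single weight space $M_{\wt(b)-\alpha}$ — and regroup by level. By \eqref{eq:6}--\eqref{eq:7} the term $F_{b,\tf b}C(\tf b)$ carries the controlled unit $q^{-\ell}$ and, after dividing out $[n+1]$ level by level, reproduces $C(\tf b)$ modulo $qL$; by \eqref{eq:4}--\eqref{eq:5} every other $F_{b,b'}C(b')$ satisfies $\ord\ge -\ell+f(\ell+1-\ell')$, which is only \emph{weakly} dominated. Genuine strictness — that no other term survives modulo $qL$ after the regrouping — should come from \eqref{eq:11.5} for the error terms with $\xi(\tf b)\not>\xi(b')$, from \eqref{eq:15} for those with $\tf b\in B'$ and $\ell\le\ell'-1$, and from the dual condition \eqref{eq:16} (with $b\in B''$) on the $e$-side; these are tailored exactly to the cases where the raw estimate fails to be strict.

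\textbf{The main obstacle.} The hard part will be this regrouping-and-cancellation bookkeeping. Once $f$ and $e$ are replaced by $\tf$ and $\te$ up to the units $[n+1]$ and $q^{1-n}$, several error generators $C(b')$ can feed overlapping levels, and one must check that in \emph{every} configuration of $(\xi(b'),\xi(\tf b))$ and $(\eps(b'),\eps(b))$ the combination of \eqref{eq:4}--\eqref{eq:7}, \eqref{eq:11.5}, \eqref{eq:15}, \eqref{eq:16} and the inequalities \eqref{eq:8}--\eqref{eq:14} keeps the total order of the error strictly above that of the main term, i.e.\ that there are no resonances. The point of the $B=B'\sqcup B''$ splitting is to decouple the $\tf$- and $\te$-directions so that these resonances can be excluded one at a time. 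Establishing that the listed conditions really are sufficient, and organizing the resulting case analysis cleanly, is where the real work lies; the remaining ingredients are the $q$-boson $\mathfrak{sl}_2$-computations recalled at the outset and the routine passage from the congruences for the $C(b)$ to the invariance of $L$.
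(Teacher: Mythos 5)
Your outline matches the paper's proof in its structural elements — the string decomposition $C(b)=\sum_{n\ge0}f^{(n)}C_n(b)$, $\ord$-estimates measured against the auxiliary lattice $L_0=\sum_{b,n}\A_0 f^{(n)}C_0(b)$, outer induction on weight in the $\alpha$-direction, inner induction on $\xi$, and the $B'\sqcup B''$ split to get $f$-side strictness from \eqref{eq:15} and $e$-side strictness from \eqref{eq:16}. But it stops where the work begins, and it misses the two devices that actually close the argument. First, you never isolate the two statements that have to be proved: the weak bound $\ord\bigl(C_n(b)\bigr)\ge c\bigl(n-\eps(b)\bigr)$, and, separately, the strict bound $\ord\bigl(C_\ell(b)-C_{\ell-1}(\te b)\bigr)>0$ for $\ell=\eps(b)>0$; it is the second lemma that gives $C(b)\equiv f^{(\ell)}C_0(\te^\ell b)\bmod qL_0$ and hence the congruences $\te C(b)\equiv C(\te b)$, $\tf C(b)\equiv C(\tf b)$. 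Nor do you write down the two recursions that drive them: the $e$-recursion \eqref{eq:E}, which passes from level $n$ of $C(b)$ to level $n-1$ at the higher weight $\wt(b)+\alpha$, and the $f$-recursion \eqref{eq:F}, obtained by applying $f$ to $C(\te b)$ (not $C(b)$), which expresses $C_n(b)$ through $C_{n-1}(\te b)$ at the higher weight together with $C_n(b')$, $b'\not=b$, at the \emph{same} weight.

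Second and more seriously, for $n<\eps(b)$ the induction on $\xi$ does not by itself close, because \eqref{eq:F} couples $C_n(b)$ to $C_n(b')$ with $\xi(b')$ possibly not smaller than $\xi(b)$. The paper resolves this with a minimality contradiction: set $r=\min_{b,n}\bigl(\ord(C_n(b))-c(n-\eps(b))\bigr)$ over the fixed weight, assume $r<0$, and then, using \eqref{eq:11.5} to force a strict gain for the summands with $\xi(b')\not<\xi(b)$, show that every $\ord(C_n(b))$ exceeds $c(n-\eps(b))+r$ — contradicting the minimality of $r$. This bootstrap is what converts the bare inequalities \eqref{eq:9}--\eqref{eq:10} on $c,e,f$ into the unconditional estimate, and it is the one genuinely non-obvious step; the ``organizing the case analysis cleanly'' that you defer is precisely the discovery of this device, not a routine afterthought.
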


We shall divide the proof into several steps.

Write
$$C(b)=\sum_{n\ge0}f^{(n)}C_n(b)\quad\text{with $eC_n(b)=0$.}$$
Set
$$L_0=\sum_{b\in B,\;n\ge0}\A_0 f^{(n)}C_0(b).$$

Set for $u\in M$,
$\ord(u)=\sup\set{n\in\Z}{u\in q^nL_0}$.
If $u=0$ we set $\ord(u)=\infty$,
and if $u\not\in\cup_{n\in\Z}q^nL_0$,
then $\ord(u)=-\infty$.

We shall use the following two recursion formulas
\eqref{eq:E} and \eqref{eq:F}.

We have
\eqn
eC(b)&=&\sum_{n\ge1}q^{1-n}f^{(n-1)}C_n(b)\\
&=&\sum_{n\ge0}E_{b,b'}f^{(n)}C_n(b').
\eneqn
Hence we have
\eq
C_n(b)=\sum_{b'\in B_{\la+\al}
}q^{n-1}E_{b,b'}C_{n-1}(b')\quad\text{for $n>0$ and $b\in B_\la$.}
\label{eq:E}
\eneq

If $\ell\seteq\eps(b)>0$, then we have
\eqn
fC(\te b)&=&\sum_{b'\in B,\; n\ge0}F_{\te b,b'}f^{(n)}C_n(b')\\
&=&\sum_{n\ge0}[n+1]f^{(n+1)}C_{n}(\te b).
\eneqn
Hence, we have by \eqref{eq:6}
\eqn
\ba{rcl}
\delta_{n\not=0}[n]C_{n-1}(\te b)&=&\sum_{b'}F_{\te b,b'}C_n(b')\\
&\in& q^{1-\ell}(1+q\A_0)C_n(b)+\sum_{b'\not=b}F_{\te b,b'}C_n(b').
\ea
\eneqn
Therefore we obtain
\eq
&&C_n(b)\in \delta_{n\not=0}(1+q\A_0)q^{\ell-n}C_{n-1}(\te b)
+\sum_{b'\not=b}q^{\ell-1}\A_0F_{\te b,b'}C_n(b')\quad\text{if $\ell>0$.}
\label{eq:F}
\eneq

\Lemma
$\ord(C_n(b))\ge c(n-\ell)$ for any $n\in\Z_{\ge0}$ and $b\in B$,
where $\ell\seteq\eps(b)$.
\enlemma
\Proof
For $\la\in P$, 
we shall show the assertion for $b\in B_\la$
by the induction on $\sup\set{n\in\Z}{M_{\la+n\al}\not=0}$.
Hence we may assume
\eq
&&\text{
$\ord(C_n(b))\ge c(n-\ell)$ for any $n\in\Z_{\ge0}$ and $b\in B_{\la+\al}$.}
\label{eq:indh}
\eneq

\noindent
(i) Let us first show $C_n(b)\in\K L_0$.

Since it is trivial for $n=0$, assume that $n>0$.
Since $C_{n-1}(b')\in \K L_0$ for $b'\in B_{\la+\al}$
by the induction assumption \eqref{eq:indh}, we have
$C_{n}(b)\in \K L_0$ by \eqref{eq:E}.

\noindent
(ii) Let us show that $\ord(C_n(b))\ge c(n-\ell)$ for $n\ge\ell$.

If $n=0$, then $\ell=0$ and the assertion is trivial by \eqref{eq:8}.
Hence we may assume that $n>0$.

We shall use \eqref{eq:E}. For $b'\in B_{\la+\al}$, we have
$$\ord(C_{n-1}(b'))\ge c(n-1-\ell')\quad\text{where $\ell'=\eps(b')$}$$
by the induction hypothesis \eqref{eq:indh}.
On the other hand,
$\ord(E_{b,b'})\ge 1-\ell+e(\ell-1-\ell')$ by \eqref{eq:5}.
Hence, 
\eqn
\ord(q^{n-1}E_{b,b'}C_{n-1}(b'))
&\ge&(n-1)+\bl(1-\ell+e(\ell-1-\ell')\br)+c(n-1-\ell')\\
&=&(n-\ell)+e(\ell-1-\ell')+c((n-\ell)+(\ell-1-\ell'))\\
&\ge&c(n-\ell)
\eneqn
by \eqref{eq:9}.

\medskip
\noindent
(iii)\ In the general case, let us set
 $r=\min\set{\ord(C_n(b))-c(n-\eps(b))}
{\text{$b\in B_\la$, $n\ge0$}}\in\R\cup\{\infty\}$.
Assuming $r<0$, 
we shall prove
$$\ord(C_n(b))>c(n-\ell)+r\quad\text{for any $b\in B_\la$,}$$
which leads a contradiction.

By the induction on $\xi(b)$,
we may assume that
\eq&&\text{if $\xi(b')<\xi(b)$, then
$\ord(C_n(b'))>c(n-\ell')+r$ where $\ell'\seteq\eps(b')$.}
\label{ind.hyp}
\eneq

By (ii), we may assume that $n<\ell$. Hence $\te b\in B$.
By the induction hypothesis \eqref{eq:indh}, we have
$\ord(q^{\ell-n}C_{n-1}(\te b))\ge \ell-n+c((n-1)-(\ell-1))
\ge c(n-\ell)>c(n-\ell)+r$.
By \eqref{eq:F}, it is enough to show
$$\ord(q^{\ell-1}F_{\te b,b'}C_n(b'))>c(n-\ell)+r\quad\text{for $b'\not=b$.}$$
We shall divide its proof into two cases.

\be[(a)]
\item $\xi(b')<\xi(b)$.

In this case, \eqref{ind.hyp} implies
$\ord(C_n(b'))>c(n-\ell')+r$.
Hence 
\eqn
\ord(q^{\ell-1}F_{\te b,b'}C_n(b'))
&>&(\ell-1)+(1-\ell+f(\ell-\ell'))+c(n-\ell')+r\\
&=&f(\ell-\ell')+c((n-\ell)+(\ell-\ell'))+r\ge c(n-\ell)+r
\eneqn
by \eqref{eq:4} and \eqref{eq:10}.

\item Case $\xi(b')\not<\xi(b)$.

In this case, $\ord(F_{\te b,b'})>1-\ell+f(\ell-\ell')$ by \eqref{eq:11.5},
and
$\ord(C_n(b'))\ge c(n-\ell')+r$.
Hence, 
\eqn
\ord(q^{\ell-1}F_{\te b,b'}C_n(b'))
&>&(\ell-1)+(1-\ell+f(\ell-\ell'))+c(n-\ell')+r\\
&=&f(\ell-\ell')+c((n-\ell)+(\ell-\ell'))+r\ge c(n-\ell)+r.
\eneqn
\ee
\QED

\Lemma
$\ord(C_\ell(b)-C_{\ell-1}(\te b))>0$ for $\ell\seteq\eps(b)>0$.
\enlemma
\Proof

We divide the proof into two cases: $b\in B'$ and $b\in B''$.

\bnum
\item $b\in B'$.

By \eqref{eq:F},
it is enough to show
$$\ord(q^{\ell-1}F_{\te b,b'}C_\ell(b'))>0\quad\text{for $b'\not=b$.}$$

\be[(a)]
\item Case $\ell>\ell'\seteq\eps(b')$.

We have
\eqn
\ord(q^{\ell-1}F_{\te b,b'}C_\ell(b'))
&\ge&(\ell-1)+(1-\ell+f(\ell-\ell'))+c(\ell-\ell')>0
\eneqn
by \eqref{eq:13}.

\item Case $\ell\le\ell'$.

We have $\ord(F_{\te b,b'})>1-\ell+f(\ell-\ell')$ by \eqref{eq:15}.
Hence
\eqn
\ord(q^{\ell-1}F_{\te b,b'}C_\ell(b'))
&>&(\ell-1)+(1-\ell+f(\ell-\ell'))+c(\ell-\ell')\ge0
\eneqn
by \eqref{eq:10} with $n=0$.

%
\ee

\item Case $b\in B''$.

We use \eqref{eq:E}.
By \eqref{eq:7}, it is enough to show that
$$\text{$\ord(q^{\ell-1}E_{b,b'}C_{\ell-1}(b'))>0$ for $b'\not=\te b$.}$$

\be[(a)]
\item Case $\ell-1>\ell'$.

$
\ord(q^{\ell-1}E_{b,b'}C_{\ell-1}(b'))
\ge e(\ell-1-\ell')+c(\ell-1-\ell')>0$ by \eqref{eq:5} and \eqref{eq:14}.

\item Case $\ell-1\le\ell'$.

 $\ord(E_{b,b'})>1-\ell+e(\ell-1-\ell')$ by \eqref{eq:16}, and
$
\ord(q^{\ell-1}E_{b,b'}C_{\ell-1}(b'))
>e(\ell-1-\ell')+c(\ell-1-\ell')\ge0$
by \eqref{eq:9} with $n=0$.

%

\ee
\enum
\QED
Hence we have
\eqn
&&C_{n}(b)\equiv 0 \mod qL_0\quad \text{for $n\not=\ell\seteq\eps(b)$,}\\
&&C_{\ell}(b)\equiv C_0(\te^\ell b) \mod qL_0,\\
&&C(b)\equiv f^{(\ell)}C_\ell(b) \mod qL_0,\\
&&\tf C(b)\equiv C(\tf b) \mod qL_0,\\
&&\te C(b)\equiv C(\te b) \mod qL_0,\\
&&L_0\seteq\sum_{b\in B,\;n\ge0}\A_0f^{(n)}C_0(b)=\sum_{b\in B}\A_0C(b).
\eneqn
Indeed, the last equality follows from the fact that
$\{C(b)\}_{b\in B}$ generates $L_0/qL_0$.

Thus we have completed the proof of Theorem~\ref{th:crcr}.

The following is the special case where
$B'=B''=B$ and $\xi(b)=\eps(b)$.
\begin{cor}\label{cor:crcr}
Assume \eqref{eq:8}--\eqref{eq:7} and
\eq
&&\ord(F_{b,b'})>-\ell+f(1+\ell-\ell')\quad
\text{if $\ell<\ell'$ and $b'\not=\tf b$, }\label{eq:43}\\
&&\ord(E_{b,b'})>1-\ell+e(\ell-1-\ell')
\quad\text{if  $\ell\le\ell'+1$ and $b'\not=\te b$.}\label{eq:44}
\eneq
Then the assertions of\/ {\rm Theorem~\ref{th:crcr}} hold.
\end{cor}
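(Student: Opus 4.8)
The plan is to obtain Corollary~\ref{cor:crcr} as an immediate instance of Theorem~\ref{th:crcr}, by taking the auxiliary data to be the trivial ones: $B'=B''=B$ and $\xi=\eps$. Under this choice the hypotheses \eqref{eq:8}--\eqref{eq:7} of the theorem are assumed verbatim, so the only work is to check that the remaining three hypotheses \eqref{eq:11.5}, \eqref{eq:15}, \eqref{eq:16} of Theorem~\ref{th:crcr} reduce, respectively, to \eqref{eq:43}, \eqref{eq:43} again, and \eqref{eq:44}.

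The one elementary fact needed is that $\eps(\tf b)=\eps(b)+1$. This follows from the crystal axioms: since $\te\circ\tf=\id$, we get $\te^n(\tf b)=\te^{n-1}b$ for $n\ge1$, hence $\te^n(\tf b)\ne0$ exactly when $n\le\eps(b)+1$, so $\eps(\tf b)=\eps(b)+1=\ell+1$. Consequently, writing $\xi=\eps$, the side condition $\xi(\tf b)\not>\xi(b')$ appearing in \eqref{eq:11.5} reads $\ell+1\le\ell'$, i.e.\ $\ell<\ell'$, and the conclusion $\ord(F_{b,b'})>-\ell+f(\ell+1-\ell')$ is literally \eqref{eq:43} (note $\ell+1-\ell'=1+\ell-\ell'$). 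For \eqref{eq:15}: because $B'=B$, the hypothesis $\tf b\in B'$ is vacuous, and what remains, namely $b'\ne\tf b$ and $\ell\le\ell'-1$, is once more $b'\ne\tf b$, $\ell<\ell'$, so \eqref{eq:15} is again exactly \eqref{eq:43}. For \eqref{eq:16}: because $B''=B$, the hypothesis $b\in B''$ is vacuous, so \eqref{eq:16} becomes $\ord(E_{b,b'})>1-\ell+e(\ell-1-\ell')$ whenever $b'\ne\te b$ and $\ell\le\ell'+1$, which is precisely \eqref{eq:44}.

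With every hypothesis of Theorem~\ref{th:crcr} verified, that theorem applies and gives $\te L\subset L$, $\tf L\subset L$, and $\te C(b)\equiv C(\te b)$, $\tf C(b)\equiv C(\tf b)\bmod qL$, which are the assertions claimed in the corollary. There is essentially no obstacle here: the entire content lies in Theorem~\ref{th:crcr}, and the corollary merely isolates the special case — the one actually used in the applications — in which the stratification $B=B'\cup B''$ and the ordering map $\xi$ are taken in the simplest possible way, so that the four conditions \eqref{eq:11.5}, \eqref{eq:15}, \eqref{eq:16} (and the implicit comparison via $\xi$) collapse to the two clean inequalities \eqref{eq:43} and \eqref{eq:44}.
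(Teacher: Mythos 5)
Your proof is correct and matches the paper's own treatment exactly: the paper states immediately before the corollary that it is the special case $B'=B''=B$, $\xi=\eps$ of Theorem~\ref{th:crcr}, and you have simply supplied the (easy but worth writing out) verification that under this specialization conditions \eqref{eq:11.5}, \eqref{eq:15}, \eqref{eq:16} reduce to \eqref{eq:43} and \eqref{eq:44}, using $\eps(\tf b)=\eps(b)+1$.
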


\subsection{Estimates of the order of coefficients}
By applying Theorem~\ref{th:crcr}, we shall show that
$\{\Pt(\mb)\vac\}_{\mb\in\Mt}$ is a crystal basis of $\Vt$ and
its crystal structure coincides with the one given in 
\S\;\ref{subsec:KashMt}.

We define
$c, f, e\cl\Z\to\Q$ by $c(n)=\vert n/2\vert$ and
$f(n)=e(n)=n/2$. 
Then the conditions \eqref{eq:8}--\eqref{eq:14} are obvious.
Set $\xi(\mb)=(-1)^{m_{-k+2,k}}m_{-k,k}$ and
\eqn
B''&=&\set{\mb\in\Mt}{-k+2\le n_e(\mb)<k}
\cup\set{\mb\in\Mt}{\text{$m_{-k+2,k}(\mb)$ is odd}},\\
B'&=&\Mt\setminus B''.
\eneqn
Here $n_e(\mb)$ is $n_e$ given in Definition~\ref{def:crMt} (ii).
If $\eps_{-k}(\mb)=0$, then we understand $n_e(\mb)=\infty$.

We define $F_{\mb,\mb'}^{-k}$
and $E_{\mb,\mb'}^{-k}$ by the coefficients of the following expansion:
\eqn
F_{-k}P_\theta(\mb)\tvac&=&\sum_{\mb'}F_{\mb,\mb'}^{-k}P_\theta(\mb')\tvac,\\
E_{-k}P_\theta(\mb)\tvac&=&\sum_{\mb'}E_{\mb,\mb'}^{-k}P_\theta(\mb')\tvac,
\eneqn
as given in Theorems~\ref{th:F-k} and \ref{th:E-k}.
Put $\ell=\varepsilon_{-k}(\mb)$ and $\ell'=\varepsilon_{-k}(\mb')$. \\
\begin{prop}
The conditions \eqref{eq:4}, \eqref{eq:6},
\eqref{eq:11.5} and \eqref{eq:15} hold,
namely, we have
\be[{\rm(a)}]
\item if  $\mb'=\tF_{-k}(\mb)$, then
$F_{\mb,\mb'}^{-k}\in q^{-\ell}(1+q\A_0)$,
\item
if $\mb'\not=\tF_{-k}(\mb)$, then
$\ord(F_{\mb,\mb'}^{-k})\ge -\ell+f(\ell+1-\ell')=-(\ell+\ell'-1)/2$,
\item
if $\mb'\not=\tF_{-k}(\mb)$
and $\ord(F_{\mb,\mb'}^{-k})=-(\ell+\ell'-1)/2$,
then the following two conditions hold:
\be[{\rm(1)}]
\item
$\xi(\tF_{-k}(\mb))>\xi(\mb')$,
\item $\ell\ge\ell'$ or $\tF_{-k}(\mb)\in B''$.
\ee
\ee
\end{prop}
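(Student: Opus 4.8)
The plan is to read off the coefficients $F^{-k}_{\mb,\mb'}$ from the explicit five-term formula in Theorem~\ref{th:F-k} and, for each of the five families of target multisegments $\mb'$, to compare the $q$-order of the coefficient with the quantity $-\ell+f(\ell+1-\ell')=-(\ell+\ell'-1)/2$, where $\ell=\eps_{-k}(\mb)$ and $\ell'=\eps_{-k}(\mb')$ are computed from the combinatorial formulas for $A^{(-k)}_j$ in Definition~\ref{def:crMt}. The key arithmetical input is that each coefficient in Theorem~\ref{th:F-k} is, up to a sign, a power of $q$ times a quantum integer $[m+1]$ or $[2m+2]$, so that $\ord$ of such a coefficient equals the exponent of the explicit $q$-power appearing there (since $[n]\in 1+q\A_0$ has order $0$). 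Thus the entire proposition reduces to a bookkeeping comparison of two linear expressions in the multiplicities $m_{i,j}(\mb)$.

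First I would fix notation: write $A_j=A^{(-k)}_j(\mb)$ and $A'_j=A^{(-k)}_j(\mb')$ for the relevant target $\mb'$, and recall from the crystal lemma (the ``$\tF_{-k}\tE_{-k}=\mathrm{id}$'' lemma) the precise relation between $A_j$ and $A'_j$ in each of the cases $n_f>k$, $n_f=k$ ($m_{-k+2,k}$ odd or even), $-k+2\le n_f<k-2$, $n_f=k-2$ — these give $\ell'=\eps_{-k}(\tF_{-k}\mb)=\ell+1$. For part (a) I would then identify which term of the Theorem~\ref{th:F-k} sum produces $\mb'=\tF_{-k}(\mb)$: by the definition of $n_f$ (the smallest index realizing the maximum of the $A_j$'s in the prescribed ordering) the relevant $q$-exponent telescopes against $\ell=\max_j A_j$, and one checks directly that the exponent equals $-\ell$ and the quantum-integer factor is $[m+1]$ or $[2m+2]$ with the multiplicity forced to make the leading coefficient $1$; hence $F^{-k}_{\mb,\tF_{-k}\mb}\in q^{-\ell}(1+q\A_0)$. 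This is essentially the same computation that appears in \cite{K} and in Theorem~\ref{thmgl}(3), transported to the $\theta$-restricted setting.

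For (b) and (c) I would go term by term through the other four families of $\mb'$. In each case the $q$-exponent $N$ of $F^{-k}_{\mb,\mb'}$ is given explicitly in Theorem~\ref{th:F-k}; I must show $N\ge -(\ell+\ell'-1)/2$, i.e.\ $2N+\ell+\ell'\ge 1$, and analyze when equality holds. Here $\ell=\max_j A_j\ge A_j$ for every $j$, and $\ell'$ is likewise a max of the $A'_j$; the strategy is to bound $\ell$ from above by evaluating $A_j$ at a well-chosen index (the one ``seen'' by the term in question) and to bound $\ell'$ from below by evaluating $A'_j$ at the index $n_f(\mb')$, then feed these into $2N+\ell+\ell'$. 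When equality $2N+\ell+\ell'=1$ forces, it forces the bounding indices to actually achieve the maxima, and from that one extracts (1) the strict inequality $\xi(\tF_{-k}\mb)>\xi(\mb')$ — which reduces to comparing $(-1)^{m_{-k+2,k}}m_{-k,k}$ for $\tF_{-k}\mb$ versus $\mb'$, i.e.\ to which segment the arrow moved — and (2) the dichotomy $\ell\ge\ell'$ or $\tF_{-k}\mb\in B''$, which is exactly the assertion that if the equality case occurs with $\ell<\ell'$ then the $\tF_{-k}$-arrow must have changed the parity of $m_{-k+2,k}$ or landed $n_e$ in the range $[-k+2,k)$, putting $\tF_{-k}\mb$ into $B''$ by definition of $B''$ and $\xi$.

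The main obstacle I expect is the case analysis for the fifth (``$i$-sum'') family and for the $n_f=k$ sub-cases, where the exponent in Theorem~\ref{th:F-k} involves the cumulative sums $\sum_{-k+2<j<i}(m_{j,k-2}-m_{j,k})$ and the $\delta_{k\neq 1}$ corrections: there the comparison $2N+\ell+\ell'\ge 1$ is not a one-line estimate but requires choosing the right test index $j$ in $A'_j$ and carefully tracking how the signature-cancellation algorithm in the Remark after Definition~\ref{def:crMt} relates $n_f$, $n_e$ and the multiplicities. The parity bookkeeping around $m_{-k+2,k}$ (the segment $\pbw{-k+2,k}$ contributes $-+$ only when its multiplicity is odd) is where sign errors are most likely, so I would organize that sub-case by first treating $k=1$ separately (where $\mb\in\Mt$ has only segments $\pbw{-1,1}$ and $\pbw{1}$, essentially the $A_2$-type picture of the Example) and then the general $k>1$ case. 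Everything else — the reduction to $q$-power exponents, the telescoping in (a), and conditions \eqref{eq:8}--\eqref{eq:14} for our choice $c(n)=|n/2|$, $f(n)=e(n)=n/2$ — is routine.
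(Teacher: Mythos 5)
Your overall plan is the same as the paper's: go term-by-term through the four families in Theorem~\ref{th:F-k}, identify $\ord(F^{-k}_{\mb,\mb'})$ exactly with $-A^{(-k)}_j(\mb)$ (possibly plus a $\pm1$ parity correction) for the relevant index $j$, then use $A_j\le\ell$ and the known relation $A'_j=A_j\pm1$ (hence $\ell'\ge A_j\pm1$) to derive $2\ord+\ell+\ell'\ge1$, and analyze the equality case to extract the $\xi$-comparison and the $B''$-membership from the definition of $n_f$ and $n_e$. That is correct and is exactly what the paper does.

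However, there is a concrete error in the step on which every order computation rests. You assert that ``$[n]\in 1+q\A_0$ has order $0$, so $\ord$ of the coefficient equals the exponent of the explicit $q$-power.'' This is false: for $n>0$ one has $[n]=q^{1-n}+q^{3-n}+\cdots+q^{n-1}\in q^{1-n}(1+q\A_0)$, so $\ord([n])=1-n$. The contribution of the quantum-integer factor is not negligible; for instance, in the $\pbw{-k,k}$-term of Theorem~\ref{th:F-k}, the factor $[2m_{-k,k}+2]$ contributes $-2m_{-k,k}-1$ to the order, and that is precisely what makes the total come out to $-A_k-\delta(\text{$m_{-k+2,k}$ even})$ rather than $-A_k+2m_{-k,k}$. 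With your claim as written, every order would be off by $m$ or $2m+1$ and the clean identification $\ord(F^{-k}_{\mb,\mb'})=-A_j$ (up to the parity shift) would disappear; the inequality $\ord\ge -(\ell+\ell'-1)/2$ would not be an identity-tight estimate at $n_f$ and the equality analysis in part (c) would collapse. A second, smaller slip: you speak of ``bounding $\ell$ from above by evaluating $A_j$ at a well-chosen index,'' but $\ell=\max_j A_j$ so a single $A_j$ gives a \emph{lower} bound on $\ell$; what is needed are the two lower bounds $\ell\ge A_j$ and $\ell'\ge A'_j=A_j\pm1$, combined with the exact formula $\ord=-A_j\mp\delta$. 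Once these two points are fixed, the rest of your plan—parity bookkeeping for $m_{-k+2,k}$, the $\delta_{k\neq1}$ corrections, treating $k=1$ separately, and the identification of $\tF_{-k}\mb\in B''$ when equality holds with $\ell<\ell'$—coincides with the paper's argument.
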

\begin{proof}
We shall write $A_j$ for $A_j^{-k}(\mb)$.
Let $n_f$ be as in Definition~\ref{def:crMt} (i).

Note that $F^{-k}_{\mb,\tF_{-k}(\mb)}\not=0$.

If $F^{-k}_{\mb,\mb'}\not=0$,
we have the following four cases. 
We shall use $[n]\in q^{1-n}(1+q\A_0)$ for $n>0$.

\noindent
\textbf{Case 1.} $\mb'=\mb-\pbw{-k+2,n}+\pbw{-k,n}$ for $n>k$. 

In this case, we have
\[
F_{\mb,\mb'}^{-k}=[m_{-k,n}+1]q^{\sum_{j>n}
(m_{-k+2,j}-m_{-k,j})}\in q^{-A_n}(1+q\A_0)
\]
and
\eqn
&&\ell=\max\{A_{j}(j\ge -k+2)\},\\
&&\ell'=\max\{A_j\ (j>n),A_n+1,A_{j}+2\ (j<n)\}.
\eneqn
If $\mb'=\tF_{-k}(\mb)$, then $\ell=A_n$ and we obtain (a).
Assume $\mb'\not=\tF_{-k}(\mb)$.
Since $A_n\le\ell,\ell'-1$, we have
$\ord(F^{-k}_{\mb,\mb'})=-A_n\ge -(\ell+\ell'-1)/2$. 
Hence we obtain (b).
If $\ord(F^{-k}_{\mb,\mb'})= -(\ell+\ell'-1)/2$, then
we have $A_n=\ell=\ell'-1$. 
Since $A_j+2\le \ell'=A_n+1$ for $j<n$, we have $n_f=n$ and $\mb'=\tF_{-k}(\mb)$,
which is a contradiction.

\smallskip
\noindent
\textbf{Case 2.} $\mb'=\mb-\pbw{-k+2,k}+\pbw{-k,k}$. 

In this case we have
\[
F_{\mb,\mb'}^{-k}=[2m_{-k,k}+2]q^{\sum_{j>k}(m_{-k+2,j}-m_{-k,j})}
\in q^{-A_k-\delta(\text{$\m_{-k+2,k}$ is even})}(1+q\A_0).
\]
\bnum
\item
Assume that $m_{-k+2,k}$ is odd.
We have $F_{\mb,\mb'}^{-k}\in q^{-A_k}(1+q\A_0)$ and
\[
\ell'=\max\{A_j\ (j>k),A_k+1, A_j+2\ (j<k)\}.
\]
If $\mb'=\tF_{-k}(\mb)$, then $\ell=A_{k}$
and (a) holds.
Assume that $\mb'\not=\tF_{-k}(\mb)$.
We have $A_k\le \ell,\ell'-1$ and hence
$\ord(F_{\mb,\mb'}^{-k})=-A_k\ge-(\ell+\ell'-1)/2$.
If $\ord(F_{\mb,\mb'}^{-k})=-(\ell+\ell'-1)/2$,
then $A_k=\ell=\ell'-1$, and we have $\mb'=\tF_{-k}(\mb)$,
which is a contradiction.
\item
Assume that $m_{-k+2,k}$ is even.
Then $\mb'\not=\tF_{-k}(\mb)$,
$F_{\mb,\mb'}^{-k}\in q^{-A_k-1}(1+q\A_0)$ and
\[
\ell'=\max\{A_j\ (j>k),A_k+3,A_j+2\ (j<k)\}.
\]
We have $A_k\le \ell,\ell'-3$ and hence
$\ord(F_{\mb,\mb'}^{-k})=-A_k-1\ge-(\ell+\ell'-1)/2$.
Hence (b) holds.
Let us show (c). Assume $\mb'\not=\tF_{-k}(\mb)$, and
$\ord(F_{\mb,\mb'}^{-k})=-(\ell+\ell'-1)/2$.
Then we have $A_k=\ell=\ell'-3$.
Hence $n_f\le k$ and we have
either $\tF_{-k}(\mb)=\mb-\delta_{i\not=k}\pbw{i,k-2}+\pbw{i,k}$
with $-k+2< i\le k$ or
$\tF_{-k}(\mb)=\mb-\delta_{k\not=1}\pbw{-k+2,k-2}+\pbw{-k+2,k}$.
Hence we have $\xi(\tF_{-k}(\mb))=\pm m_{-k,k}>-m_{-k,k}-1=\xi(\mb')$.
Hence we obtain (c) (1).
\be[(1)]
\item
Assume $\tF_{-k}(\mb)=\mb-\delta_{i\not=k}\pbw{i,k-2}+\pbw{i,k}$
with $-k+2< i\le k$. Then $k\neq1$ and
$\tE_{-k}(\tF_{-k}(\mb))=\tF_{-k}(\mb)-\pbw{i,k} +\delta_{i\not=k}\pbw{i,k-2}$.
Hence $n_e(\tF_{-k}(\mb))=i-2<k$. Hence $\tF_{-k}(\mb)\in B''$.
Therefore we obtain (c) (2).
\item
Assume $\tF_{-k}(\mb)=\mb-\delta_{k\not=1}\pbw{-k+2,k-2}+\pbw{-k+2,k}$.
Then $m_{-k+2,k}(\tF_{-k}(\mb))=m_{-k+2,k}+1$ is odd.
Hence $\tF_{-k}(\mb)\in B''$.
\ee
\ee

\noindent
\textbf{Case 3.} $\mb'=\mb-
\delta_{k\not=1}\pbw{-k+2,k-2}+\pbw{-k+2,k}$. In this case, we have
\[
F_{\mb,\mb'}^{-k}=[m_{-k+2,k}+1]
q^{\sum_{j>k}(m_{-k+2,j}-m_{-k,j})+m_{-k+2,k}-2m_{-k,k}}
\in q^{-A_{k}+\delta(\text{$m_{-k+2,k}$ is odd})}(1+q\A_0).
\]
\bnum
\item
If $m_{-k+2,k}$ is odd, then $\mb'\not=\tF_{-k}(\mb)$,
$F_{\mb,\mb'}^{-k}\in q^{-A_{k}+1}(1+q\A_0)$, and
\[
\ell'=\max\{A_j\ (j>k), A_k-1,A_j+2\ (j<k)\}.
\]
We have $A_{k}\le \ell,\ell'+1$ and hence
$\ord(F_{\mb,\mb'}^{-k})=-A_k+1\ge-(\ell+\ell'-1)/2$.
If $\ord(F_{\mb,\mb'}^{-k})=-(\ell+\ell'-1)/2$,
then
$A_{k}=\ell=\ell'+1$, and $n_f=k$.
Hence we obtain (c) (2), and $\tF_{-k}(\mb)=\mb-\pbw{-k+2,k}+\pbw{-k,k}$.
Hence
$\xi(\tF_{-k}(\mb))=m_{-k,k}+1>m_{-k,k}=\xi(\mb')$. Hence we obtain (c) (1).

\item
If $m_{-k+2,k}$ is even, then $F_{\mb,\mb'}^{-k}\in q^{-A_{k}}(1+q\A_0)$
and
\[
\ell'=\max\{A_j\ (j>k),A_k+1,A_j+2\ (j<k)\}.
\]
If $\mb'=\tF_{-k}(\mb)$, then $\ell=A_k$ and (a) is satisfied.
Assume $\mb'\not=\tF_{-k}(\mb)$.
We have $A_{k}\le \ell,\ell'-1$ and hence
$\ord(F_{\mb,\mb'}^{-k})=-A_k\ge-(\ell+\ell'-1)/2$.
If $\ord(F_{\mb,\mb'}^{-k})=-(\ell+\ell'-1)/2$,
then
$A_{k}=\ell=\ell'-1$, and hence $\mb'=\tF_{-k}(\mb)$,
which is a contradiction.
\ee

\noindent
\textbf{Case 4.} $\mb'=\mb-\delta_{i\not=k}\pbw{i,k-2}+\pbw{i,k}$
for $-k+2< i\le k$.
We have
\begin{eqnarray*}
F_{\mb,\mb'}^{-k}&=
&[m_{i,k}+1]q^{\sum_{j>k}(m_{-k+2,j}-m_{-k,j})+
2m_{-k+2,k-2}-2m_{-k,k}+\sum_{-k+2<j<i}(m_{j,k-2}-m_{j,k})} \\
&\in&q^{-A_{i-2}}(1+q\A_0),
\end{eqnarray*}
and
\[
\ell'=\max\{A_j\ (j\ge k),A_j\ (j<i-2),A_{i-2}+1, A_j+2\ (i-2<j\le k-2)\}.
\]
If $\mb'=\tF_{-k}(\mb)$, then $\ell=A_{i-2}$ and
(a) holds.
Assume  $\mb'\not=\tF_{-k}(\mb)$.
Since $A_{i-2}\le\ell,\ell'-1$,
we have $\ord(F^{-k}_{\mb,\mb'})=-A_{i-2}\ge -(\ell+\ell'-1)/2$. 
Hence we obtain (b).
If $\ord(F^{-k}_{\mb,\mb'})= -(\ell+\ell'-1)/2$, then
we have $A_{i-2}=\ell=\ell'-1$. Hence $\mb'=\tF_{-k}(\mb)$,
which is a contradiction.

%
\end{proof}

\begin{prop}
Suppose $k>0$. 
The conditions \eqref{eq:5}, \eqref{eq:7},
and \eqref{eq:16} hold, namely, we have
\be[{\rm(a)}]
\item if  $\mb'=\tE_{-k}(\mb)$, then
$E_{\mb,\mb'}^{-k}\in q^{1-\ell}(1+q\A_0)$,
\item
if $\mb'\not=\tE_{-k}(\mb)$, then
$\ord(E_{\mb,\mb'}^{-k})\ge1-\ell+e(\ell-1-\ell')=-(\ell+\ell'-1)/2$,
\item
if $\mb'\not=\tE_{-k}(\mb)$, $\ell\le\ell'+1$
and $\ord(E_{\mb,\mb'}^{-k})=-(\ell+\ell'-1)/2$,
then $b\not\in B''$.
\ee
\end{prop}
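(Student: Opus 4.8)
The plan is to run the same kind of case analysis as in the preceding proposition, now on the coefficients $E^{-k}_{\mb,\mb'}$ taken from the expansion of $E_{-k}\tP(\mb)$ in Theorem~\ref{th:E-k}. As before, assertions (a), (b), (c) are exactly the conditions \eqref{eq:7}, \eqref{eq:5} and \eqref{eq:16} for the data $c,f,e,\xi,B',B''$ fixed at the start of this subsection, so it suffices to estimate $\ord(E^{-k}_{\mb,\mb'})$ for each of the five families of $\mb'$ occurring in Theorem~\ref{th:E-k}: $\mb'=\mb-\pbw{-k,\ell_0}+\pbw{-k+2,\ell_0}$ with $\ell_0>k$ (Family 1); $\mb'=\mb-\pbw{-k,k}+\pbw{-k+2,k}$ (Family 2); $\mb'=\mb-\pbw{i,k}+\pbw{i,k-2}$ with $-k+2<i\le k-2$ (Family 3); $\mb'=\mb-\pbw{-k+2,k}+\pbw{-k+2,k-2}$ for $k\ne1$ (Family 4); and $\mb'=\mb-\pbw{k}$ (Family 5). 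Throughout I will use $[n]\in q^{1-n}(1+q\A_0)$, $[2n]\in q^{1-2n}(1+q\A_0)$, $1-q^2\in1+q\A_0$, and $\ell=\eps_{-k}(\mb)=\max\set{A^{(-k)}_j(\mb)}{j\ge -k+2}$; terms with $\mb'\notin\Mt$ contribute $\tP(\mb')=0$ and may be dropped.

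For Families 1, 3, and 5 (the last one only when $k\ne1$) the move does not touch $m_{-k+2,k}$, and a direct computation with $q$-integers gives $\ord(E^{-k}_{\mb,\mb'})=1-A^{(-k)}_{j_0}(\mb)$, where $j_0$ is the index of the move ($j_0=\ell_0$, $i-2$, $k-2$ respectively). Moreover, exactly as in Definition~\ref{def:crMt}, the passage from $\mb$ to $\mb'$ is the inverse of an $\tF_{-k}$-type move at $j_0$: it fixes $A^{(-k)}_j$ for $j>j_0$ in the ordering $\cdots>k+2>k>-k+2>\cdots>k-2$, lowers $A^{(-k)}_{j_0}$ by $1$, and lowers $A^{(-k)}_j$ by $2$ for $j<j_0$ (a possibly empty set). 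Hence $A^{(-k)}_{j_0}(\mb)\le\ell$ and $A^{(-k)}_{j_0}(\mb)=A^{(-k)}_{j_0}(\mb')+1\le\ell'+1$, and averaging the two resulting bounds for $\ord(E^{-k}_{\mb,\mb'})$ yields $\ge-(\ell+\ell'-1)/2$, which is (b); when $j_0=n_e(\mb)$ we get $A^{(-k)}_{j_0}(\mb)=\ell$ and $\mb'=\tE_{-k}(\mb)$, whence $\ord=1-\ell$, which is (a). If equality holds in (b) but $\mb'\ne\tE_{-k}(\mb)$, then $A^{(-k)}_{j_0}(\mb)=\ell=\ell'+1$; since $A^{(-k)}_{j_0}$ is maximal while all $A^{(-k)}_j$ with $j>j_0$ are unchanged, necessarily $n_e(\mb)=j_0$ and $\mb'=\tE_{-k}(\mb)$, a contradiction. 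So for these families (c) holds vacuously.

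Families 2 and 4 (and Family 5 in the case $k=1$, where $\pbw{k}=\pbw{-k+2,k}$) do change $m_{-k+2,k}$, and here the parity of $m_{-k+2,k}$ must be followed both through the term $\delta(\text{$m_{-k+2,k}$ is odd})$ present in $A^{(-k)}_k$ and through the jump of this term when the move flips the parity. Carrying this out case by case one finds, say for Family 2: if $m_{-k+2,k}$ is even then $\ord(E^{-k}_{\mb,\mb'})=1-A^{(-k)}_k(\mb)$ with $A^{(-k)}_k$ lowered by $1$, so $\mb'=\tE_{-k}(\mb)$ precisely when $n_e(\mb)=k$, and the argument of the previous paragraph applies verbatim; if $m_{-k+2,k}$ is odd then $\ord(E^{-k}_{\mb,\mb'})=2-A^{(-k)}_k(\mb)$ with $A^{(-k)}_k$ lowered by $3$, so equality in (b) would force $\ell=\ell'+3>\ell'+1$ and the hypothesis of (c) simply fails---which is exactly why membership of $\mb$ in $B''$ via odd $m_{-k+2,k}$ causes no problem. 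The only instance in which (c) is not vacuous is Family 4 with $m_{-k+2,k}$ even, together with its $k=1$ analogue (Family 5 with $m_{1,1}$ even): there $\ord(E^{-k}_{\mb,\mb'})=-A^{(-k)}_k(\mb)$ while $A^{(-k)}_k(\mb')=A^{(-k)}_k(\mb)+1$, so equality in (b) forces $A^{(-k)}_k(\mb)=\ell=\ell'-1$; then $A^{(-k)}_k(\mb)=\ell$ is maximal, so $n_e(\mb)\notin\{-k+2,\dots,k-2\}$, and since $m_{-k+2,k}$ is even this gives $\mb\notin B''$, proving (c).

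The routine part is the evaluation of each $E^{-k}_{\mb,\mb'}$ as a monomial in $q$ times $q$-integers, its rewriting through a single $A^{(-k)}_{j_0}(\mb)$, and the computation of $A^{(-k)}_j(\mb')-A^{(-k)}_j(\mb)$. I expect the only genuinely delicate point to be the parity bookkeeping in Families 2 and 4: the $\delta(\text{$m_{-k+2,k}$ is odd})$ contribution to $A^{(-k)}_k$ produces non-uniform offsets for $\ord(E^{-k}_{\mb,\mb'})$ (namely $-A^{(-k)}_k(\mb)$, $1-A^{(-k)}_k(\mb)$, or $2-A^{(-k)}_k(\mb)$) and non-uniform changes of $A^{(-k)}_k$ (by $+1$, $-1$, or $-3$), and one must identify correctly, via Definition~\ref{def:crMt}(ii), the value $n_e(\mb)$ and when the move from $\mb$ to $\mb'$ coincides with $\tE_{-k}$. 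Once these are settled the five families are each a short computation, and the proof ends by collecting the bounds as in (a), (b), (c).
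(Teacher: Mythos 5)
Your proposal is correct and follows essentially the same route as the paper: you estimate each coefficient $E^{-k}_{\mb,\mb'}$ as a power of $q$ times $q$-integers, express the exponent via $A^{(-k)}_{j_0}(\mb)$, track how the $A^{(-k)}_j$ change under the move $\mb\mapsto\mb'$, and then deduce (a), (b), (c) by the two inequalities $A_{j_0}\le\ell$ and $A_{j_0}\le\ell'+\text{(offset)}$. Your only departure is a slightly different case partition (grouping the families where $m_{-k+2,k}$ is untouched, so that $A_{j_0}$ uniformly drops by $1$, versus those where the parity term $\delta(\text{$m_{-k+2,k}$ is odd})$ shifts), whereas the paper runs through Cases 1--5 in the order of Theorem~\ref{th:E-k} and subdivides by parity inside Cases 2--3; the underlying arithmetic, the identification of $n_e(\mb)$, and the verification that only the even-parity subcase of $\mb'=\mb-\pbw{-k+2,k}+\delta_{k\ne 1}\pbw{-k+2,k-2}$ makes (c) non-vacuous are the same.
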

\Proof
The proof is similar to the one of the above
proposition.

We shall write $A_j$ for $A_j^{-k}(\mb)$.
Let $n_e$ be as in Definition~\ref{def:crMt} (ii).

Note that $E^{-k}_{\mb,\tE_{-k}(\mb)}\not=0$ if $\tE_{-k}(\mb)\not=0$.
If $E^{-k}_{\mb,\mb'}\not=0$,
we have the following five cases. 

\noindent
\textbf{Case 1.} $\mb'=\mb-\pbw{-k,n}+\pbw{-k+2,n}$ for $n>k$. 

In this case, we have
\[
E_{\mb,\mb'}^{-k}=(1-q^2)
[m_{-k+2,n}+1]q^{1+\sum_{j\ge n}
(m_{-k+2,j}-m_{-k,j})}\in q^{1-A_n}(1+q\A_0)
\]
and
\eqn
&&\ell=\max\{A_{j}(j\ge -k+2)\},\\
&&\ell'=\max\{A_j\ (j>n),A_n-1,A_{j}-2\ (j<n)\}.
\eneqn
If $\mb'=\tE_{-k}(\mb)$, then $\ell=A_n$ and we obtain (a).
Assume $\mb'\not=\tE_{-k}(\mb)$. 
Since $A_n\le\ell,\ell'+1$,
we have $\ord(E^{-k}_{\mb,\mb'})=1-A_n\ge -(\ell+\ell'-1)/2$. 
Hence we obtain (b).
If $\ord(E^{-k}_{\mb,\mb'})= -(\ell+\ell'-1)/2$, then
we have $A_n=\ell=\ell'+1$. 
Since $A_j\le \ell'=A_n-1$ for $j>n$, we have $n_e=n$ and $\mb'=\tE_{-k}(\mb)$,
which is a contradiction.

\smallskip
\noindent
\textbf{Case 2.} $\mb'=\mb-\pbw{-k,k}+\pbw{-k+2,k}$. 

In this case we have
\eqn
E_{\mb,\mb'}^{-k}&=&(1-q^2)[m_{-k+2,k}+1]q^{1+\sum_{j>k}(m_{-k+2,j}-m_{-k,j})
+m_{-k+2,k}-2m_{-k,k}}\\
&\in& q^{1-A_k+\delta(\text{$\m_{-k+2,k}$ is odd})}(1+q\A_0).
\eneqn
\bnum
\item
Assume that $m_{-k+2,k}$ is odd. Then $\mb'\not=\tE_{-k}(\mb)$,
$E_{\mb,\mb'}^{-k}\in q^{2-A_k}(1+q\A_0)$ and
\[
\ell'=\max\{A_j\ (j>k),A_k-3, A_j-2\ (j<k)\}.
\]
We have $A_k\le \ell,\ell'+3$ and hence
$\ord(E_{\mb,\mb'}^{-k})=2-A_k\ge-(\ell+\ell'-1)/2$.
Hence (b) holds.
If $\ord(E_{\mb,\mb'}^{-k})=-(\ell+\ell'-1)/2$,
then $A_k=\ell=\ell'+3$. Hence $\ell>\ell'+1$ and (c) holds.
\item
Assume that $m_{-k+2,k}$ is even.
Then
$E_{\mb,\mb'}^{-k}\in q^{1-A_k}(1+q\A_0)$ and
\[
\ell'=\max\{A_j\ (j>k),A_k-1,A_j-2\ (j<k)\}.
\]
If $\mb'=\tE_{-k}(\mb)$, then
$\ell=A_k$, and we obtain (a).
Assume $\mb'\not=\tE_{-k}(\mb)$.
We have $A_k\le \ell,\ell'+1$ and hence
$\ord(E_{\mb,\mb'}^{-k})=1-A_k\ge-(\ell+\ell'-1)/2$.
If $\ord(E_{\mb,\mb'}^{-k})=-(\ell+\ell'-1)/2$,
then $A_k=\ell=\ell'+1$ and $n_e=k$.
Hence $\mb'=\tE_{-k}(\mb)$, which is a contradiction.
\ee

\noindent
\textbf{Case 3.} $\mb'=\mb-\pbw{-k+2,k}+\delta_{k\not=1}\pbw{-k+2,k-2}$. 
If  $k\not=1$, we have
\eqn
E_{\mb,\mb'}^{-k}&=&(1-q^2)[2(m_{-k+2,k-2}+1)]
q^{1+\sum_{j>k}(m_{-k+2,j}-m_{-k,j})+2m_{-k+2,k-2}-2m_{-k,k}}\\
&\in& q^{-A_{k}+\delta(\text{$m_{-k+2,k}$ is odd})}(1+q\A_0).
\eneqn
If $k=1$, we have
$$E_{\mb,\mb'}^{-k}=q^{\sum_{j>k}(m_{-k+2,j}-m_{-k,j})-2m_{-k,k}}
=q^{-A_{k}+\delta(\text{$m_{-k+2,k}$ is odd})}.$$
In the both cases, we have
$$E_{\mb,\mb'}^{-k}\in  
q^{-A_{k}+\delta(\text{$m_{-k+2,k}$ is odd})}(1+q\A_0).$$
\bnum
\item
If $m_{-k+2,k}$ is odd, then $E_{\mb,\mb'}^{-k}\in q^{1-A_{k}}(1+q\A_0)$
and
\[
\ell'=\max\{A_j\ (j>k),A_k-1,A_j-2\ (j<k)\}.
\]
If $\mb'=\tE_{-k}(\mb)$, then $\ell=A_k$ and (a) is satisfied.
We have $A_{k}\le \ell,\ell'+1$ and hence
$\ord(E_{\mb,\mb'}^{-k})=1-A_k\ge-(\ell+\ell'-1)/2$.
Assume $\mb'\not=\tE_{-k}(\mb)$.
If $\ord(E_{\mb,\mb'}^{-k})=-(\ell+\ell'-1)/2$,
then
$A_{k}=\ell=\ell'+1$, and $n_e=k$. Hence $\mb'=\tE_{-k}(\mb)$,
which is a contradiction.

\item
If $m_{-k+2,k}$ is even, then $\mb'\not=\tE_{-k}(\mb)$,
$E_{\mb,\mb'}^{-k}\in q^{-A_{k}}(1+q\A_0)$, and
\[
\ell'=\max\{A_j\ (j>k), A_k+1,A_j-2\ (j<k)\}.
\]
We have $A_{k}\le \ell,\ell'-1$ and hence
$\ord(E_{\mb,\mb'}^{-k})=-A_k\ge-(\ell+\ell'-1)/2$.
Hence we obtain (b).
If $\ord(E_{\mb,\mb'}^{-k})=-(\ell+\ell'-1)/2$,
then $A_{k}=\ell=\ell'-1$.
Hence $n_e(\m)\ge k$ and $m_{-k+2,k}(\mb)$ is even.
Hence $\mb\not\in B''$.
\ee

\noindent
\textbf{Case 4.} $\mb'=\mb-\pbw{i,k}+\pbw{i,k-2}$
for $-k+2< i\le k-2$.

We have
\begin{eqnarray*}
E_{\mb,\mb'}^{-k}&=
&(1-q^2)[m_{i,k-2}+1]q^{1+\sum_{j>k}(m_{-k+2,j}-m_{-k,j})+
2m_{-k+2,k-2}-2m_{-k,k}+\sum_{-k+2<j\le i}(m_{j,k-2}-m_{j,k})} \\
&\in&q^{1-A_{i-2}}(1+q\A_0),
\end{eqnarray*}
and
\[
\ell'=\max\{A_j\ (j\ge k),\;A_j\ (j<i-2),\;A_{i-2}-1,
\; A_j-2\ (i\le j\le k-2)\}.
\]
If $\mb'=\tE_{-k}(\mb)$, then $\ell=A_{i-2}$ and
(a) holds.
Assume  $\mb'\not=\tE_{-k}(\mb)$.
Since $A_{i-2}\le\ell,\ell'+1$,
we have $\ord(E^{-k}_{\mb,\mb'})=1-A_{i-2}\ge -(\ell+\ell'-1)/2$. 
Hence we obtain (b).
If $\ord(E^{-k}_{\mb,\mb'})= -(\ell+\ell'-1)/2$, then
we have $A_{i-2}=\ell=\ell'+1$. Hence $\mb'=\tE_{-k}(\mb)$,
which is a contradiction.

\noindent
\textbf{Case 5.} $k\not=1$ and $\mb'=\mb-\pbw{k}$.
In this case,
\begin{eqnarray*}
E_{\mb,\mb'}^{-k}&=&
q^{\sum_{j>k}(m_{-k+2,j}-m_{-k,j})
-2m_{-k,k}+
1-m_{k,k}+2m_{-k+2,k-2}+\sum_{-k+2<i\le k-2}(m_{i,k-2}-m_{i,k})} \\
&\in&q^{1-A_{k-2}}(1+q\A_0),
\end{eqnarray*}
and
\[
\ell'=\max\{A_j\ (j\not=k-2), A_{k-2}-1\}.
\]
If $\mb'=\tE_{-k}(\mb)$, then $\ell=A_{k-2}$ and
(a) holds.
Assume  $\mb'\not=\tE_{-k}(\mb)$.
Since $A_{k-2}\le\ell,\ell'+1$,
we have $\ord(E^{-k}_{\mb,\mb'})=1-A_{k-2}\ge -(\ell+\ell'-1)/2$. 
Hence we obtain (b).
If $\ord(E^{-k}_{\mb,\mb'})= -(\ell+\ell'-1)/2$, then
we have $A_{k-2}=\ell=\ell'+1$. Hence $\mb'=\tE_{-k}(\mb)$,
which is a contradiction.
\QED

\begin{prop}
Let $k\in I_{>0}$.
Then the conditions in {\rm Corollary~\ref{cor:crcr}}
holds for $\tE_k$, $\tF_k$ and $\eps_k$,
with the same functions $c,e,f$.
\end{prop}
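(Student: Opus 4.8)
The plan is to deduce the statement from Corollary~\ref{cor:crcr} applied to the triple $(\tE_k,\tF_k,\eps_k)$ for $k\in I_{>0}$, using the very same auxiliary functions $c(n)=\vert n/2\vert$ and $f(n)=e(n)=n/2$ that were used in the $(-k)$ case. Conditions \eqref{eq:8}--\eqref{eq:14} involve only $c,e,f$ and have already been checked, so the content is to estimate the orders of the structure constants $F^{k}_{\mb,\mb'}$ and $E^{k}_{\mb,\mb'}$ obtained from Theorem~\ref{th:EF} and to match them against $\ell=\eps_k(\mb)$ and $\ell'=\eps_k(\mb')$, verifying \eqref{eq:4}--\eqref{eq:7}, \eqref{eq:43}, \eqref{eq:44}.

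First I would record the combinatorics of $\tE_k,\tF_k,\eps_k$. Since $k>0$, none of the segments $\pbw{k,j}$, $\pbw{k+2,j}$ $(j\ge k)$ is $\theta$-symmetric (a $\theta$-symmetric segment is of the form $\pbw{-j,j}$), and $e^{*}_{-k}\Pt(\mb)=0$ because no segment of a $\theta$-restricted multisegment ends at the negative integer $-k$; hence $E_k,F_k$ act on $\tP(\mb)$ by exactly the type-A formulas of Theorem~\ref{th:EF}, and $\tE_k,\tF_k,\eps_k$ are the type-A crystal operators of Definition~\ref{defKop}. Writing $A^{(k)}_j(\mb)=\sum_{j'\ge j}(m_{k,j'}-m_{k+2,j'+2})$ for $j\ge k$, one has $A^{(k)}_j(\mb)=m_{k,j}+\sum_{j'>j}(m_{k,j'}-m_{k+2,j'})$, $\eps_k(\mb)=\max_{j\ge k}A^{(k)}_j(\mb)$, and $\tF_k$ (resp.\ $\tE_k$) modifies the segment indexed by the smallest (resp.\ largest) maximizing~$j$.

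Next, for each term $\mb'=\mb-\delta_{\ell\ne k}\pbw{k+2,\ell}+\pbw{k,\ell}$ $(\ell\ge k)$ of $F_k\tP(\mb)$ I would compute, using $[m_{k,\ell}+1]\in q^{-m_{k,\ell}}(1+q\A_0)$, that the exponent $\sum_{\ell'>\ell}(m_{k+2,\ell'}-m_{k,\ell'})$ appearing in Theorem~\ref{th:EF} equals $-A^{(k)}_\ell(\mb)+m_{k,\ell}$, so that $\ord(F^{k}_{\mb,\mb'})=-A^{(k)}_\ell(\mb)$, and that passing from $\mb$ to $\mb'$ changes the $A$-values by $A'_j=A_j$ for $j>\ell$, $A'_\ell=A_\ell+1$, $A'_j=A_j+2$ for $j<\ell$; similarly each term $\mb'$ of $E_k\tP(\mb)$ satisfies $\ord(E^{k}_{\mb,\mb'})=1-A^{(k)}_{\ell_0}(\mb)$ for the relevant index $\ell_0$, with the dual change of $A$-values. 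The inequalities $A^{(k)}_\ell(\mb)\le\eps_k(\mb)$ and $A^{(k)}_\ell(\mb)\le\eps_k(\mb')-1$ (resp.\ $\le\eps_k(\mb')+1$ in the $E$ case) then give \eqref{eq:4}--\eqref{eq:7}; and equality $\ord(F^{k}_{\mb,\mb'})=-(\ell+\ell'-1)/2$ forces $A_\ell=\ell=\ell'-1$, whence by the change-of-$A$ formula $\ell$ is the smallest maximizer, i.e.\ $\mb'=\tF_k(\mb)$ — a contradiction. The dual argument disposes of $E$, giving \eqref{eq:43} and \eqref{eq:44}.

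I do not expect a genuine obstacle here: this computation is a special case of, and strictly simpler than, the two preceding propositions — it reproduces their ``Case 1'' together with the extremal cases of adding or removing a one-point segment $\pbw{k}$, and none of the even/odd dichotomy that complicated the segments $\pbw{-k+2,k}$ arises, since $k>0$. The only thing requiring care is the bookkeeping that rewrites the exponents of Theorem~\ref{th:EF} in terms of the $A^{(k)}_j(\mb)$ and then confirms, exactly as in the $(-k)$ case, that equality in each order estimate can hold only when $\mb'=\tF_k(\mb)$ or $\mb'=\tE_k(\mb)$.
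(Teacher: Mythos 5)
The paper omits this proof, noting only that it is ``similar to and simpler than'' the two preceding propositions, and your argument correctly supplies exactly what is intended: since $k>0$ excludes $\theta$-symmetric segments, Theorem~\ref{th:EF} reduces to the type-A formulas, from which $\ord(F^{k}_{\mb,\mb'})=-A^{(k)}_n(\mb)$ and $\ord(E^{k}_{\mb,\mb'})=1-A^{(k)}_n(\mb)$, and the Case~1 argument from the preceding propositions (together with the extremal $\pbw{k}$ term) gives \eqref{eq:4}--\eqref{eq:7}, \eqref{eq:43}, \eqref{eq:44}. Your reduction rewriting the exponent $\sum_{\ell'>\ell}(m_{k+2,\ell'}-m_{k,\ell'})=m_{k,\ell}-A^{(k)}_\ell(\mb)$ and the change-of-$A$ bookkeeping are correct, and no decomposition $B=B'\sqcup B''$ is needed, so Corollary~\ref{cor:crcr} applies directly.
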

Since the proof is similar to and simpler than the one of
the preceding two propositions, we omit the proof.

As a corollary we have the following result.
We write $\vac$ for the generator $\vac_0$ of $\Vt$ for short.

\begin{thm}\label{main:cr}
\bnum
\item The morphism
$$\tVt\seteq\Uf/\sum_{k\in I}\Uf(f_k-f_{-k})\to \Vt$$
is an isomorphism.
\item
$\{\Pt(\mb)\vac\}_{\mb\in\Mt}$ is a basis of
the $\K$-vector space $\Vt$.
\item
Set 
\eqn
&&\Lt\seteq\sum_{\ell\ge0,\; i_1,\ldots,i_\ell\in I}
\A_0\tF_{i_1}\cdots\tF_{i_\ell}\vac\subset\Vt,\\
&&\Bz=\set{\tF_{i_1}\cdots\tF_{i_\ell}\vac\mod q\Lt}%
{\ell\ge0, i_1,\ldots,i_\ell\in I}.
\eneqn
Then, $\Bz$ is a basis of $\Lt/q\Lt$ and
$(\Lt,\Bz)$ is a crystal basis of $\Vt$,
and the crystal structure coincides with the one of $\Mt$.
\item 
More precisely, we have
\be[{\rm(a)}]
\item
$\Lt=\soplus_{\mb\in\Mt}\A_0\Pt(\mb)\vac$,
\item
$\Bz=\set{\Pt(\mb)\vac\mod q\Lt}{\mb\in\Mt}$,
\item
for any $k\in I$ and $\mb\in\Mt$, we have
\be[{\rm (1)}]
\item
$\tF_k\Pt(\mb)\vac\equiv\Pt(\tF_k(\mb))\vac\mod q\Lt$,
\item
$\tE_k\Pt(\mb)\vac\equiv\Pt(\tE_k(\mb))\vac\mod q\Lt$, where
we understand $\Pt(0)=0$,
\item
$\tE_k^n\Pt(\mb)\vac\in q\Lt$ if and only if $n>\eps_k(\mb)$.
\ee
\ee
\enum
\end{thm}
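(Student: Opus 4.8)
The plan is to obtain Theorem~\ref{main:cr} from the criterion Theorem~\ref{th:crcr} (for negative indices) and its Corollary~\ref{cor:crcr} (for positive indices), applied to the crystal $B=\Mt$ of Theorem~\ref{th:Excr}, the generating system $C(\mb)=\tP(\mb)=P_\theta(\mb)\tvac$ \ro$\mb\in\Mt$\rf, and the functions $c(n)=\lvert n/2\rvert$, $e(n)=f(n)=n/2$: conditions \eqref{eq:8}--\eqref{eq:14} are immediate, and the three Propositions immediately preceding this theorem verify the remaining hypotheses \eqref{eq:4}--\eqref{eq:16} (and the corresponding ones of Corollary~\ref{cor:crcr}) for the coefficients $F^{-k}_{\mb,\mb'}$, $E^{-k}_{\mb,\mb'}$ and their positive-index analogues. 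For each $k\in I$ one regards $\tVt$ (hence also $\Vt$) as a $\K[e,f]$-module via $e=E_k$, $f=F_k$, $\al=\al_k+\al_{\theta(k)}$: the $\theta$-weight decomposition gives the required finiteness, and $E_kF_k=q^{-2}F_kE_k+1$ holds because no $i$ has $\theta(i)=i$. The output of the criterion will then be that $L\seteq\sum_{\mb\in\Mt}\A_0\tP(\mb)$ is stable under all $\tE_k,\tF_k$ and that $\tF_k\tP(\mb)\equiv\tP(\tF_k\mb)$, $\tE_k\tP(\mb)\equiv\tP(\tE_k\mb)\pmod{qL}$ for every $k\in I$ \ro with $\tP(0)\seteq0$\rf. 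Before invoking it I must know that $\{\tP(\mb)\}_{\mb\in\Mt}$ is a generating family, and for the rest I will need it to be a basis.

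\emph{Spanning.} Theorems~\ref{th:F-k}, \ref{th:E-k} and \ref{th:EF} express each $F_k\tP(\mb)$ and $E_k\tP(\mb)$ as a $\K$-combination of the $\tP(\mb')$ \ro$k\in I$\rf, and the $\tP(\mb)$ are $T_k$-eigenvectors, so $\sum_{\mb\in\Mt}\K\tP(\mb)$ is a $\Btg$-submodule of $\tVt$ containing the cyclic vector $\tvac=\tP(\emptyset)$, hence equals $\tVt$. Composing with $\tVt\epi\Vt$ shows $\{\Pt(\mb)\vac\}_{\mb\in\Mt}$ spans $\Vt$; in particular $\dim\tVt_\mu\le\#\{\mb\in\Mt:\wt_\theta(\tP(\mb))=\mu\}$ for each $\theta$-weight $\mu$.

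\emph{Linear independence.} Here I would use the non-degenerate symmetric bilinear form $(\scbul,\scbul)$ on $\Vt$ of Proposition~\ref{prop:Vtheta}: since $E_k$ and $F_k$ are mutually transpose and $E_k\vac=0$, the form is orthogonal for the $\theta$-weight decomposition, so it suffices to show that on each $\theta$-weight space the Gram matrix $\bl((\Pt(\mb)\vac,\Pt(\mb')\vac)\br)_{\mb,\mb'}$ is non-degenerate. Using the straightening relations of Proposition~\ref{pp1} and Lemma~\ref{lemq-1}, together with $\pbw{-j,j}^{\dv m}=\bl(\textstyle\prod_{\nu=1}^m(q^\nu+q^{-\nu})\br)^{-1}\pbw{-j,j}^{(m)}$ which makes $P_\theta(\mb)$ a non-zero scalar multiple of the Lusztig PBW vector $P(\mb)$, and the triangular behaviour of the $E_k$- and $F_k$-actions with respect to $\ltcr$ in Theorems~\ref{th:F-k}--\ref{th:EF}, this Gram matrix is $\ltcr$-triangular with non-zero diagonal entries. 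Hence $\{\Pt(\mb)\vac\}_{\mb\in\Mt}$ is $\K$-linearly independent; combined with the spanning step it is a basis, which is part~(ii), and then $\dim\Vt_\mu=\#\{\mb\in\Mt:\wt_\theta(\tP(\mb))=\mu\}$. Since $\tVt\epi\Vt$ forces $\dim\tVt_\mu\ge\dim\Vt_\mu$, the spanning estimate above now gives $\dim\tVt_\mu=\dim\Vt_\mu$, so the surjection $\tVt\to\Vt$ is an isomorphism, which is part~(i).

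\emph{Conclusion.} Now Theorem~\ref{th:crcr} and Corollary~\ref{cor:crcr} apply and yield the stability of $L=\sum_{\mb}\A_0\tP(\mb)$ and the two congruences displayed above. Since $\{\tP(\mb)\}$ is an $\A_0$-basis of $L$, its reduction is a $\Q$-basis of $L/qL$, giving (iv)(a),(b). By Lemma~\ref{lem:ht} every $\mb\in\Mt$ has the form $\tF_{i_1}\cdots\tF_{i_\ell}\emptyset$, so $\tP(\mb)\equiv\tF_{i_1}\cdots\tF_{i_\ell}\vac\pmod{qL}$ and conversely; hence $\Lt=L$ by Nakayama over $\A_0$ (in each weight), and $\Bz=\{\tP(\mb)\bmod q\Lt:\mb\in\Mt\}$ is a $\Q$-basis of $\Lt/q\Lt$. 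Thus $(\Lt,\Bz)$ is a crystal basis of $\Vt$, the two congruences identify its operators with $\tE_k,\tF_k$ on $\Mt$, which is (iii) and (iv)(c)(1)--(2), and iterating the $\tE_k$-congruence gives $\tE_k^n\tP(\mb)\equiv\tP(\tE_k^n\mb)\pmod{q\Lt}$, which lies in $q\Lt$ exactly when $\tE_k^n\mb=0$, i.e.\ when $n>\eps_k(\mb)$ by Theorem~\ref{th:Excr}, which is (iv)(c)(3). The main obstacle is the linear-independence step: establishing the $\ltcr$-triangularity of the Gram matrix of the $P_\theta(\mb)\vac$, where the straightening relations of Proposition~\ref{pp1} and Lemma~\ref{lemq-1} have to be combined carefully with the PBW theory of $\Uf[\gl_\infty]$; everything else is assembly of results already in place.
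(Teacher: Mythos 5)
Your broad strategy is the same as the paper's: verify the hypotheses of Theorem~\ref{th:crcr} and Corollary~\ref{cor:crcr} via the three preceding propositions, apply the criterion, then do a Nakayama argument to identify $\Lt$ with $\sum_{\mb\in\Mt}\A_0\Pt(\mb)\vac$. The spanning step is also the same as the paper's. But you diverge on the linear-independence step, and there you have a genuine gap.

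You write that before invoking the criterion you ``must know'' that $\{\tP(\mb)\}$ is a generating family, ``and for the rest I will need it to be a basis,'' and you then propose to prove linear independence by establishing $\ltcr$-triangularity of the Gram matrix $\bl((\Pt(\mb)\vac,\Pt(\mb')\vac)\br)$. This is an approach you explicitly leave as ``the main obstacle'': the triangularity of that Gram matrix is not obvious (the form on $\Vt$ is not the restriction of Lusztig's form on $\Uf$ --- it factors through the quotient $\Uf/\sum\Uf(f_k-f_{-k})$ via the $\pbw{\scbul,\scbul}$-construction in the proof of Proposition~\ref{prop:Vtheta} --- and nothing in Proposition~\ref{pp1} or Lemma~\ref{lemq-1} gives you directly an ordered orthogonality). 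More importantly, you have not noticed that you do not need the basis property at all to apply Theorem~\ref{th:crcr}: the criterion only requires a generating system, and the paper exploits precisely this to bootstrap. The paper applies the criterion to $\tL=\sum_{\mb}\A_0\tP(\mb)$, obtains the congruences $\tE_k\tP(\mb)\equiv\tP(\tE_k\mb)$, $\tF_k\tP(\mb)\equiv\tP(\tF_k\mb)\bmod q\tL$ first, pushes them down to $L_0=\sum_\mb\A_0\Pt(\mb)\vac$, and then proves linear independence of the $\Pt(\mb)\vac$ modulo $qL_0$ by induction on the $\theta$-weight: given a putative relation $\sum_{\mb\in S}a_\mb\Pt(\mb)\vac\equiv0$ with the $\mb$ of a fixed $\theta$-weight, either $S=\{\emptyset\}$ (impossible, since $\vac\notin qL_0$), or by Lemma~\ref{lem:ht} some $\mb_0\in S$ has $\eps_k(\mb_0)>0$, and applying $\tE_k$ (using the just-proved congruence and the injectivity of $\tE_k$ on $\{\mb:\tE_k\mb\neq0\}$ from Theorem~\ref{th:Excr}) yields a shorter relation at a strictly higher $\theta$-weight, forcing $a_{\mb_0}=0$. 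This is the key idea your proposal is missing; the Gram-matrix route you sketch would require substantial additional work that you have not supplied, whereas the paper's descent argument uses only what is already established and closes the proof cleanly. The remainder of your assembly --- (i) from the dimension count $\dim\tVt_\mu\le\#\Mt_\mu=\dim\Vt_\mu$, (iv)(a) via Nakayama, (iv)(c) from the congruences and Theorem~\ref{th:Excr} --- is correct and matches the paper.
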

\Proof
Let us recall that $\Pt(\mb)\vac\in \Vt$ is the image of $\tP(\mb)\in\tVt$.
By Theorem~\ref{th:F-k}, $\{\tP(\mb)\}_{\mb\in\Mt}$ generates
$\tVt$.
Let us set
$\tL=\sum_{\mb\in\Mt}\A_0\tP(\mb)\subset\tVt$.
Then Theorem~\ref{th:crcr} implies that
$\tF_k\tP(\mb)\equiv \tP(\tF_k(\mb)) \mod q\tL$ and
$\tE_k\tP(\mb)\equiv \tP(\tE_k(\mb)) \mod q\tL$.
Hence the similar results hold
for $L_0\seteq\sum_{\mb\in\Mt}\A_0\Pt(\mb)\vac\subset\Vt$
and $\Pt(\mb)\vac$.

\smallskip
Let us show that
\eqn
\text{(A)\quad $\{\Pt(\mb)\vac \mod qL_0\}_{\mb\in\Mt}$ is linearly independent
in $L_0/qL_0$,}
\eneqn
by the induction of the $\theta$-weight (see Remark \ref{rem:tweight}).
Assume that
we have a linear  relation
$\sum_{\mb\in S}a_{\mb}\Pt(\mb)\vac\equiv0 \mod qL_0$
for a finite subset $S$ 
and $a_{\mb}\in\Q\setminus\{0\}$.
We may assume that all $\mb$ in $S$ have the same $\theta$-weight.
Take $\mb_0\in S$.
If $\mb_0$ is the empty multisegment $\emptyset$,
then $S=\{\emptyset\}$ and $\Pt(\mb_0)\vac=\vac$ is non-zero, 
which is a contradiction.
Otherwise, there exists $k$ such that $\eps_k(\mb_0)>0$
by Lemma~\ref{lem:ht}.
Applying $\tE_k$, we have
$\sum_{\mb\in S}a_{\mb}\tE_k\Pt(\mb)\vac\equiv
\sum_{\mb\in S,\;\tE_k(\mb)\not=0}
a_{\mb}\Pt(\tE_k(\mb))\vac\equiv0 \mod qL_0$.
Since $\tE_k(\mb)$ ($\tE_k(\mb)\not=0$)
are mutually distinct,
we have $a_{\mb_0}=0$ by the induction hypothesis. It is a contradiction.

Thus we have proved (A). 
Hence $\{\Pt(\mb)\vac\}_{\mb\in\Mt}$ is a basis of  $\Vt$,
which implies that $\{\tP(\mb)\}_{\mb\in\Mt}$ is a basis of  $\tVt$.
Thus we obtain (i) and (ii).

Let us show (iv) (a).
Since
$\tF_{i_1}\cdots\tF_{i\ell}\vac\equiv
\Pt(\tF_{i_1}\cdots\tF_{i\ell}\emptyset)\vac \mod qL_0$,
we have $\Lt\subset L_0$ and $L_0\subset \Lt+q L_0$.
Hence Nakayama's lemma implies
$L_0=\Lt$.
The other statements are now obvious.
\QED

\section{Global basis of $\Vt$}
\subsection{Integral form of $\Vt$}
In this section, we shall prove that
$\Vt$ has a lower global basis.
In order to see this,
we shall first prove that
$\{\Pt(\mb)\vac\}_{\mb\in\Mt}$ is a basis of the $\A$-module
$\Vt_\A$.
Recall that $\A=\Q[q,q^{-1}]$, 
and $\Vt_\A=\Uf[\gl_\infty]_\A\vac$.

\Lemma
$\Vt_\A=\soplus_{\mb\in\Mt}\A \Pt(\mb)\vac$.
\enlemma
\Proof
It is clear that
$\soplus\nolimits_{\mb\in\Mt}\A \Pt(\mb)\vac$ is stable by
the actions of $F_k^{(n)}$ by Proposition \ref{prop:div}.
Hence we obtain $\Vt_\A\subset \soplus\nolimits_{\mb\in\Mt}\A \Pt(\mb)\vac$.

We shall prove $P_\theta(\mb)\vac\in\Uf[\gl_\infty]_\A\vac$. 
It is well-known that
 $\lr{i}{j}^{(m)}$ is contained in $\Uf[\gl_\infty]_\A$,
which is also seen by Proposition~\ref{prop:div} (3).
We divide $\mb$ as 
$\mb=\mb_1+\mb_2$, where
$\mb_1=\sum_{-j<i\le j}m_{ij}\lr{i}{j}$ and
$\mb_2=\sum_{k>0}m_{k}\pbw{-k,k}$.
Then $\Pt(\mb)=P(\mb_1)\Pt(\mb_2)$ and
$P(\mb_1)\in\Uf[\gl_\infty]_\A$.
Hence we may assume from the beginning
that $\mb=\sum_{0<k\le a}m_{k}\pbw{-k,k}$.
We shall show that
$\Pt(\mb)\vac\in\Vt_\A$ by the induction on $a$.

Assume $a>1$.
Set $\mb'=\sum_{0<k\le a-4}m_{k}\pbw{-k,k}$ and $v=\Pt(\mb')\vac$.
Then $\pbw{-a+2,a-2}^{\dv{m}}v\in\Vt_\A$ for any $m$ 
by the induction hypothesis.

We shall show that
$\pbw{-a,a}^{\dv{n}}\pbw{-a+2,a-2}^{\dv{m}}v$ is contained in $\Vt_\A$
by the induction on $n$.
Since $\Pt(\mb')$ commutes with
$\pbw{a}$, $\pbw{-a}$, $\pbw{-a+2,a-2}$, $\pbw{-a+2,a}$
and $\pbw{-a,a}$,
Proposition~\ref{prop:div} (2) implies
\eqn
&&\pbw{-a}^{(2n)}\pbw{-a+2,a-2}^{\dv{n+m}}v \\
&&=\sum_{i+j+2t=2n,\;j+t=u}q^{2(n+m)i+j(j-1)/2-i(t+u)}
\pbw{a}^{(i)}\pbw{-a+2,a}^{(j)}\lr{-a}{a}^{\dv{t}}
\lr{-a+2}{-2}^{\dv{n+m-u}}v,
\end{eqnarray*}
which is contained in $\Vt_\A$.
Since $\pbw{a}^{(i)}\lr{-a+2}{a}^{(j)}\lr{-a}{a}^{\dv{t}}
\lr{-a+2}{a-2}^{\dv{n+m-u}}v$
is contained in $\Vt_\A$ if $(i,j,t,u)\not=(0,0,n,n)$
by the induction hypothesis on $n$,
$\pbw{-a,a}^{\dv{n}}\pbw{-a+2,a-2}^{\dv{m}}v$ is contained in $\Vt_\A$.

If $a=1$, we similarly prove $\Pt(\mb)\vac\in\Vt_\A$ using
Proposition~\ref{prop:div} (1) instead of (2).
\QED

\subsection{Conjugate of the PBW basis}
We will prove that the bar involution is upper triangular
with respect to the PBW basis $\{P_\theta(\mb)\}_{\mb\in\Mt}$.

First we shall prove Theorem \ref{thmgl} (4).

For $a,b\in\M$ such that $a\le b$, we denote by
$\M_{[a,b]}$ (resp.\ $\M_{\le b}$)
the set of $\mb\in\M$ of the form
$\mb=\sum_{a\le i\le j\le b}m_{i,j}\pbw{i,j}$
(resp.\ $\mb=\sum_{i\le j\le b}m_{i,j}\pbw{i,j}$).
Similarly we define $(\Mt)_{\le b}$.
For a multisegment $\mb\in\M_{\le b}$,
we divide $\mb$ into $\mb=\mb_b+\mb_{<b}$, 
where $\mb_b=\sum_{i \le b}m_{i,j}\lr{i}{b}$ and 
$\mb_{<b}=\sum_{i \le j<b}m_{i,j}\lr{i}{j}$.

\Lemma
For $n\ge0$ and $a,b\in I$ such that $a\le b$, we have
$$\ol{\pbw{a,b}^{(n)}}\in
\pbw{a,b}^{(n)}+\sum_{\mb\ltcr n\pbw{a,b}}\K P(\mb).$$
\enlemma
\Proof
We shall first show
\eq
\ol{\pbw{a,b}}\in \pbw{a,b}+\sum_{a+2\le k\le b}\pbw{k,b}\Uf
\label{eq:barab}
\eneq
 by the induction on $b-a$. 
If $a=b$, it is trivial. If $a<b$, we have
\begin{eqnarray*}
\overline{\lr{a}{b}}&=&
\langle{a}\rangle \overline{\lr{a+2}{b}}
-q^{-1}\overline{\lr{a+2}{b}}\langle{a}\rangle \\
&\in&\langle{a}\rangle
\Bigl(\lr{a+2}{b}+\sum_{a+2<k\le b}\pbw{k,b}\Uf\Bigr) 
-q^{-1}\Bigl(\lr{a+2}{b}+\sum_{a+2<k\le b}\pbw{k,b}\Uf\Bigr)
\pbw{a} \\
&\subset&\lr{a}{b}+(q-q^{-1})\lr{a+2}{b}\langle{a}\rangle 
+\sum_{a+2<k\le b}
(\pbw{k,b}\pbw{a}\Uf+\pbw{k,b}\Uf).
\end{eqnarray*}
Hence we obtain \eqref{eq:barab}.
We shall show the lemma by the induction on $n$.
We may assume $n>0$ and
$$\ol{\pbw{a,b}^{n-1}}\in
\pbw{a,b}^{n-1}+\sum_{\mb\ltcr (n-1)\pbw{a,b}}\K P(\mb).$$
Hence
we have
$$\ol{\pbw{a,b}^{n}}
=\ol{\pbw{a,b}}\;\ol{\pbw{a,b}^{n-1}}\in
\pbw{a,b}^n+\sum_{a<k\le b}\pbw{k,b}\Uf+
\sum_{\mb\ltcr (n-1)\pbw{a,b}}
\K \pbw{a,b}P(\mb).$$
For $a<k\le b$ and $\mb\in\M$ such that
$\wt(\mb)=\wt(n\pbw{a,b})-\wt(\pbw{k,b})$,
we have $\mb\in\M_{[a,b]}$ and
$\mb_b=\sum_{a\le i\le b}m_{i,b}\pbw{i,b}$
with $\sum_im_{i,b}=n-1$.
In particular,
$m_{a,b}\le n-1$.
Hence $\pbw{k,b}P(\mb)
\in \K P(\mb+\pbw{k,b})$ and $\mb+\pbw{k,b}\ltcr n\pbw{a,b}$.

If $\mb\ltcr (n-1)\pbw{a,b}$, 
then $\pbw{a,b}P(\mb)\in \K P(\pbw{a,b}+\mb)$
and $\pbw{a,b}+\mb\ltcr n\pbw{a,b}$.
\QED

\Prop\label{prop:ut}
For $\mb\in\M$,
$$\ol{P(\mb)}\in P(\mb)+\sum_{\mb[n]\ltcr\mb}\K P(\mb[n]).$$
\enprop
\Proof
Put $\mb=\sum_{i \le j \le b}m_{i,j}\lr{i}{j}$ and divide 
$\mb=\mb_b+\mb_{<b}$. 
We prove the claim by the induction on $b$ and the number 
of segments in $\mb_{b}$. 
Suppose $\mb_b=m\lr{a}{b}+\mb_1$ with $m=m_{a,b}>0$, 
where $\mb_1=\sum_{a<i \le b}m_{i,b}\lr{i}{b}$.

\noindent
(i)\quad Let us first show that
\eq
&&\ol{P(\mb_b)}\in P(\mb_b)+\sum_{\mb'\ltcr \mb_b}\K P(\mb').
\label{eq:mbb}
\eneq
We have
$
\ol{P(\mb_b)}
=\ol{P(\mb_1)}\cdot \overline{\lr{a}{b}^{(m)}}$.
Since
$\ol{P(\mb_1)}\in P(\mb_1)+\sum_{\mb'_1\ltcr \mb_1}\K P(\mb'_1)$
by the induction hypothesis, and
$\overline{\lr{a}{b}^{(m)}}
\in \lr{a}{b}^{(m)}+\sum_{\mb''\ltcr m\pbw{a,b}}\K P(\mb'')$,
we have
$$\ol{P(\mb_b)}\in
P(\mb_b)+\sum_{\mb'_1\ltcr \mb_1,\;\mb'_1\in\M_{[a+2,b]}}
\K P(\mb'_1)\pbw{a,b}^{(m)}
+\sum_{\mb'_1\lecr \mb_1,\;\mb''\ltcr m\pbw{a,b}}
\K P(\mb'_1) P(\mb'').
$$
If $\mb'_1\ltcr \mb_1$ and $\mb'_1\in\M_{[a+2,b]}$,
then $P((\mb'_1)_{<b})$ and $\pbw{a,b}^{(m)}$ commute. Hence
$P(\mb'_1)\pbw{a,b}^{(m)}=P(\mb'_1+m\pbw{a,b})$
and $\mb'_1+m\pbw{a,b}\ltcr \mb_b$.

If $\mb'_1\lecr \mb_1$, $\mb'_1\in\M_{[a+2,b]}$ and 
$\mb''\ltcr m\pbw{a,b}$, then we can write
$\mb''_b=j\pbw{a,b}+\mb_2$ with $j<m$
and $\mb_2\in\M_{[a+2,b]}$.
Hence
we have
$$P(\mb'_1)P(\mb'')\in
\K P((\mb'_1)_b)P(j\pbw{a,b})P((\mb'_1)_{<b})P(\mb_2)P(\mb''_{<b}).$$
Since $(\mb'_1)_{<b}$, $\mb_2\in \M_{[a+2,b]}$
we have
$P((\mb'_1)_{<b})P(\mb_2)P(\mb''_{<b})
\in \sum_{\mb[n]_b\in\M_{[a+2,b]}}\K P(\mb[n])$.
Hence we have
$P(\mb'_1)P(\mb'')\in
\sum_{\mb[n]_b\in\M_{[a+2,b]}}\K P((\mb'_1)_b+j\pbw{a,b}+\mb[n])
$ and $(\mb'_1)_b+j\pbw{a,b}+\mb[n]\ltcr\mb_b$.
Hence we obtain \eqref{eq:mbb}.

\smallskip
\noindent
(ii)\quad By the induction hypothesis,
$\ol{P(\mb_{<b})}\in P(\mb_{<b})+\sum_{\mb''\ltcr\mb_{<b}}\K P(\mb'')$.
Since 
$\ol{P(\mb)}=\ol{P(\mb_b)}\;\ol{P(\mb_{<b})}$, 
\eqref{eq:mbb} implies that
$$\ol{P(\mb)}\in P(\mb)+
\sum_{\mb'\ltcr \mb_b,\mb''\in\M_{<b}}\K P(\mb')P(\mb'')
+\sum_{\mb''\ltcr\mb_{<b}}\K P(\mb_{b})P(\mb'').$$
For $\mb'\ltcr \mb_b$ and $\mb''\in\M_{<b}$, we have
$$P(\mb')P(\mb'')=P(\mb'_b)P(\mb'_{<b})P(\mb'')
\in \sum_{\mb[n]\in\M_{\le b},\,\mb[n]_b=\mb'_b}\K P(\mb[n])
\subset \sum_{\mb[n]\ltcr\mb}\K P(\mb[n]).$$
For $\mb''\ltcr\mb_{<b}$, we have
$P(\mb_{b})P(\mb'')=P(\mb_{b}+\mb'')$ and
$\mb_{b}+\mb''\ltcr\mb$.
Thus we obtain the desired result.
\QED

\begin{prop}\label{barpp}
For $\mb \in \Mt$, we have
\begin{eqnarray*}
\overline{P_\theta(\mb)}\vac\in P_{\theta}(\mb)\vac+
\sum_{\mb' \in \Mt,\mb'\ltcr\mb}\K P_{\theta}(\mb')\vac.
\end{eqnarray*}
\end{prop}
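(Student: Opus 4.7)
The plan is to derive Proposition~\ref{barpp} from its $\Uf[\gl_\infty]$-counterpart, Proposition~\ref{prop:ut}. First, for $\mb\in\Mt$, I note that $\Pt(\mb)=c_\mb\,P(\mb)$, where
\[
c_\mb\seteq\prod_{k>0}\dfrac{[m_{-k,k}]!}{\prod_{\nu=1}^{m_{-k,k}}[2\nu]}\in\A
\]
is bar-invariant. Moreover, via the isomorphism $\tVt\isoto\Vt$ of Theorem~\ref{main:cr}, the bar on $\Vt$ is induced from the bar on $\Uf[\gl_\infty]$, because the defining ideal $\ssum_{k}\Uf[\gl_\infty](f_k-f_{-k})$ is bar-stable (each $f_k-f_{-k}$ is bar-invariant). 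Combining these facts with Proposition~\ref{prop:ut} yields
\[
\overline{\Pt(\mb)}\vac
=c_\mb\,\overline{P(\mb)}\vac
=\Pt(\mb)\vac+c_\mb\ssum_{\mb'\in\M,\;\mb'\ltcr\mb}a_{\mb'}\,P(\mb')\vac,\qquad a_{\mb'}\in\K,
\]
where the sum ranges over all $\mb'\in\M$, not only those in $\Mt$.

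The remaining step is to show that for every $\mb'\in\M$ with $\mb'\ltcr\mb$, the element $P(\mb')\vac\in\Vt$ expands in the $\theta$-restricted PBW basis as $\ssum_{\mb''\in\Mt,\,\mb''\ltcr\mb}\K\,\Pt(\mb'')\vac$. My strategy uses the witness $j_0$ from the definition of $\mb'\ltcr\mb$: since $\mb'_{>j_0}=\mb_{>j_0}$, any segment $\pbw{i,j}$ of $\mb'$ that is not $\theta$-restricted (i.e.~$i<-j$) must have endpoint $\le j_0$. The PBW order lets me factor $P(\mb')=P(\mb_{>j_0})\cdot P(\mb'_{\le j_0})$, so it suffices to rewrite $P(\mb'_{\le j_0})\vac$ as a $\K$-combination of $\Pt(\mb''')\vac$ with $\mb'''\in\Mt$ whose segments all have endpoint $\le j_0$; then $\mb''\seteq\mb_{>j_0}+\mb'''$ lies in $\Mt$ with $\mb''_{>j_0}=\mb_{>j_0}$, and together with $\mb''\ne\mb$ this forces $\mb''\ltcr\mb$ via the witness $j_0$.

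The reduction of $P(\mb'_{\le j_0})\vac$ to $\theta$-restricted form proceeds by induction on the number of non-$\theta$-restricted segments in $\mb'_{\le j_0}$: move the rightmost such segment next to $\vac$ using the commutation relations of Proposition~\ref{pp1}, apply Lemma~\ref{lemq-1} ($\pbw{-j,-i}\vac=\pbw{i,j}\vac$ for $1\le i\le j$, or $\pbw{-j,i}\vac=q^{-1}\pbw{-i,j}\vac$ for $1\le i<j$) to exchange it for its $\theta$-restricted counterpart, and re-arrange the result into PBW order. Each step preserves the endpoint bound $\le j_0$ and strictly decreases the count of bad segments, so the procedure terminates.

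The main obstacle is the delicate combinatorial bookkeeping in this reduction: the commutation \eqref{sub5} of Proposition~\ref{pp1} spawns additional lower-PBW terms on each commutation, and one must run a secondary induction \ro on a lexicographic measure such as the pair (number of bad segments, largest left endpoint among bad segments)\rf\ to verify that every resulting $\mb''\in\Mt$ has all segments of endpoint $\le j_0$. The key structural fact enabling the closure is that $\mb\in\Mt$ constrains the $\theta$-weight of $\Pt(\mb)\vac$, so that any $\mb''\in\Mt$ contributing to the expansion must have the same $\theta$-weight, which together with $\mb''_{>j_0}=\mb_{>j_0}$ leaves only multisegments $\mb''\ltcr\mb$ available.
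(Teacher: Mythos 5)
Your opening steps coincide with the paper's: both pass from $\Pt(\mb)$ to $P(\mb)$ by a bar-invariant scalar, descend the bar involution of $\Uf$ to $\Vt$ through $\tVt$, and invoke Proposition~\ref{prop:ut}, so that everything reduces to the paper's \eqref{bareq1}, namely $P(\mb')\vac\in\ssum_{\mb''\in\Mt,\;\mb''\ltcr\mb}\K\Pt(\mb'')\vac$ for $\mb'\in\M$ with $\mb'\ltcr\mb$ and $\wt(\mb')=\wt(\mb)$. The divergence is in how this is established. The paper never rewrites $P(\mb')\vac$ by explicit commutations: it uses the already-proved basis property of $\{\Pt(\mb'')\vac\}$ (Theorem~\ref{main:cr}) together with the weight observation \eqref{eq:MMt} to know a priori which $\mb''$ can occur, and then inducts on the maximal endpoint $b$ of $\mb$, splitting into the cases $\mb'_b=\mb_b$ and $\mb'_b\ltcr\mb_b$. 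Your explicit reduction via Proposition~\ref{pp1} and Lemma~\ref{lemq-1} is left unexecuted (you flag the ``delicate combinatorial bookkeeping'' as an obstacle); note also that what must be bounded by $j_0$ after applying Lemma~\ref{lemq-1} is the \emph{absolute value of the left endpoint} of a bad segment (the conversion sends $\pbw{i,j}$ to a segment ending at $|i|$), which requires knowing $\mb'_{\le j_0}\in\M_{[-j_0,j_0]}$; this does follow from $\wt(\mb'_{\le j_0})=\wt(\mb_{\le j_0})$, but you only argue about right endpoints.

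The decisive gap is the final implication: from ``$\mb''_{>j_0}=\mb_{>j_0}$, same $\theta$-weight, $\mb''\ne\mb$'' you conclude $\mb''\ltcr\mb$ ``via the witness $j_0$.'' This is not valid. The ordering $\ltcr$ requires that at the crystal-largest position where $\mb''$ and $\mb$ differ the multiplicity of $\mb''$ be \emph{smaller}; agreement above $j_0$ plus equality of $\theta$-weights does not prevent $\mb''$ from having a strictly larger multiplicity at a crystal-large position with endpoint $j_0$. Indeed the rewriting can create segments ending at $j_0$ that $\mb'$ did not have (Lemma~\ref{lemq-1}~(ii) turns $\pbw{-j_0,i}$ into $\pbw{-i,j_0}$, and Proposition~\ref{pp1}~(2) can merge such pieces into $\pbw{-j_0,j_0}$), so $m_{-j_0,j_0}(\mb'')$ may exceed $m_{-j_0,j_0}(\mb')$. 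Controlling exactly this is the heart of the paper's proof: when $s\seteq m_{-b,b}(\mb)-m_{-b,b}(\mb')>0$, the $\theta$-weight identity $s=2m_{-b,b}(\mb'')+\ssum_{-b<i\le b}m_{i,b}(\mb'')$ for the correction terms bounds the recovered multiplicity at $(-b,b)$ by $s/2<s$, so the net multiplicity there still strictly drops and $(-b,b)$ — not the original witness $(i_0,j_0)$ — serves as the witness for $\mb''\ltcr\mb$. Your proposal contains no analogue of this counting, so the argument is incomplete at its decisive point.
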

\begin{proof}
First note that
\eq
&&P(\mb)\vac\in\sum_{\mb[n]\in(\Mt)_{\le b}}\K \Pt(\mb[n])\vac
\quad\text{for any $b\in I_{>0}$ and $\mb\in\M_{[-b,b]}$,}
\label{eq:MMt}
\eneq
by the weight consideration.

For $\mb \in \mathcal{M}_\theta$, $P_\theta(\mb)$ and $P(\mb)$ are equal
up to a multiple of bar-invariant scalar. Thus we have
\[
\overline{P_\theta(\mb)}\in P_\theta(\mb)+\sum_{\mb' \in\M,\;\mb'\ltcr\mb}
\K P(\mb')
\]
by Proposition~\ref{prop:ut}. Hence it is enough to show that
\begin{eqnarray}
P(\mb')\vac\in
\sum_{\mb[n]\in\Mt,\;\mb[n]\ltcr\mb}\K P_\theta(\mb[n])\vac \label{bareq1}
\end{eqnarray}
for $\mb'\in\M$ such that
$\mb'\ltcr\mb$ and $\wt(\mb')=\wt(\mb)$.
Put $\mb=\sum_{i \le j \le b}m_{i,j}\pbw{i,j}$ and write $\mb=\mb_b+\mb_{<b}$.
We prove (\ref{bareq1}) by the induction on $b$. 
By the assumption on $\mb'$, we have $\mb'\in\M_{[-b,b]}$ and
$\mb'_b\lecr\mb_b$. 
Thus $\mb'_b \in \Mt$. 
Hence $\K P(\mb')\vac=\K P_\theta(\mb'_b)P(\mb'_{<b})\vac$. 

If $\mb'_b=\mb_b$, then $\mb'_{<b}<_{\text{cry}}\mb_{<b}$,
and the induction hypothesis implies
$P(\mb'_{<b})\vac\in
\sum_{\mb[n]\in\Mt,\;\mb[n]\ltcr\mb_{<b}}
\K P_\theta(\mb[n])\vac$.
Since $\Pt(\mb'_{b})\Pt(\mb[n])=\Pt(\mb'_{b}+\mb[n])$
and $\mb'_{b}+\mb[n]\ltcr\mb$, we obtain  \eqref{bareq1}.

If $\mb'_b\ltcr\mb_b$, write $\mb'=\sum_{-b\le i\le j\le b}m'_{i,j}\pbw{i,j}$.
Set $s=m_{-b.b}-m'_{-b,b}\ge0$.
Since $\wt(\mb')=\wt(\mb)$, we have
$\sum_{j<b}m'_{-b,j}=s$.
If $s=0$, 
then $\mb'_{<b}\in\M_{[-b+2,b-2]}$, and 
$P(\mb'_{<b})\vac\in\sum_{\mb[n]\in(\Mt)_{<b}}\K\Pt(\mb[n])\vac$
by \eqref{eq:MMt}.
Then \eqref{bareq1} follows from $\mb'_{b}+\mb[n]\ltcr\mb$. 

Assume $s>0$. Since $\mb'_{<b}\in\M_{[-b,b]}$,
we have
$P(\mb'_{<b})\vac\in\sum_{\mb[n]\in(\Mt)_{\le b}}\K\Pt(\mb[n])\vac$
by  \eqref{eq:MMt}.
We may assume $(1+\theta)\wt(\mb'_{<b})=(1+\theta)\wt(\mb[n])$
(see Remark~\ref{rem:tweight}).
Hence, we have $s=2m_{-b,b}(\mb[n])+\ssum_{-b<i\le b}m_{i,b}(\mb[n])$.
In particular, $m_{-b,b}(\mb[n])\le s/2$.
We have $\mb'_b+\mb[n]\in\Mt$ and
$\Pt(\mb'_b)\Pt(\mb[n])\vac
=\Pt(\mb'_b+\mb[n])\vac$.
Since $m_{-b,b}(\mb'_b+\mb[n])\le (m_{-b,b}-s)+s/2<m_{-b,b}$,
we have $\mb'_b+\mb[n]\ltcr\mb$. Hence we obtain \eqref{bareq1}.
\QED

\subsection{Existence of a global basis}
As a consequence of the preceding subsections, we obtain the following theorem.

\begin{thm}\label{th:gl}
\bnum
\item $(\Lt,\Lt^-,\Vt_\A)$ is balanced.
\item For any $\mb\in\Mt$, there exists a unique $\Gth^\lw(\mb)
\in \Lt\cap\Vt_\A$
such that
$\ol{\Gth^\lw(\mb)}=\Gth^\lw(\mb)$ and $\Gth^\lw(\mb)\equiv \Pt(\mb)\vac\;\mod q\Lt$.
\item
$\Gth^\lw(\mb)\in \Pt(\mb)\vac+\sum_{\mb[n]\ltcr\mb}q\Q[q]\Pt(\mb[n])\vac$
for any $\mb\in\Mt$.
\item
$\{\Gth^\lw(\mb)\}_{\mb\in\Mt}$ is a basis of
the $\A$-module $\Vt_\A$, the $\A_0$-module $\Lt$
and the $\K$-vector space $\Vt$.
\end{enumerate}
\end{thm}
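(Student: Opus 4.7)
The plan is to combine three results already established---Theorem~\ref{main:cr}(iv)(a), giving $\Lt=\soplus_{\mb\in\Mt}\A_0\Pt(\mb)\vac$; the preceding lemma, giving $\Vt_\A=\soplus_{\mb\in\Mt}\A\,\Pt(\mb)\vac$; and Proposition~\ref{barpp}, giving the upper-triangularity of the bar involution on the PBW basis---with the standard Lusztig--Kashiwara procedure that manufactures a global basis from a unitriangular bar involution on an integral form.

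The first step I would take is to sharpen Proposition~\ref{barpp} from a $\K$-statement to an $\A$-statement. Since bar fixes each $F_i$ and sends $q\mapsto q^{-1}$, it preserves $\Uf[\gl_\infty]_\A$; together with $\ol{\vac}=\vac$, this shows that bar stabilises $\Vt_\A$. Because $\{\Pt(\mb)\vac\}_{\mb\in\Mt}$ is an $\A$-basis of $\Vt_\A$, the coefficients in
$$\ol{\Pt(\mb)\vac}=\Pt(\mb)\vac+\sum_{\mb'\ltcr\mb}a_{\mb',\mb}\,\Pt(\mb')\vac$$
must in fact lie in $\A$, and the identity $\ol{\,\ol{\Pt(\mb)\vac}\,}=\Pt(\mb)\vac$ translates into the standard system of relations on the $a_{\mb',\mb}$'s.

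Next I would run Lusztig's inductive construction. Within any fixed $\theta$-weight there are only finitely many multisegments, so $\ltcr$ is well-founded inside each weight space, and one can induct on it. At each step, bar-invariance of a candidate
$$\Gth^\lw(\mb)=\Pt(\mb)\vac+\sum_{\mb'\ltcr\mb}c_{\mb',\mb}\,\Pt(\mb')\vac$$
forces, for each $\mb'$ in turn, an equation $c_{\mb',\mb}-\ol{c_{\mb',\mb}}=P_{\mb',\mb}$, where $P_{\mb',\mb}\in\A$ is manufactured from the already-constructed $c_{\mb'',\mb}$ with $\mb'\ltcr\mb''\ltcr\mb$ and from the $a_{\mb'',\mb}$'s, and automatically satisfies $\ol{P_{\mb',\mb}}=-P_{\mb',\mb}$; such an equation admits a unique solution $c_{\mb',\mb}\in q\Q[q]$. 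This yields (ii) and (iii) at once.

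The remaining assertions are now formal. Since $c_{\mb',\mb}\in q\Q[q]\subset\A\cap\A_0$, we have $\Gth^\lw(\mb)\in\Vt_\A\cap\Lt$, and bar-invariance together with $\ol{\A_0}=\A_\infty$ yields $\Gth^\lw(\mb)\in\Lt^-$. The transition matrix from $\{\Pt(\mb)\vac\}$ to $\{\Gth^\lw(\mb)\}$ is unitriangular with entries in $q\Q[q]\subset\A\cap\A_0$, so $\{\Gth^\lw(\mb)\}_{\mb\in\Mt}$ is simultaneously an $\A$-basis of $\Vt_\A$, an $\A_0$-basis of $\Lt$, and a $\K$-basis of $\Vt$, proving (iv). This identifies $\Lt\cap\Lt^-\cap\Vt_\A$ with $\soplus_{\mb\in\Mt}\Q\,\Gth^\lw(\mb)$, which maps isomorphically onto $\Lt/q\Lt$, proving (i). The only nontrivial point beyond the previously established results is the promotion of bar coefficients from $\K$ to $\A$ in the first step; after that, the construction is verbatim the argument carried out in \cite{K} for $\Uf$.
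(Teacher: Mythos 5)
Your proof is correct and follows essentially the same route as the paper's: promote Proposition~\ref{barpp} to a unitriangular bar involution with entries in $\A$ (using that bar stabilises $\Vt_\A$ and that the PBW vectors form an $\A$-basis of $\Vt_\A$), then run the standard Lusztig triangularisation to produce the unique bar-invariant $\Gth^\lw(\mb)\equiv\Pt(\mb)\vac\bmod q\Lt$ with correction terms in $q\Q[q]$, and read off (i), (iii), (iv) from the unitriangularity of the change-of-basis matrices. The paper compresses the middle step into ``it is well-known that there is a matrix $A$ with $\ol{A}C=A$\dots''; you spell out the induction on $\ltcr$ within each $\theta$-weight space, but the substance is identical.
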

\Proof
We have already seen that
$\ol{\Pt(\mb)\vac}=\sum_{\mb'\lecr\mb}c_{\mb,\mb'}\Pt(\mb')\vac$
for $c_{\mb,\mb'}\in \A$ with $c_{\mb,\mb}=1$.
Let us denote by $C$ the matrix $(c_{\mb,\mb'})_{\mb,\mb'\in\Mt}$.
Then $\ol{C}C=\id$ and it is well-known that there is a matrix
$A=(a_{\mb,\mb'})_{\mb,\mb'\in\Mt}$ 
such that
$\ol{A}C=A$,
$a_{\mb,\mb'}=0$ unless $\mb'\lecr\mb$,
$a_{\mb,\mb}=1$ and $a_{\mb,\mb'}\in q\Q[q]$ for $\mb'\ltcr\mb$.
Set $\Gth^\lw(\mb)=\sum_{\mb'\lecr\mb}a_{\mb,\mb'}\Pt(\mb')\vac$.
Then we have
$\ol{\Gth^\lw(\mb)}=\Gth^\lw(\mb)$ and $\Gth^\lw(\mb)\equiv \Pt(\mb)\vac\;
\mod q\Lt$.
Since $\Gth^\lw(\mb)$ is a basis of $\Vt_\A$, we obtain the desired results.
\QED

\bigskip
\noindent
{\em Errata to ``Symmetric crystals and affine Hecke algebras of type B,
Proc. Japan Acad., 82, no. 8, 2006, 131--136''}
:
\bnum
\item
In Conjecture 3.8, $\lambda=\Lambda_{p_0}+\Lambda_{p_0^{-1}}$ should be 
read as 
$\lambda=\ssum_{a\in A}\Lambda_{a}$, where
$A=I\cap\{p_0,p_0^{-1},-p_0,-p_0^{-1}\}$.
We thank S. Ariki who informed us that
the original conjecture is false.
\item
In the two diagrams of $B_\theta(\la)$ at the end of \S\,2,
$\lambda$ should be $0$.
\item
Throughout the paper, $A^{(1)}_\ell$ should be read as $A^{(1)}_{\ell-1}$.
\enum


\begin{thebibliography}{ABCD}

\bibitem[A]{A}
Susumu Ariki,
\newblock {\it On the decomposition numbers of 
the Hecke algebra of $G(m,1,n)$},
\newblock   J. Math. Kyoto Univ. {\bf 36}  (1996),  no. 4, 789--808. 

\bibitem[EK]{EK}
Naoya Enomoto and Masaki Kashiwara,
\newblock {\it Symmetric crystals and affine Hecke algebras of type B},
\newblock Proc. Japan Acad., {\bf 82}, no. 8, 2006, 131--136.

\bibitem[G]{Groj}
Ian Grojnowski,
\newblock{\it Affine $\mathfrak{sl}_p$ controls the representation theory 
of the symmetric group and related Hecke algebras},
arXive:math.RT/9907129.

\bibitem[K]{K}
Masaki Kashiwara,
\newblock {\it On crystal bases of the $q$-analogue of
universal enveloping algebras},
\newblock Duke Math. J. {\bf 63} (1991) 465--516.

\bibitem[KM]{KM}
Masaki Kashiwara and Vanessa Miemietz,
\newblock {\it Crystals and
affine Hecke algebras of type D},
\newblock arXive: math.QA/0703281.


\bibitem[LLT]{LLT}
Alain Lascoux, Bernard Leclerc and Jean-Yves Thibon, 
\newblock {\it Hecke algebras at roots of unity 
and crystal bases of quantum affine algebras},
\newblock Comm. Math. Phys.  {\bf 181}  (1996),  no. 1, 205--263.


\bibitem[L]{L}
George Lusztig,
\newblock{\it 
Canonical bases arising from quantized enveloping algebras},
\newblock  J. Amer. Math. Soc.  {\bf 3}  (1990),  no. 2, 447--498.


\bibitem[M]{Mie}
Vanessa Miemietz,
\newblock{\it On representations of affine Hecke algebras of type B},
Ph. D. thesis, Universit\"at Stuttgart (2005),
to appear in Algebras and Representation Theory.

\end{thebibliography}
\end{document}